 \def\a{\alpha}
 \def\be{\beta}
 \def\de{\delta}
 \def\e{\varepsilon}
 \def\ga{\gamma}
 \def\tga{{\tilde{\gamma}}}
 \def\la{\lambda}
 \def\si{\sigma}
 \def\th{\theta}
 \def\om{\omega}
 \def\re{{\mathbb R}}
 \def\na{{\mathbb N}}
 \def\then{\Longrightarrow}
 \def\ov{\overline}
 \def\Z{{\mathbb Z}}
 \def\A{{\mathbb A}}
 \def\D{{\mathbb D}}
 \def\E{{\mathbb E}}
 \def\cE{{\mathcal E}}
 \def\F{{\mathbb F}}
 \def\tf{{\tilde f}}
 \def\cG{{\mathcal G}}
 \def\cL{{\mathcal L}}
 \def\cM{{\mathcal M}}
 \def\cO{{\mathcal O}}
 \def\P{{\mathbb P}}
 \def\cP{{\mathcal P}}
 \def\SS{{\mathbb S}}
 \def\T{{\mathbb T}}
 \def\cU{{\mathcal U}}
 \def\tf{{\tilde{f}}}
 \def\tnu{{\tilde{\nu}}}
 \def\cW{{\mathcal W}}
 \def\tq{{\tilde{q}}}
 \def\tq1{{\tilde{q}_1}}
 \def\X{{\mathbb X}}
 \def\Y{{\mathbb Y}}
 \def\ty{{\tilde{y}}}
 \definecolor{dgreen}{rgb}{0,0.3,0}
 \definecolor{dred}{rgb}{0.8,0,0}
 \def \lv{\left\vert}
 \def \rv{\right\vert}
 \def \lV{\left\Vert}
 \def \rV{\right\Vert}
 \def \ov{\overline}
 \def \then{\Longrightarrow}
 \DeclareMathOperator{\supp}{supp}
 \DeclareMathOperator{\diam}{diam}
  \renewcommand{\proofname}{{\bf Proof:}}
 \theoremstyle{plain}
 \newtheorem{MainThm}{Theorem}
 \newtheorem{MainCor}[MainThm]{Corollary}
 \newtheorem{Thm}{Theorem}[section]
 \newtheorem{Lemma}[Thm]{\bf Lemma}
 \newtheorem{Corollary}[Thm]{\bf Corollary}
 \newtheorem{Theorem}[Thm]{\bf Theorem}
 \newtheorem{Proposition}[Thm]{\bf Proposition}
\newtheorem{Statement}[Thm]{\bf Statement}
 \theoremstyle{definition}
 \theoremstyle{remark}
 \newtheorem{Remark}[Thm]{\bf Remark}
 \newtheoremstyle{Cl}% name
  {5pt}%      Space above
  {3pt}%      Space below
  {\sl}%   Body font
  {}%         Indent amount (empty = no indent, \parindent = para indent)
  {\it}% Thm head font
  {:}%        Punctuation after thm head
  {.5em}%     Space after thm head: " " = normal interword space;
 \theoremstyle{Cl}
 \newtheorem*{Claim}{Claim}
 \newtheorem{claim}[Thm]{Claim}
 \def\begincproof{
                  \renewcommand{\proofname}{\it Proof:}
                  \begin{proof}
                 }
 \def\endcproof{
                \renewcommand{\qedsymbol}{$\diamondsuit$}
                \end{proof} 
                \renewcommand{\qedsymbol}{\openbox}
                \renewcommand{\proofname}{\bf Proof:}
               }
\def\t{T}
\def\ti{S}
\def\oF{\ov{F}}
\def\oy{\ov{y}}
\DeclareMathOperator\Lip{Lip}
\DeclareMathOperator\Per{Per}
\DeclareMathOperator\Fix{Fix}
\providecommand\@dotsep{5}
 \renewcommand{\proofname}{{\bf Proof:}}
 \title
 {Ground States are generically a periodic orbit}
 \author[G. Contreras]{Gonzalo Contreras}
\address{CIMAT \\
          A.P. 402, 36.000 \\
          Guanajuato. GTO \\
          M\'exico.}
\email{gonzalo@cimat.mx}
\thanks{Gonzalo Contreras was Partially supported by {\sc conacyt}, Mexico, grant  178838.}
\begin{document}

   \begin{abstract} We prove that for an expanding transformation the maximizing measures
  	                     of a generic Lipschitz function are supported on a single periodic orbit.
   \end{abstract}

\openup 2pt

 \maketitle
 
 \section{Introduction}

 Let $X$ be a compact metric space and $T:X\to X$ an expanding map. 
 This means that $T$ is Lipschitz continuous and there are numbers
 $d\in\Z^+$, $0<\la<1$ such that for every point $x\in X$ there is a 
 neighborhood $U_x$ of $x$ in $X$ and  continuous branches $S_i$,
 $i=1,\ldots,\ell_x\le d$ of the inverse of $T$ with disjoint images $S_i(U_x)$,
 such that 
 $T^{-1}(U_x)=\bigcup_{i=1}^{\ell_x} S_i(U_x)$,  $T\circ S_i= I_{U_x}$~$\forall i$,
 and
$$
d\big(S_i(y),S_i(z)\big)\le \la\, d(y,z)
\qquad 
\forall y,  z\in U_x.
$$

  Given a continuous function $F:X\to\re$, a {\it maximizing measure}
  is a  $T$-invariant Borel probability measure $\mu$ which 
  maximizes the integral of $F$ among all $T$-invariant Borel probabilities:
  $$
  \int F\;d\mu = \sup\Big\{\int F\;d\nu \;\Big|\; \nu\in\cM(T)\Big\},
  $$
  where
  $$
  \cM(T)=\big\{\text{ $T$-invariant Borel probabilities in $X$ }\big\}.
  $$
  
  Recall that an {\it equilibrium state} for $F$ is an invariant Borel probability $\mu_F$ which satisfies
  $$
  \mu_F :=  \arg\max\Big\{h_\mu(T)+\int F\;d\mu\; \Big|\;\mu\in\cM(T)\,\Big\}.
  $$
   {\it Ground states} are the  zero temperature limits of equilibrium states.
  This means limits of the form $\lim\limits_{\be\to+\infty} \mu_{\be F}$.
  Here $\beta$ is 
  interpreted as the inverse of the temperature. It is known \cite[Proposition 29]{CLT}
  that if the limit of a sequence $\{\mu_{\be_k F}\}_k$ with $\be_k\to\infty$ exists, then it
  has to be a maximizing measure with maximal entropy among the maximizing measures.
   Br\'emont \cite{Bre1} proves that the limit  $\lim\limits_{\be\to +\infty}\mu_{\be F}$ exists if $F$ is locally constant.
  Chazottes, Gambaudo and Ugalde \cite{CGU} give a characterization of the limit and a new proof
  of Br\'emont's result. 
  Leplaideur \cite{Lep} gives another proof of Br\'emont's theorem and a generalization:
  if $G$ is H\"older continuous and $F$ is locally constant, then 
  the limit when $\be\to\infty$  of the equilibrium states
  of $G+\be\,F$  exists. 
    Chazottes and Hochman \cite{ChHo} give an example of a Lipschitz function $F$ 
  for which the zero temperature limit does not exist.
  An example with a discontinuous function 
  was given before by Van Enter and Ruszel \cite{vER}.

  For generic H\"older or Lipschitz functions $F$, the maximizing measure is unique.
  This is proven in Contreras, Lopes, Thieullen~\cite{CLT} 
  and it is presented in a general version in 
  Jenkinson~\cite{ErgOpt}.
  The ideas came from an analogous result for lagrangian systems by Ma\~n\'e~\cite{Ma6}.
  After Jenkinson lecture notes \cite{ErgOpt} the study of maximizing measures for a fixed
  dynamical system became known 
  as Ergodic Optimization. Surveys of the subject are presented by Jenkinson \cite{ErgOpt} 
  and Baraviera, Leplaideur, Lopes \cite{BLL}. 
  
  \begin{Theorem}[Contreras, Lopes, Thieullen \cite{CLT}, see also Jenkinson~\cite{ErgOpt}]\label{CLT}
  Let $T:X\to X$ be a continuous map of a compact metric space. 
  Let $E$ be a topological vector space which is densely and continuously embedded in
  $C^0(X,\re)$. Write
  $$
  \cU(E):=\big\{\,F\in E \;\big|\;\text{\rm there is a unique $F$-maximizing measure}\;\big\}.
  $$
  Then $\cU(E)$ is a countable intersection of open and dense sets.
  
  If moreover $E$ is a Baire space, then $\cU(E)$ is dense in $E$.
  \end{Theorem}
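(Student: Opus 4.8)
The plan is to realize $\cU(E)$ \emph{directly} as a countable intersection of open dense subsets of $E$, using only the convexity of the maximizing value and the separability of $C^0(X,\re)$; no perturbation scheme or infinite series enters, which is exactly why the statement holds for an arbitrary topological vector space rather than only for a Banach space.

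First I would introduce the maximizing value $\beta(F):=\max_{\nu\in\cM(T)}\int F\,d\nu$, the maximum being attained since $\cM(T)$ is a nonempty, weak$^*$-compact (and metrizable) convex set; $\beta$ is a supremum of the weak$^*$-continuous linear functionals $\nu\mapsto\int F\,d\nu$, hence convex, and it is $1$-Lipschitz for the uniform norm. Let $\cM_{\max}(F)=\{\nu\in\cM(T):\int F\,d\nu=\beta(F)\}$ be the compact convex face of $F$-maximizing measures. For $\phi\in C^0(X,\re)$ the function $t\mapsto\beta(F+t\phi)$ is convex, and the core computation is that its one-sided derivatives at $t=0$ are
\[
\lim_{t\to 0^+}\frac{\beta(F+t\phi)-\beta(F)}{t}=\max_{\nu\in\cM_{\max}(F)}\int\phi\,d\nu,
\qquad
\lim_{t\to 0^-}\frac{\beta(F+t\phi)-\beta(F)}{t}=\min_{\nu\in\cM_{\max}(F)}\int\phi\,d\nu .
\]
Here ``$\ge$'' in the first identity follows by testing $\beta(F+t\phi)$ against a measure in $\cM_{\max}(F)$ maximizing $\int\phi$, and the reverse inequality because any weak$^*$ accumulation point, as $t_k\downarrow 0$, of a choice of measures $\nu_k\in\cM_{\max}(F+t_k\phi)$ must lie in $\cM_{\max}(F)$ (by continuity of $\beta$) and attain $\max_{\nu\in\cM_{\max}(F)}\int\phi\,d\nu$. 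Set
\[
\gamma(F,\phi):=\max_{\nu\in\cM_{\max}(F)}\int\phi\,d\nu-\min_{\nu\in\cM_{\max}(F)}\int\phi\,d\nu\;\ge\;0 ,
\]
so that $\gamma(F,\phi)=0$ for all $\phi$ precisely when $\cM_{\max}(F)$ is a single point.

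Next, for each fixed $\phi\in E$ I would show that $\cU^{\phi}_n:=\{F\in E:\gamma(F,\phi)<1/n\}$ is open and dense in $E$. \emph{Openness}: the difference quotient of a convex function is monotone in its increment, so $F\mapsto\lim_{t\to0^+}\frac1t\big(\beta(F+t\phi)-\beta(F)\big)=\inf_{t>0}\frac1t\big(\beta(F+t\phi)-\beta(F)\big)$ is an infimum of functions continuous for the uniform norm, hence upper semicontinuous, and symmetrically the left derivative is lower semicontinuous; thus $\gamma(\cdot,\phi)$ is upper semicontinuous on $C^0(X,\re)$, and composing with the continuous inclusion $E\hookrightarrow C^0(X,\re)$ keeps it upper semicontinuous on $E$, so $\cU^{\phi}_n$ is open. \emph{Density}: given $F_0\in E$ and a neighborhood $W$ of $0$ in $E$, continuity of scalar multiplication gives $\delta>0$ with $t\phi\in W$ whenever $|t|<\delta$; since $s\mapsto\beta(F_0+s\phi)$ is convex it is differentiable off a countable set, so there is $t_0$ with $|t_0|<\delta$ at which it is differentiable, i.e. $\gamma(F_0+t_0\phi,\phi)=0$, and $F_0+t_0\phi$ lies in $(F_0+W)\cap\cU^{\phi}_n$.

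Since $X$ is compact metric, $C^0(X,\re)$ is separable, so the dense subspace $E$ contains a sequence $\{\phi_j\}_{j\in\na}$ that is uniformly dense in $C^0(X,\re)$, and I would finish by checking $\cU(E)=\bigcap_{j\in\na}\bigcap_{n\in\na}\cU^{\phi_j}_n$. The inclusion ``$\subseteq$'' is immediate. For ``$\supseteq$'': if $\gamma(F,\phi_j)=0$ for every $j$, then $\nu\mapsto\int\phi_j\,d\nu$ is constant on $\cM_{\max}(F)$ for each $j$, and approximating an arbitrary $\psi\in C^0(X,\re)$ uniformly by the $\phi_j$ shows $\nu\mapsto\int\psi\,d\nu$ is constant on $\cM_{\max}(F)$ for all continuous $\psi$, whence $\cM_{\max}(F)$ is a single point and $F\in\cU(E)$. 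This exhibits $\cU(E)$ as a countable intersection of open dense subsets of $E$, and if $E$ is a Baire space such an intersection is dense (indeed residual). The one place I expect to have to argue carefully is the derivative computation above — equivalently, the upper semicontinuity of the maximizing-measure correspondence $F\mapsto\cM_{\max}(F)$ — which rests on weak$^*$-compactness of $\cM(T)$ and continuity of $\beta$; everything else is soft.
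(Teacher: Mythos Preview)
The paper does not actually contain a proof of this theorem: it is stated in the introduction with attribution to \cite{CLT} and \cite{ErgOpt} and then used as a tool (in the proof of Theorem~\ref{Tmorris} in the appendix and implicitly in Section~\ref{S2}), but no argument is given. So there is no ``paper's own proof'' to compare against.

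That said, your argument is correct and is essentially the standard one found in Jenkinson's survey~\cite{ErgOpt}: recognise $\beta$ as a convex Lipschitz function on $C^0(X,\re)$, identify the one--sided directional derivatives of $t\mapsto\beta(F+t\phi)$ with $\max/\min$ of $\int\phi\,d\nu$ over $\cM_{\max}(F)$, deduce upper semicontinuity of the oscillation $\gamma(\cdot,\phi)$, and use a.e.\ differentiability of one--variable convex functions to get density of each $\cU^\phi_n$. The separability step (choosing a countable $\{\phi_j\}\subset E$ uniformly dense in $C^0(X,\re)$) and the passage from ``$\gamma(F,\phi_j)=0$ for all $j$'' to ``$\cM_{\max}(F)$ is a singleton'' are both handled correctly. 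The only point worth writing out with a touch more care is the derivative identity, exactly as you flag: the inequality $\limsup_{t\downarrow0}\tfrac1t(\beta(F+t\phi)-\beta(F))\le\max_{\nu\in\cM_{\max}(F)}\int\phi\,d\nu$ needs the observation that any weak$^*$ limit of $\nu_k\in\cM_{\max}(F+t_k\phi)$ along $t_k\downarrow0$ lies in $\cM_{\max}(F)$, which in turn uses the (uniform--norm) continuity of $\beta$. You have this; just make the compactness/subsequence extraction explicit.
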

  
  The main conjecture in Ergodic Optimization during the last decade have been
  wether the maximizing measure for generic H\"older or Lipschitz functions $F$
  is supported on a periodic orbit. For lagrangian systems an analogous statement 
  is known as Ma\~n\'e's conjecture.

   On the space $\Lip(X,\re)$ of Lipschitz functions on $X$ we use the norm
   \begin{equation}\label{lipnorm}
   \lV f\rV := \sup_{x\in X}|f(x)| + \sup_{x\ne y}\frac{|f(x)-f(y)|}{d(x,y)}.
   \end{equation}
   We denote the the first term in \eqref{lipnorm} as $\lV f \rV_0$ 
   and the second term as $\Lip(f)$.
   
%   Recall that using a Markov partition 
%(cf. Ruelle~\cite[\S 7.29]{ruelle}) the symbolic model for an expanding
%   map is given by a one-sided subshift of finite type.    
    Here we prove
  \begin{MainThm}\label{ThmA}
  If $X$ is a compact metric space and $T:X\hookleftarrow$ is an expanding map 
  then there is an open and dense set $\cO\subset \Lip(X,\re)$ such that for all 
  $F\in\cO$ there is a single $F$-maximizing measure and it is supported on 
  a periodic orbit.
  \end{MainThm}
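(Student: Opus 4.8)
The plan is to exhibit an explicit open set $\cO$ and to show it is dense. Say that $F\in\Lip(X,\re)$ \emph{locks} a periodic orbit $\Gamma$ if there are a continuous function $u\colon X\to\re$ and a constant $c>0$ with
\[
F+u-u\circ T\le\alpha(F)-c\,d(\,\cdot\,,\Gamma)\ \text{on}\ X,\qquad F+u-u\circ T=\alpha(F)\ \text{on}\ \Gamma,
\]
where $\alpha(F):=\sup\{\int F\,d\nu:\nu\in\cM(T)\}$; let $\cO$ be the set of functions locking some periodic orbit. If $F$ locks $\Gamma$, then its orbit measure $\mu_\Gamma$ is the unique maximizing measure: integrating the first relation against an invariant $\nu$ (the coboundary $u-u\circ T$ has zero integral) gives $\int F\,d\nu\le\alpha(F)-c\int d(\,\cdot\,,\Gamma)\,d\nu$, with equality exactly when $\nu=\mu_\Gamma$. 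We show $\cO$ is open and dense.

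\emph{Openness.} Let $F$ lock $\Gamma=\{q_0,\dots,q_{k-1}\}$, $Tq_i=q_{i+1}$ (indices mod $k$), with data $u,c$, and let $G=F+\psi$, $\psi\in\Lip(X,\re)$. The key observation is that on the \emph{fixed, finite} orbit $\Gamma$ any mean-zero function is a coboundary of controlled norm: with $\overline\psi:=\tfrac1k\sum_i\psi(q_i)$, put $v(q_0):=0$ and $v(q_{i+1}):=v(q_i)+\psi(q_i)-\overline\psi$ (consistent, since $\sum_i(\psi(q_i)-\overline\psi)=0$); then $\max_\Gamma|v|\le 2k\|\psi\|_0$, so $v$ extends, by McShane, to a Lipschitz function on $X$ with $\|v\|_0\le 2k\|\psi\|_0$ and $\Lip(v)\le 4k\|\psi\|_0/\gamma_\Gamma$, where $\gamma_\Gamma:=\min_{i\neq j}d(q_i,q_j)>0$; both bounds tend to $0$ with $\psi$. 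Set $u':=u+v$ and $\chi:=\psi+v-v\circ T$. On $\Gamma$ one computes $\chi(q_i)=\psi(q_i)-\bigl(v(q_{i+1})-v(q_i)\bigr)=\overline\psi$, so $\chi-\overline\psi$ is Lipschitz, vanishes on $\Gamma$, and has Lipschitz constant $\eta\le\Lip(\psi)+\Lip(v)\,(1+\Lip(T))\to 0$; hence $\chi-\overline\psi\le\eta\,d(\,\cdot\,,\Gamma)$. Therefore $G+u'-u'\circ T=(F+u-u\circ T)+\chi\le(\alpha(F)+\overline\psi)-(c-\eta)\,d(\,\cdot\,,\Gamma)$, with equality on $\Gamma$. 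As soon as $\|\psi\|$ is small enough that $\eta<c$, integrating against invariant measures gives $\alpha(G)=\alpha(F)+\overline\psi$ and shows that $G$ locks $\Gamma$ with data $u',\,c-\eta$. Thus $\cO$ is open.

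\emph{Density: reduction.} It suffices to prove that the functions admitting \emph{some} periodic maximizing measure are dense in $\Lip(X,\re)$. Indeed, suppose $\mu_\Gamma$ maximizes $F$, and let $u$ be a Lipschitz subaction of $F$ (which exists because $T$ is expanding, by the Ma\~n\'e--Conze--Guivarc'h lemma): then $V:=\alpha(F)-(F+u-u\circ T)\ge 0$ has $\int V\,d\mu_\Gamma=0$, so $V\equiv 0$ on $\Gamma$. For $c>0$ put $G:=F-c\,d(\,\cdot\,,\Gamma)$. Using the \emph{same} $u$, $G+u-u\circ T=\alpha(F)-V-c\,d(\,\cdot\,,\Gamma)\le\alpha(F)-c\,d(\,\cdot\,,\Gamma)$, with equality precisely on $\{V=0\}\cap\Gamma=\Gamma$; moreover $\int G\,d\mu_\Gamma=\alpha(F)$, so $\alpha(G)=\alpha(F)$. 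Hence $G$ locks $\Gamma$, i.e.\ $G\in\cO$, while $\|G-F\|\le c\,(1+\diam X)$ can be made as small as we wish.

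\emph{Density: main step, and the obstacle.} It remains to show that for every $F$ and $\epsilon>0$ there is $G$ with $\|G-F\|<\epsilon$ having a periodic maximizing measure. Fix a Lipschitz subaction $u$, let $\Omega:=\{F+u-u\circ T=\alpha(F)\}$ (which contains the support of every maximizing measure), and take an ergodic maximizing $\mu$, so that $\supp\mu\subseteq\Omega$ is a nonempty compact $T$-invariant set and hence contains a minimal set. If a minimal subset of $\Omega$ is a periodic orbit, we are done by the reduction. Otherwise, the closing lemma for expanding maps, applied along a $\mu$-generic point of $\Omega$, yields for suitable large $N$ a periodic orbit $\Gamma$ of period $N$ shadowing a length-$N$ piece of that orbit, so that $\mu_\Gamma\to\mu$ weak$^*$ and $\int V\,d\mu_\Gamma\to 0$, where $V=\alpha(F)-(F+u-u\circ T)$. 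One then perturbs $F$ by less than $\epsilon$ to a function $G$ for which $\mu_\Gamma$ becomes maximizing — equivalently, for which $\Gamma$ enters the Aubry set — by adding a small Lipschitz function, adapted to the geometrically shrinking cross-sections of the shadowing tube around $\Gamma$, that renders $\Gamma$ a calibrated orbit of $G$; digging a potential well around $\Gamma$ as in the reduction then lands inside the open set $\cO$, proving Theorem~\ref{ThmA}. I expect this last perturbation to be the main obstacle, for a quantitative reason: a periodic orbit $\Gamma$ that approximates $\mu$ well has period $N\to\infty$, and its $F$-average falls short of $\alpha(F)$ by the positive quantity $\int V\,d\mu_\Gamma$, which any admissible perturbation must compensate; yet, since $\mu_\Gamma$ and $\mu$ are $W_1$-close, a perturbation can tell them apart only through its Lipschitz constant, while the coboundary corrections one is forced to impose along $\Gamma$ have size growing with $N$. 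Reconciling ``small in the Lipschitz norm'' with ``renders $\Gamma$ calibrated, uniformly in $N$'' is exactly where the expanding hypothesis — the exponential contraction of inverse branches, equivalently a Markov partition and its symbolic coding — has to be used decisively, and is the technical core of the argument.
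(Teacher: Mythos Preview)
Your openness argument and the reduction step are both correct and nicely done; indeed, the observation that once some periodic orbit $\Gamma$ already maximizes $F$ you can ``dig a well'' $F\mapsto F-c\,d(\cdot,\Gamma)$ to land in the open locking set is standard and sound. The gap is entirely in the paragraph you yourself flag as the obstacle: you have not shown that functions with a periodic maximizing measure are dense, and this \emph{is} the theorem. The perturbation you sketch---a small Lipschitz bump adapted to the shadowing tube that makes a closing-lemma orbit $\Gamma$ calibrated---is exactly the Yuan--Hunt / Quas--Siefken mechanism underlying the paper's Proposition~2.6, and it does not go through in general. The quantitative obstruction you describe is real: for the perturbation to have Lipschitz norm $<\varepsilon$ one needs the minimum separation $\gamma$ of the points of $\Gamma$ (or of a pseudo-orbit generating it) to be large compared with the size $\delta$ of the jump used to close up, roughly $\gamma/\delta\gg 1/\varepsilon$. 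Nothing in the closing lemma controls $\gamma$, so the scheme can fail for \emph{every} candidate $\Gamma$.

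The paper's route is therefore not a refinement of your sketch but a genuinely different argument by contradiction. Suppose no such perturbation works near $F$. Then the failure of the $\gamma/\delta$ condition (Proposition~2.6 with $M=2$) says that \emph{every} short periodic $\delta$-pseudo-orbit in $[\overline F=0]$ with at most two jumps has an internal return at scale $Q\delta$ for some fixed $Q>1$. Iterating this forces a cascade of returns at geometrically growing scales, which via a combinatorial tree argument (Proposition~3.2) shows that $Q^{-N}$-returns of a generic orbit to a fixed point $w$ are separated by time $\gtrsim 2^{(N-N_0)/2}$. Brin--Katok then gives $h_\mu(T)\ge\frac{\log\lambda^{-1}}{\log Q}\log\sqrt 2>0$. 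This contradicts Morris's theorem (Theorem~1.3) that generically the maximizing measure has zero entropy---a result the paper invokes as a black box and whose proof in turn uses the Bressaud--Quas approximation theorem. So the decisive ingredient you are missing is not a sharper perturbation estimate but the entropy dichotomy: either the pseudo-orbit separation condition of Proposition~2.6 is met (and the Yuan--Hunt perturbation applies), or the maximizing measure has positive entropy (ruled out generically by Morris).
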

  
  \begin{MainCor}\label{CorB}
  For an open and dense set $\cO$ of Lipschitz functions $F$ on $X$ the
  zero temperature limit $\lim\limits_{\be\to+\infty}\mu_{\be F}$ exists and 
  it is supported on a 
  single periodic orbit.
  \end{MainCor}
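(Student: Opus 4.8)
The plan is to derive Corollary \ref{CorB} directly from Theorem \ref{ThmA} and \cite[Proposition 29]{CLT}, so that all the substance sits in Theorem \ref{ThmA}. Let $\cO\subset\Lip(X,\re)$ be the open and dense set furnished by Theorem \ref{ThmA}: for every $F\in\cO$ there is a unique $F$-maximizing measure $\mu_F$, and $\mu_F$ is the (automatically unique) invariant probability carried by a single periodic orbit. I will show that for such $F$ one has $\mu_{\be F}\to\mu_F$ weakly-$*$ as $\be\to+\infty$, which is exactly the assertion.

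First I would dispose of two preliminaries. (i) $T$ is positively expansive: if $d(T^nx,T^ny)$ stays below a small constant for all $n\ge 0$, then iterating the contracting inverse branches of $T$ gives $d(x,y)\le\la^n\,d(T^nx,T^ny)\to 0$, hence $x=y$; therefore the entropy map $\mu\mapsto h_\mu(T)$ is upper semicontinuous on the compact metrizable space $\cM(T)$, the functional $\mu\mapsto h_\mu(T)+\be\int F\,d\mu$ attains its supremum, and so an equilibrium state $\mu_{\be F}$ exists for every $\be>0$ (should it fail to be unique, fix an arbitrary choice; the argument below is insensitive to it). (ii) For $\be>0$ the $\be F$-maximizing measures coincide with the $F$-maximizing ones, since $\int\be F\,d\nu=\be\int F\,d\nu$ preserves the order among invariant measures; in particular $\mu_F$ is the \emph{only} $\be F$-maximizing measure.

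Now the limit. Suppose, for contradiction, that $\mu_{\be F}\not\to\mu_F$; then there are $\e>0$ and $\be_k\to+\infty$ with $\mu_{\be_k F}$ outside the $\e$-neighbourhood of $\mu_F$. By compactness of $\cM(T)$, pass to a subsequence with $\mu_{\be_{k_j}F}\to\nu\in\cM(T)$. By \cite[Proposition 29]{CLT}, the limit $\nu$ of a sequence of equilibrium states whose temperatures tend to zero is an $F$-maximizing measure (of maximal entropy among them), so by (ii) $\nu=\mu_F$, contradicting $d(\mu_{\be_{k_j}F},\mu_F)\ge\e$. Hence $\lim_{\be\to+\infty}\mu_{\be F}=\mu_F$, which by Theorem \ref{ThmA} is supported on a single periodic orbit, and $\cO$ is open and dense as claimed there. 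The deduction is entirely soft: the only point needing even a remark is the mere existence of the equilibrium states $\mu_{\be F}$, which is handled by the expansiveness observation above, so I foresee no real obstacle here; the genuine content is Theorem \ref{ThmA}.
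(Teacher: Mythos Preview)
Your proposal is correct and is exactly the deduction the paper has in mind: Corollary~\ref{CorB} is not given a separate proof in the paper but is meant to follow at once from Theorem~\ref{ThmA} together with \cite[Proposition~29]{CLT}, and your write-up spells this out (with the extra care of noting that positive expansiveness guarantees existence of the equilibrium states $\mu_{\be F}$). Nothing to change.
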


  On the negative side, for expanding transformations 
  Bousch \cite[Proposition 9, p. 306]{Bou1} proves that for generic continuous functions
  the maximizing measure is not supported on a periodic orbit. 
  Indeed, its support is the total space \cite[Rem. 7]{Bou1}.
  Bousch theorem in he case of hyperbolic sets is presented by 
  Jenkinson in \cite[Theorem 4.2]{ErgOpt}.
  
  There have been several approaches to the conjecture from which 
  we will use some of their techniques. Write
  $$
  \cP(E) := \big\{\, F\in E\; \big|\; \text{\rm the unique $F$-maximizing measure is supported 
  on a periodic orbit }\big\}. 
  $$
  Contreras, Lopes, Thieullen \cite{CLT} prove that $\cP(E)$ is open for $E=C^\a(X,\re)$ 
  the space of $\a$-H\"older continuous functions and in the $\a$-H\"older topology
   it is open and dense in $E=C^{!\a}(X,\re)$,
  the space of functions $F:X\to\re$ such that
  $$
  \forall \eta>0 \quad \exists\e>0\qquad d(x,y)<\e \implies |F(x)-F(y)|<\eta \, d(x,y)^\a.
  $$
  The main technique is the introduction of a sub-action $u:X\to\re$ to transform the function $F$
  to a cohomologous function $G= F+u-u\circ T$ such that $G\le a = \int G \, d\mu^G$,
  where $\mu^G$ is a maximizing measure for $G$ and $F$. The sub-action is defined 
  similarly, and plays the same role, as a sub-solution of the Hamilton-Jacobi equation 
  for Lagrangian systems. In fact analogous constructions to the weak KAM theory 
  can be translated to this setting. In proposition~\ref{wK} we construct a sub-action
  following the original method by Fathi \cite{Fa1} to construct weak KAM solutions.  
  This method was used in ergodic optimization by Bousch in \cite{BouschP}.
  In fact many results from Lagrangians systems can be translated to the ergodic 
  optimization setting, see for example Garibaldi, Lopes, Thieullen~\cite{GLT}.

  Bousch proves that $\cP(E)$ is dense for Walters functions. 
  Yuan and Hunt \cite{YH} prove that if a fixed measure is maximizing for an open set
  of functions $F$ in the Lipschitz topology, then it is supported on a periodic orbit.
  Their method of perturbation is the basis of the present work. 
  Quas and Siefken \cite{QS} work  in a one-sided shift. They prove that $\cP(E)$ 
  contains an open and dense set if $E$ is the space of super-continuous functions.
  They present an elegant version of the method of Yuan and Hunt. 
  We need to modify it for Lipschitz functions and pseudo-orbits with finitely many jumps
  in Proposition~\ref{perturb}.
  
  Another ingredient of the proof is the following theorem.
  As a weak version of the conjecture, Morris \cite{Morris} proves
  \begin{Theorem}[Morris \cite{Morris}]\label{Tmorris}
  Let $X$ be a compact metric space and $T:X\hookleftarrow$ an expanding map.
  There is a residual set $\cG\subset \Lip(X,\re)$ such that if
  $F\in \cG$ then there is a unique $F$-maximizing measure and 
  it has zero metric entropy.
  \end{Theorem}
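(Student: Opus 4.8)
The plan is to reduce the assertion to a density statement about the function
$$
\oH(F):=\max\big\{\,h_\mu(T)\;\big|\;\mu\ \text{is}\ F\text{-maximizing}\,\big\}
$$
on $\Lip(X,\re)$, and then to establish that density by a perturbation argument whose quantitative heart is, I expect, the main obstacle. First I would verify that $\oH$ is well defined, finite, and upper semicontinuous. An expanding map is positively expansive, so $\mu\mapsto h_\mu(T)$ is upper semicontinuous on the weak-star compact simplex $\cM(T)$; as the $F$-maximizing measures form a nonempty compact convex subset of $\cM(T)$, the maximum above is attained and $\oH(F)\le h_{\mathrm{top}}(T)<\infty$. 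If $F_k\to F$ in $\Lip(X,\re)$, $\mu_k$ is $F_k$-maximizing with $h_{\mu_k}(T)=\oH(F_k)$, and $\mu_k\to\mu$ in the weak-star topology along a subsequence, then $\int F_k\,d\mu_k\to\int F\,d\mu$ while $\int F_k\,d\mu_k\ge\int F_k\,d\nu$ for every $\nu\in\cM(T)$; hence $\mu$ is $F$-maximizing, and by upper semicontinuity of entropy $\oH(F)\ge h_\mu(T)\ge\limsup_k h_{\mu_k}(T)=\limsup_k\oH(F_k)$. So $\{\oH=0\}=\bigcap_{n\ge1}\{\oH<1/n\}$ is a $G_\delta$. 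By Theorem~\ref{CLT}, the set $\cU(\Lip(X,\re))$ of functions with a unique maximizing measure is residual, so $\cG:=\{\oH=0\}\cap\cU(\Lip(X,\re))$ is residual; every $F\in\cG$ has a unique maximizing measure, and it has zero entropy because $\oH(F)=0$. It therefore remains to prove that $\{\oH=0\}$ is dense.

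For density, fix $F$ and $\e>0$ and normalise. Let $u$ be a Lipschitz sub-action for $F$ (Proposition~\ref{wK}), so $\widetilde F:=F+u-u\circ T\le\a:=\max_{\nu\in\cM(T)}\int F\,d\nu$, with equality exactly on the largest $T$-invariant subset $\La$ of $\{\widetilde F=\a\}$; the $\widetilde F$-maximizing measures are precisely the invariant measures carried by $\La$, they coincide with the $F$-maximizing measures, and $\oH(F)=h_{\mathrm{top}}(T|_\La)$. Since for any Lipschitz $\widetilde G$ the function $G:=\widetilde G-u+u\circ T$ satisfies $G-F=\widetilde G-\widetilde F$ and is cohomologous to $\widetilde G$, perturbing $\widetilde F$ and then re-adding the coboundary keeps us within $\e$ of $F$ without altering maximizing measures or their entropies. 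So we may assume $F\le\a$ and $\La=\La_F$ as above; if $h_{\mathrm{top}}(T|_\La)=0$ we are done, so assume it is positive.

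The decisive step is then the perturbation. Write $G=F+g$ with $\lV g\rV<\e$; for any $G$-maximizing $\mu^*$ and any $F$-maximizing $\mu_0$,
$$
\a-\int F\,d\mu^*\;\le\;\int g\,d\mu^*-\int g\,d\mu_0\;\le\;2\lV g\rV_0\;<\;2\e,
$$
so every $G$-maximizing measure has defect $\int(\a-F)\,d\mu^*\le 2\e$, i.e.\ its mass is concentrated near $\La$ in an averaged sense. The idea is to select, using the local structure of $T$ near $\La$ (in the model case of a subshift, the approximating subshifts of finite type $\La^{(k)}$ decreasing to $\La$), a zero-entropy invariant measure $\nu$ that is \emph{dynamically} extremely close to $\La$ (defect $\ll\e$) yet \emph{statistically far} from the $F$-maximizing measures --- far enough that a single Lipschitz $g$ with $\lV g\rV<\e$ separates $\nu$ from every $F$-maximizing measure by strictly more than $\nu$'s defect, while creating no competing near-maximal measure elsewhere. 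For such a $g$, $\nu$ strictly beats every $F$-maximizing measure for $G$, the maximal invariant set carrying the $G$-maximizing measures has zero topological entropy, and re-adding the coboundary produces the desired $G$, giving $\oH(G)=0$.

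The hard part is exactly this construction, and within it the existence of such a $\nu$. A Lipschitz $g$ with $\lV g\rV<\e$ can make $\nu$ beat $\mu_0$ only if the Kantorovich--Wasserstein distance from $\nu$ to $\mu_0$ exceeds $\nu$'s defect by a factor larger than $1/\e$; but the obvious candidate --- the periodic orbit obtained by closing a long $\mu_0$-generic orbit segment --- converges to $\mu_0$ in that distance at essentially the rate at which its defect tends to $0$, and so fails this test. One must instead produce zero-entropy measures that approach $\La$ arbitrarily closely in the dynamical sense --- even when $\La$ is minimal and carries positive topological entropy --- while remaining visibly off the maximizing set in the Kantorovich sense, and control the Lipschitz norm of the separating perturbation through the contraction rate $\la$; this is where the combinatorics of $T$ near $\La$, expansiveness, and the compactness of $X$ are used in an essential way. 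Granting this, on $\cG$ the entropy is $0$ by construction while uniqueness is furnished by Theorem~\ref{CLT}.
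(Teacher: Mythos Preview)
Your reduction to a density statement via upper semicontinuity of $\oH$ is correct and mirrors the paper's Step~1 (openness of $\cE_\gamma$). The density argument, however, has a genuine gap beyond the construction you already flag as missing. Suppose you find a zero-entropy $\nu$ and a Lipschitz $g$ with $\lV g\rV<\e$ such that $\int G\,d\nu>\int G\,d\mu$ for every $F$-maximizing $\mu$. This does \emph{not} imply that the $G$-maximizing measures have zero entropy: a $G$-maximizer $\mu^*$ need not be $F$-maximizing, and your defect bound $\int(\a-F)\,d\mu^*\le 2\e$ says nothing about $h_{\mu^*}(T)$ --- measures supported on $\La$ itself have zero defect yet may carry the full topological entropy $h_{\rm top}(T|_\La)>0$. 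Your clause ``while creating no competing near-maximal measure elsewhere'' is precisely the unproved step, and it is not a technicality: it is the whole problem.

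The paper resolves this differently in two respects. First, it does not aim for $\oH(G)=0$ with a single perturbation; it shows each sublevel set $\cE_\gamma=\{\oH<2\gamma\,h_{\rm top}(T)\}$ is open and dense, then intersects over $\gamma=1/n$. Second --- and this is the quantitative input you are missing --- it invokes the Bressaud--Quas theorem: periodic orbits of period $\le n$ approximate $K=\supp\mu$ to within $o(n^{-k})$ for every $k$, which yields periodic measures $\mu_n$ with support $L_n$ and defect $o(\theta^{m_n})$ while $m_n^{-1}\log n\to 0$. The perturbation is simply $g=-\beta\,d(\cdot,L_n)$. The decisive lemma (proved by counting how many elements of the iterated Markov partition $\P^{(m_n)}$ can meet the $\theta^{m_n}$-neighbourhood of the $n$ points of $L_n$) is that any $\nu$ with $h(\nu)\ge 2\gamma\,h_{\rm top}(T)$ must place mass at least $\gamma$ at distance $\ge\theta^{m_n}$ from $L_n$. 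Comparing $\beta\gamma\,\theta^{m_n}$ against the defect $C\cdot o(\theta^{m_n})$ of $\mu_n$ then shows every high-entropy $\nu$ loses to $\mu_n$ for $f_n=f+g$, hence $f_n\in\cE_\gamma$.

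So the mechanism is not ``$\nu$ is Kantorovich-far from the $F$-maximizers while having small defect'' --- a requirement you rightly diagnose as hard to meet with closing-lemma orbits --- but rather ``high-entropy measures are necessarily spread out and therefore far from \emph{any} short periodic orbit''. This reversal of viewpoint, together with Bressaud--Quas, is what bypasses the obstacle you identified.
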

 
  The idea of the proof of Theorem~\ref{Tmorris}
  is to use a periodic orbit with small action 
  and small period constructed by Bressaud and Quas \cite{BQ} and perturb $F$
  so that the new minimizing measures are nearby the periodic orbit 
  and hence have small entropy.
  
  The original version of  Theorem~\ref{Tmorris} is for H\"older functions in a shift of finite type. 
  In appendix~\ref{aze} we describe the modifications from the proof in Morris~\cite{Morris} 
  needed to obtain Theorem~\ref{Tmorris}.

 \bigskip
 
 In section~\ref{S1} we develop the techniques from ergodic optimization 
 that we need and present the main perturbation result in proposition~\ref{perturb}.
 In section~\ref{S2} we prove Theorem~\ref{ThmA} with an argument by contradiction.
 We show that if the conditions for a perturbation as in proposition~\ref{perturb}
 do not hold then the entropy must be positive, contradicting Morris Theorem~\ref{Tmorris}.

\section{Preliminars}\label{S1}
  Since $X$ is compact there is a finite subcover of $\{U_x\}_{x\in X}$ in the definition
  of expanding transformation. Also there is $e_0>0$ such that for every $x\in X$
  there is some $U_y$ such that the ball $B(x,e_0)\subset U_y$.

 We have that $e_0>0$ and $0<\la<1$ are such that for every $x\in X$ 
 the branches of the inverse of $T$ are well defined, injective, 
 have disjoint images 
 and are $\la$-contractions on the ball $B(x,e_0)$ of radius $e_0$ centered at $x$.
 
Given $F\in\Lip(X,\re)$, the Lax operator for $F$ is 
  $\cL_F:\Lip(X,\re)\hookleftarrow$
  $$
  \cL_F(u)(x)=\max_{y\in \t^{-1}(x)} \big\{\a+F(y)+u(y)\big\},
  $$
  where
  $$
  \a =\a(F):= -\max_{\mu\in\cM(\t)}\int F\, d\mu.
  $$

  Denote the set of maximizing measures by 
  $$
  \cM(F):=\Big\{\, \mu\in \cM(T)\;\Big|\; \int F \;d\mu = -\a(F)\;\Big\}.
  $$
  
  A {\it calibrated sub-action} for $F$ is a fixed point of the Lax operator $\cL_F$.
  
  \pagebreak
  
  \begin{Lemma}\label{LoF}
  \begin{enumerate}\quad
  \item[1.]  If $u\in\Lip(X,\re)$, the Lipschitz constants satisfy
  \begin{equation}\label{LFu}
  \Lip(\cL_F(u)) \le \la \big(\Lip(F)+\Lip(u)\big).
  \end{equation}
  In particular 
  $\cL_F(\Lip(X,\re))\subset \Lip(X,\re)$.
  \item[2.]   If $\cL_F(u)=u$, writing
    \begin{equation}\label{ovF}
  \oF:=F+\a(F)+u-u\circ \t
  \end{equation}
  we have that
    \begin{enumerate}
  \renewcommand\theenumii{\roman{enumii}}
  \item\label{1i}  $\displaystyle \a(\oF)=-\max_{\mu\in\cM(\t)}\int \oF\,d\mu=0$.
  \item\label{1ii}  $\oF\le 0$.
  \item\label{1iii} $\cM(F)=\cM(\oF)=\{\,\t\text{-invariant measures supported on }[\oF=0]\,\}$
  \end{enumerate}
  
  \item[3.] If $u\in\Lip(X,\re)$ and $\be\in \re$ satisfy $\cL_F(u)=u+\be$,
  then $\be =0$.
  \end{enumerate}
  \end{Lemma}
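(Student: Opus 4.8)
The plan is to read parts 1 and 2 straight off the defining maximum in $\cL_F$, and to obtain part 3 by iterating $\cL_F$ and adding one soft compactness argument. For Part 1, take $x,x'\in X$ with $d(x,x')<e_0$, so that both points lie in a common ball on which the finitely many inverse branches $S_i$ of $T$ are defined, injective, with disjoint images and $\la$-Lipschitz, and $T^{-1}(x)=\{S_i(x)\}_i$, $T^{-1}(x')=\{S_i(x')\}_i$. If $y=S_j(x)$ realizes the maximum defining $\cL_F(u)(x)$, then $S_j(x')\in T^{-1}(x')$ and $d\big(S_j(x),S_j(x')\big)\le\la\,d(x,x')$, so
\[
\cL_F(u)(x')\ \ge\ \a(F)+F\big(S_j(x')\big)+u\big(S_j(x')\big)\ \ge\ \cL_F(u)(x)-\la\big(\Lip(F)+\Lip(u)\big)\,d(x,x').
\]
Swapping $x$ and $x'$ gives $|\cL_F(u)(x)-\cL_F(u)(x')|\le\la\big(\Lip(F)+\Lip(u)\big)\,d(x,x')$ whenever $d(x,x')<e_0$, and the same bound for all pairs follows by chaining along a finite $e_0$-dense string of points; this is \eqref{LFu}, and it shows $\cL_F\big(\Lip(X,\re)\big)\subset\Lip(X,\re)$.

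For Part 2, assume $\cL_F(u)=u$. Taking $x=T(y)$ in the max-formula gives $u(T(y))\ge\a(F)+F(y)+u(y)$ for every $y\in X$, i.e.\ $\oF\le0$, which is (ii). For any $\mu\in\cM(T)$ one has $\int(u-u\circ T)\,d\mu=0$, hence $\int\oF\,d\mu=\int F\,d\mu+\a(F)$; maximizing over $\mu$ and using $\a(F)=-\max_\mu\int F\,d\mu$ yields $\max_\mu\int\oF\,d\mu=0$, i.e.\ $\a(\oF)=0$, which is (i). Finally, since $\oF$ is continuous the set $[\oF=0]$ is closed, and for $\mu\in\cM(T)$ the conditions $\mu\in\cM(F)$, $\int F\,d\mu=-\a(F)$, $\int\oF\,d\mu=0$, $\oF=0$ $\mu$-a.e.\ (here $\oF\le0$ is used), and $\supp\mu\subset[\oF=0]$ are all equivalent; running the last chain with $\oF$ in place of $F$ identifies this set with $\cM(\oF)$ as well, giving (iii).

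For Part 3, suppose $\cL_F(u)=u+\be$. Since $\cL_F(v+c)=\cL_F(v)+c$ for constants $c$, iteration gives $\cL_F^n(u)=u+n\be$, that is,
\[
u(x)+n\be\ =\ \max_{T^n y=x}\Big\{\,n\,\a(F)+\sum_{k=0}^{n-1}F(T^k y)+u(y)\,\Big\}\qquad(x\in X).
\]
Exactly as in Part 2, $\cL_F(u)=u+\be$ gives $\a(F)+F(y)+u(y)-u(T(y))\le\be$ for all $y$, and integrating against an $F$-maximizing measure (which exists by weak-$*$ compactness of $\cM(T)$) yields $\be\ge\a(F)+\int F\,d\mu=0$. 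For the reverse inequality, fix $x$, choose $y_n$ attaining the maximum above, and put $\mu_n=\frac1n\sum_{k=0}^{n-1}\delta_{T^k y_n}$; along a subsequence $\mu_n\to\mu$ weak-$*$ with $\mu\in\cM(T)$, and since $u$ is bounded
\[
\be\ =\ \a(F)+\frac1n\sum_{k=0}^{n-1}F(T^k y_n)+\frac{u(y_n)-u(x)}{n}\ \longrightarrow\ \a(F)+\int F\,d\mu\ \le\ 0 .
\]
Hence $\be=0$.

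The only step that uses more than the definition of $\cL_F$ is $\be\le0$ in Part 3, which rests on the weak-$*$ compactness of $\cM(T)$ and on the invariance of weak-$*$ limits of empirical measures; this is the one place where the normalization $\a(F)=-\max_\mu\int F\,d\mu$ is exploited ``from below''. Everything else — the chaining in Part 1, the cohomology bookkeeping in Part 2, and the remark that $[\oF=0]$ is closed (so that ``$\mu$-a.e.'' passes to ``$\supp\mu\subset\cdot$'') — is routine.
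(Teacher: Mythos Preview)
Your proof is correct. Parts~1 and~2 are essentially the paper's argument verbatim. For Part~3 you take a genuinely different route for the inequality $\be\le 0$: the paper observes that the closed set $[\oF=\be]$ contains an entire \emph{backward} pre-orbit (obtained by repeatedly choosing a maximizer in the definition of $\cL_F$), and then takes a weak-$*$ limit of empirical measures along this pre-orbit to produce a $T$-invariant probability $\nu$ supported on $[\oF=\be]$, whence $\be=\int\oF\,d\nu=\a(F)+\int F\,d\nu\le0$. You instead iterate $\cL_F^n(u)=u+n\be$, pick an $n$-th preimage $y_n$ realizing the maximum, and pass to a weak-$*$ limit of the \emph{forward} empirical measures $\tfrac1n\sum_{k=0}^{n-1}\delta_{T^k y_n}$. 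Both are Krylov--Bogolyubov style constructions and both are valid; yours is slightly more direct (no need to isolate the level set $[\oF=\be]$), while the paper's version has the pleasant by-product of exhibiting an invariant measure sitting exactly on that level set. One small caveat on Part~1: the sentence ``the same bound for all pairs follows by chaining along a finite $e_0$-dense string of points'' does not work in an arbitrary compact metric space, since such chains need not have total length comparable to $d(x,x')$; however, the paper's own proof also only treats $d(x,y)<e_0$, and for every later use (Arzel\`a--Ascoli in Proposition~\ref{wK}, etc.) the local estimate is all that is needed.
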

  
  \begin{proof} \quad
  \begin{enumerate}
  \item[1.] Given $x, y\in X$ with $d(x,y)<e_0$, let $\oy\in T^{-1}(y)$ be such that
  $$
  \cL_F(u)(y) = \a+F(\oy)+u(\oy).
  $$
  Let $S:B(y,e_0)\to X$ be the branch of the inverse of $T$ such that 
  $S(y)=\oy$. We have that
  \begin{align*}
  \cL_F(u)(y)-\cL_F(u)(x)
   &\le \a + F(\oy)+u(\oy) -\a -F(S(x))-u(S(x))
   \\
   &\le F(S(y))-F(S(x))+u(S(y))-u(S(x))
   \\
   &\le \la\,\big(\Lip(F)+\Lip(u)\big)\, d(y,x).
  \end{align*}
  The other inequality is similar.
 
  \item[2.] Observe that for any invariant probability $\mu$ we have that
  \begin{equation}\label{oF}
  \int \oF d\mu = \a + \int F\,d\mu.
  \end{equation}
  Therefore
  $$
  -\a(\oF) = \max_{\mu\in\cM(T)}\int\oF\,d\mu
  =\a(F)+ \max_{\mu\in\cM(T)}\int F\,d\mu
  = \a(F)-\a(F) = 0.
  $$
  This gives (i).
  
   (ii). Since $\cL_F(u)=u$, we have that
   \begin{align*}
   u(T(y)) \ge \a + F(y) + u(y) \qquad \forall y\in X.
   \end{align*} 
   Thus $\oF\le 0$.
   
   (iii).  By the equality \eqref{oF} we have that $\cM(F)=\cM(\oF)$.
   Since $T$ is continuous,
   under the weak* topology, the space $\cM(T)$ of invariant measures 
   is closed in the space of Borel probabilities in $X$, which is compact.
   Since $\oF$ is continuous, the map $\mu\mapsto \int\oF\, d\mu$ is
   continuous. Therefore the maximum in (i) is attained by an invariant
   probability.
   
   By (ii) the function $\oF\le 0$ is non-positive. Therefore any invariant
   measure supported on $[\oF=0]$ is maximizing for $\oF$.
   Conversely, by (i), if $\mu$ is a maximizing measure for $\oF$ then it is invariant and
    $\int\oF\,d\mu=0$.
   Thus the support of $\mu$ must be inside $[\oF=0]$.
   
   \item[3.]  Define $\oF$ by \eqref{ovF}. The hypothesis $\cL_F(u)=u+\be$ implies
   that $\oF(y)\le \be$ for all $y\in X$. Therefore
   $$
   \be \ge \max_{\mu\in\cM(T)}\int\oF\,d\mu
   = \a + \max_{\mu\in\cM(T)}\int F\, d\mu =0.
   $$

   The set $[\oF=\be]$ is closed
   and by the hypothesis $\cL_F(u)=u+\be$, it contains a whole pre-orbit.
   This means that there is a sequence $\{x_n\}_{n\in\na}\subset [\oF=\be]$
   such that $\forall n\in\na$, $T(x_{n+1})=x_n$.
   Let $\mu_N$ be the probability measure defined by
   $$
   \int f \; d\mu_N := \frac 1N \sum_{i=0}^{N-1} f(x_i),
   \qquad \forall f\in C^0(X,\re).
   $$
   Since $X$ is compact, the space of Borel probability measures on $X$ is compact.
   Therefore there is a convergent subsequence $\lim_k \mu_{N_k}=\nu$. 
   The probability $\nu$ is supported on $[\oF=\be]$ and it is  $T$-invariant.
   We have that
   $$
    \be=\int \oF\,d\nu =\a +\int F\, d\nu \le 0.
   $$

  \end{enumerate}
  \end{proof}

  For $f:X\to \re$ continuous, write
  $$
  \lV f\rV_0:=\sup_{x\in X}|f(x)|.
  $$
 
  \medskip
  
  \begin{Proposition}\label{wK}
  There exists a Lipschitz calibrated sub-action.
  \end{Proposition}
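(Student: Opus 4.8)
The plan is to construct a calibrated sub-action by the classical Lax--Oleinik / weak KAM fixed point argument, adapted to the expanding setting. First I would observe that by part~1 of Lemma~\ref{LoF}, the Lax operator $\cL_F$ maps the closed convex set
$$
\cK := \big\{\, u\in\Lip(X,\re) \;\big|\; \Lip(u)\le L_0 \,\big\},
\qquad L_0 := \tfrac{\la}{1-\la}\,\Lip(F),
$$
into itself, since $\la(\Lip(F)+L_0)=L_0$. The operator $\cL_F$ is also monotone ($u\le v \implies \cL_F(u)\le \cL_F(v)$) and commutes with the addition of constants: $\cL_F(u+c)=\cL_F(u)+c$. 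These two properties make $\cL_F$ nonexpanding for the sup-norm $\lV\cdot\rV_0$, so it is continuous on $\cK$.

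Next I would pass to the quotient by constants. On $\cK$ define the equivalence $u\sim v$ iff $u-v$ is constant, and note that $\cL_F$ descends to a well-defined map on $\widehat{\cK}:=\cK/\!\sim$. The natural metric here is $\rho(\hat u,\hat v):=\osc(u-v)=\sup(u-v)-\inf(u-v)$; one checks that $\cL_F$ is nonexpanding for $\rho$ as well, and that $\widehat{\cK}$ is compact: functions in $\cK$ have a uniform Lipschitz constant, and after normalizing $\min u =0$ they are uniformly bounded (by $L_0\diam(X)$), so Arzel\`a--Ascoli applies and the quotient is compact. Then I would apply the standard fixed-point result for a nonexpanding self-map of a compact metric space: consider the averages $A_N = \tfrac1N\sum_{n=0}^{N-1}\cL_F^n$ (which also map $\widehat{\cK}$ to itself and remain nonexpanding), extract a convergent subsequence $A_{N_k}\to A_\infty$ by compactness, and show $A_\infty$ takes values in the fixed-point set of $\cL_F$; since $\cL_F^{N_k}-I = N_k(A_{N_k}-A_{N_k-1}\circ\text{(shift)})\to 0$ in the appropriate sense, one gets $\hat u$ with $\cL_F(\hat u)=\hat u$ in $\widehat{\cK}$. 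Alternatively, and perhaps more cleanly, one can invoke the Schauder--Tychonoff fixed point theorem directly on the compact convex set $\widehat{\cK}$ for the continuous map $\cL_F$.

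Lifting back, a fixed point $\hat u$ of $\cL_F$ on $\widehat{\cK}$ gives $u\in\cK$ with $\cL_F(u)=u+\be$ for some constant $\be\in\re$; by part~3 of Lemma~\ref{LoF} we conclude $\be=0$, so $u$ is a genuine calibrated sub-action, and it is Lipschitz by construction. The main obstacle I anticipate is not any single estimate but making the fixed-point step fully rigorous: one must be careful that the quotient space $\widehat{\cK}$ is genuinely compact (the uniform Lipschitz bound from \eqref{LFu} is exactly what is needed, combined with the normalization killing the constant direction) and that $\cL_F$ is continuous there — both follow from monotonicity plus commutation with constants, which together yield $\lV\cL_F(u)-\cL_F(v)\rV_0\le\lV u-v\rV_0$ and hence $\rho$-nonexpansiveness. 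Once compactness and continuity are in hand, the existence of the fixed point is immediate from Schauder, and Lemma~\ref{LoF}(3) disposes of the additive constant.
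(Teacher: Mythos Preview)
Your proposal is correct and follows essentially the same route as the paper: the uniform Lipschitz bound from \eqref{LFu} gives an invariant equi-Lipschitz family, one quotients by constants to get a compact convex set on which $\cL_F$ is continuous (nonexpanding in the sup norm), applies Schauder to obtain a fixed point modulo constants, and then invokes Lemma~\ref{LoF}(3) to kill the additive constant $\be$. Your averaging digression is unnecessary (and slightly delicate since $\cL_F$ is not linear, so the Ces\`aro means $A_N$ do not obviously behave as you suggest), but since you correctly identify Schauder as the clean alternative, the argument goes through exactly as in the paper.
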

  
  \begin{proof}
  By \eqref{LFu}, the Lax operator $\cL_F$ leaves invariant the space
  $$
  \E :=\left \{\, u\in\Lip(X,\re)\;\Big|\; \Lip(u)\le \frac{\la \,\Lip(F)}{1-\la}\right\}.
    $$
      
     Fix $x_0\in X$. 
     Arzel\`a-Ascoli Theorem implies that 
    the quotient space $\E/\re:=\E/\{\text{constants}\}$ with the 
    supremum norm $\lV f+\re\rV_{\E/\re}:=\sup_{x\in X}|f(x)-f(x_0)|\le 2\lV f \rV_0$  
    is compact. 
    
    If $a\in\re$ then $\cL_F(u+a)= \cL_F(u)+a$. Therefore 
    $\cL_F:\E/\re\to \E/\re$ is well defined.
    If $u,\,v\in \E$,  $x\in X$ and 
    $x^*_u\in X$ is such that
    $\cL(u) = \a+F(x^*_u)+u(x^*_u)$, then
    \begin{align*}
    \cL_F(u)(x)-\cL_F(v)(x) &\le \a +F(x^*_u)+u(x^*_u)-\a-F(x^*_u)-v(x^*_u)\\
    &\le u(x^*_u)-v(x^*_u) \le \lV u-v\rV_0.
    \\
    \lV\cL_F(u)-\cL_F(v)\rV_0 &\le \lV u-v\rV_0.
    \\
    \Vert(\cL_F(u)+\re)-(\cL_F(v)+\re)\Vert_{\E/\re} &\le 2  \lV u-v\rV_0.
    \end{align*}
    Choosing representatives for $u+\re$ and $v+\re$ such that $u(x_0)=v(x_0)$,
    we have that $\lV u-v\rV_0=\lV (u+\re)-(v+\re)\rV_\E$. Thus
    $$
    \lV\cL_F(u+\re)-\cL_F(v+\re)\rV_\E\le 2 \lV(u+\re)-(v+\re)\rV_\E.
    $$

    Therefore the space $\E/\re$
    is compact and convex and on it $\cL_F$ is continuous.
    By Schauder Theorem \cite[Theorem 18.10, p.~197]{GK} $\cL_F$ has a 
    fixed point in $\E/\re$.
    In fact $\cL_F$ is non-expanding in   the supremum norm and a simpler fixed point
    applies\footnote{
                     Let $\F=\E/\re$ with the norm $\lV u+\re\rV_\F
                     :=\min_{a\in\re}\lV u+a\rV_0$. Then $(\F,\lV\cdot\rV_\F)$ 
                     is compact, convex
                     and $\cL_F$ has Lipschitz constant 1 on $\lV\cdot\rV_\F$.}
     \cite[Theorem 3.1, p.~28]{GK}. 
  
    Then there is $u\in \E$ and $\be\in \re$ such that $\cL_F(u)=u+\be$.
    By Lemma~\ref{LoF}-3, we have that $\be=0$.
  
  \end{proof}

    \bigskip

    If $u$ is a calibrated sub-action, every point $z\in X$ has a {\it calibrating pre-orbit},
    $(z_k)_{k\le 0}$ such that $T(z_{-k})=z_{-k+1}$, $\t^i(z_{-i})=z_0=z$ and
    \begin{equation}\label{cal}
    u(z_{k+1}) = u(z_{k}) + \a+ F(z_k), \qquad \forall k\le-1.
    \end{equation}
    Or equivalently, since $T(z_k)=z_{k+1}$, 
    \begin{equation}\label{cal2}
    \oF(z_k)= 0, \quad \forall k\le -1.
    \end{equation}

    The iteration of equality \eqref{cal} gives
    \begin{equation}\label{sumcal}
    \forall k\le -1,\quad
    u(z_0) = u(z_{-k})+k\a+\sum_{i=-k}^{-1} F(z_i)
    \end{equation}
    for any calibrating pre-orbit.

   \begin{Lemma}\label{rp}
    
   If there is a periodic orbit $\cO(y)$ such that for any calibrated
   sub-action the $\a$-limit
   of every calibrating pre-orbit is $\cO(y)$ then
   every maximizing measure has support on $\cO(y)$.
   \end{Lemma}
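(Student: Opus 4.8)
The plan is to push the statement onto the natural extension of the system, where a calibrating pre-orbit and a genuine two-sided Poincar\'e recurrence are available at once.

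First I would fix a Lipschitz calibrated sub-action $u$, which exists by Proposition~\ref{wK}, and set $\oF:=F+\a(F)+u-u\circ T$. By Lemma~\ref{LoF} one has $\oF\le 0$ and $\cM(F)$ equals the set of $T$-invariant probabilities supported on $[\oF=0]$; so it suffices to show that any $T$-invariant probability $\mu$ with $\mu([\oF=0])=1$ satisfies $\supp\mu\subset\cO(y)$.

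Next I would pass to the natural extension $(\hat X,\hat T,\hat\mu)$, with $\hat X=\{(x_k)_{k\in\Z}\in X^{\Z}\mid Tx_k=x_{k+1}\ \forall k\}$ compact and metrizable, $\hat T$ the invertible shift, $\pi\colon\hat X\to X$ the projection $(x_k)_k\mapsto x_0$ (so $\pi\circ\hat T=T\circ\pi$), and $\hat\mu$ the $\hat T$-invariant probability with $\pi_*\hat\mu=\mu$; then $(\pi\circ\hat T^{k})_*\hat\mu=\mu$ for every $k\in\Z$. Since $\mu([\oF\ne0])=0$, a countable union over $k\in\Z$ shows that for $\hat\mu$-a.e.\ $\hat x=(x_k)_{k\in\Z}$ one has $\oF(x_k)=0$ for \emph{every} $k\in\Z$; by \eqref{cal2} this says exactly that $(x_k)_{k\le 0}$ is a calibrating pre-orbit of $x_0$, so the hypothesis of the lemma forces its $\a$-limit set to be $\cO(y)$. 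On the other hand $\hat T^{-1}$ is a measure preserving homeomorphism of the compact metric space $\hat X$, so by Poincar\'e's recurrence theorem, for $\hat\mu$-a.e.\ $\hat x$ there is a sequence $n_j\to+\infty$ with $\hat T^{-n_j}\hat x\to\hat x$; applying the continuous map $\pi$ gives $x_{-n_j}\to x_0$, hence $x_0$ lies in the $\a$-limit set of $(x_k)_{k\le 0}$, i.e.\ $x_0\in\cO(y)$. Intersecting the two sets of full $\hat\mu$-measure, $\pi(\hat x)\in\cO(y)$ for $\hat\mu$-a.e.\ $\hat x$, and therefore $\mu(\cO(y))=1$. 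Since $\cO(y)$ is finite it is closed, so $\supp\mu\subset\cO(y)$, and then $\mu$, being $T$-invariant, is the uniform measure on $\cO(y)$.

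The step that needs a little care is recognising the backward branch of a $\hat\mu$-generic orbit of the natural extension as an honest \emph{calibrating} pre-orbit: this is exactly the equivalence between $\oF(x_k)=0$ and the calibration relation \eqref{cal}, and it is the only place where the hypothesis on calibrating pre-orbits enters. Everything else is the routine ``natural extension plus recurrence'' argument, and the only bookkeeping point is that $\hat X$, being an inverse limit of the compact metric space $X$, is itself compact and metrizable, so that Poincar\'e recurrence and the continuity of $\pi$ make sense there.
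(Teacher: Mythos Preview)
Your proposal is correct and follows essentially the same strategy as the paper: pass to the natural extension, use Poincar\'e recurrence for the inverse map to put $x_0$ in the $\alpha$-limit of its own backward branch, and use $\supp\mu\subset[\oF=0]$ (equivalently $\oF(x_k)=0$ along the branch) to recognise that branch as a calibrating pre-orbit. The only cosmetic differences are that the paper works with the one-sided inverse-limit model of the extension and restricts first to ergodic maximizing measures, whereas you use the two-sided model and a countable-union argument directly; neither affects the substance.
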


     \begin{proof}
     It is enough to prove the following
     
      \begin{Claim}
      If $\nu$ is an ergodic maximizing measure
     there is a Borel set $Y$ with $\nu(Y)=1$ 
     such that for any $y\in Y$
     there is a calibrating pre-orbit $\{x_n\}_{n\in\na}$ of a
     calibrated sub-action $u$ such that $y\in\a$-lim$\{x_n\}_{n\in\na}$.
     \end{Claim}
     
     We will prove it by applying Poincar\'e Recurrence Theorem to the 
     inverse of the natural (bijective) extension of $T$.

    There is a canonical way of embedding an expanding map into an
    invertible map as follows. Let $\X\subset X^\na=\prod_{n\in\na}X$
    be the space of sequences $\{x_n\}_{n\in\na}$ with $T(x_{n+1})=x_n$
     for every $n\in\na$, endowed with the subspace topology induced by the
     product tolopogy on $X^\na$. Since $X$ is compact, by Tychonof Theorem
     $X^\na$ is compact and then, as a closed subspace, $\X$ is compact.
      Let $\T:\X\to \X$ be defined by 
     $\T(\{x_n\}_{n\in\na})=\{T(x_{n})\}_{n\in \na}=\{\ldots, x_1,x_0,T(x_0)\}$.
     Then $\T$ is a homeomorphism with inverse
     $\T^{-1}(\{x_n\}_{n\in\na})=\{x_{n+1}\}_{n\in\na}$. We have the
     semiconjugacy
     $$
     \begin{CD}
     \X                     @> \T >>   \X  \\
     @V \pi_0 VV                      @VV \pi_0 V  \\
     X                       @> T >>   X
     \end{CD}
     $$
     given by   $\pi_0(\{x_n\}_{n\in\na})=x_0$.
     The projection $\pi_0$ is continuous and hence Borel measurable.
     There is a natural way of lifting invariant measures as follows 
     (cf. Bowen~\cite[\S 1.C]{Bowen0}). If $f\in C^0(\X,\re)$ define $f^*\in C^0(X,\re)$
     by
     $$
     f^*(x) 
     =\min f(\pi_0^{-1}\{x\}).
     $$
     If $\mu$ is a $T$-invariant Borel probability on $X$ 
     define $\tilde{\mu}$ on Borel($\X$) by
     $$
     \tilde{\mu}(f) := \lim_n \mu((f\circ \T^n)^*),
     \qquad \forall f\in C^0(\X,\re). 
     $$
     Then $\tilde{\mu}$ is $\T$-invariant and $(\pi_0)_*(\tilde{\mu})=\mu$.
     
     Suppose that $\nu$ is an ergodic maximizing measure for $F\in\Lip(X,\re)$
     and let $\tnu$ be its invariant lift to $\X$ as defined above.
     The measure $\tnu$ is $\T$-invariant and thus also $\T^{-1}$-invariant. 
     Then $\supp(\tnu)$ is $\T^{-1}$-invariant.
     Let $\Y$ be the 
     set of $\T^{-1}$-recurrent points in $\supp(\tnu)$ and $Y:=\pi_0(\Y)$.
     Then $\nu(Y)=\tnu(\pi_0^{-1}(Y))\ge \tnu(\Y)=1$.
     If $y\in Y$ then there is $\ty\in\pi_0^{-1}(y)\in\Y$ 
     such that $\ty$ is $\T^{-1}$-recurrent, i.e. $\ty\in\om\text{-lim}(\ty,\T^{-1})$.
     We have that $\ty=\{y_n\}_{n\in\na}$ is a pre-orbit of $T$ in $\supp(\nu)$
     with $y_0=y$ and $y\in \a\text{-lim}(\{y_n\}_{n\in\na})$.
     
     Let $u$ be any calibrated sub-action. Let $\oF$ be defined by \eqref{ovF}.
     By Lemma~\ref{LoF}-2.(iii) we have that 
     $\{y_n\}_{n\in\na}\subset\supp(\nu)\subset [\oF=0]$.
     Thus by the remark in~\eqref{cal2} the pre-orbit $\{y_n\}_{n\in\na}$ calibrates $u$.
      \end{proof}

%     The following construction is called the Busemann calibrated
%     subaction. If $x\in X$ and $n\in\na$ is large enough, \todo{large enough}
%     define
%     \begin{gather*}
%     S_n(\ty,x):=\sup\Big\{\sum_{k=1}^\infty F(x_k)\;\Big|\;
%     \{x_n\}_{n\in\na}\in\X,\; x_0=x, \; \exists \ell\in\Z
%     \; \forall k\ge n\;\;
%     d(x_{k+\ell},y_k)\le e_0\;\Big\},
%     \end{gather*}
%     $$
%     u(x):= \limsup_n S_n(\ty,x).
%     $$
%     One can see that
%     \begin{align*}
%     S_n(\ty,x) + F(x) &\le  S_{n}(\ty,Tx), \\
%     u(x)+F(x) &\le u(T(x)). \\
%     \intertext{Therefore}
%      \cL_F(u)&\le u.
%     \end{align*}
%     On the other hand, let $n_k\uparrow+\infty$ be such that
%     $$
%     u(x)=\lim_k S_{n_k}(\ty,x)
%     $$
%     Let $(x^k_j)_{j\in\na}\in\X$ and $\ell_k\in\Z$ be such that
%     $x^k_0=x$, $\forall j\ge n_k$ $d(x^k_{j+\ell_k},y_j)\le e_0$
%     and
%     \begin{align*}
%     -\tfrac 1k+S_{n_k}(\ty,x)  &\le\sum_{j=1}^\infty F(x^k_j) 
%     =\sum_{j=2}^\infty F(x^k_j)+F(x^k_1)
%     \\
%     &\le S_{n_k}(\ty,x_1^k)+F(x^k_1).
%     \end{align*}
%     Choose a subsequence $\{n_k\}$ 
%     such that $x^k_1=x^\a_1$ is constant on $k$.
%     Then
%     \begin{align*}
%     \lim_k S_{n_k}(\ty,x) &\le \limsup_k S_{n_k}(\ty,x) + F(x^\a_1).
%     \\
%     u(x) &\le u(x^\a_1)+ F(x^\a_1)
%     \\
%     &\le \sup_{T(z)=x} \{u(z)+F(z)\}
%     =\cL_Fu(x).
%     \end{align*}
%     Therefore $\cL_F(u)=u$.
%     

    \vskip .7cm

   We say that a sequence $(x_n)_{n\in\na}\subset X$ is a {\it $\de$-pseudo-orbit}
   if $d(x_{n+1},\t(x_n)) \le \de$, $\forall n\in \na$.

  We say that the orbit of $y$ {\it $\e$-shadows} a pseudo-orbit  
  $(x_n)_{n\in\na}$ if $\forall n\in\na$, $d(\t^n(y),x_n)<\e$.

   \begin{Proposition}[Shadowing Lemma]\label{shadowing}\quad
   
   If $(x_k)_{k\in\na}$ is a $\de$-pseudo-orbit  with $\de<(1-\la) e_0$ %$\frac{\de}{1-\la}<e_0$ 
   then there is $y\in X$ whose 
   orbit 
   \linebreak
   $\e$-shadows $(x_k)_{k\in\na}$ with $\e=\frac \de{1-\la}$.
   If $(x_k)_{k\in\na}$ is a periodic pseudo-orbit then $y$ is a periodic orbit with the same period.
   
   \end{Proposition}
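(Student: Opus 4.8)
The plan is to realize $y$ as a limit of points obtained by pulling the pseudo-orbit back along inverse branches of $\t$, and to control the shadowing error by a geometric series of ratio $\la$. First, for each $k\ge 0$ I select an inverse branch: since $d(\t(x_k),x_{k+1})\le\de<e_0$, the point $\t(x_k)$ lies in $B(x_{k+1},e_0)$, so $x_k\in\t^{-1}(B(x_{k+1},e_0))=\bigcup_i S_i(B(x_{k+1},e_0))$, and by disjointness of the images there is a unique branch $S_{k+1}\colon B(x_{k+1},e_0)\to X$ with $S_{k+1}(\t(x_k))=x_k$. The point to remember is that $S_{k+1}$ depends only on the pair $(x_k,x_{k+1})$, not on anything else.

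Next, for $N\ge 1$ and $0\le j\le N$ define $z^N_j:=S_{j+1}\circ S_{j+2}\circ\cdots\circ S_N(x_N)$ whenever this composition makes sense; then $z^N_N=x_N$, $\t(z^N_j)=z^N_{j+1}$, and the point $y_N:=z^N_0$ satisfies $\t^j(y_N)=z^N_j$. A downward induction on $j$, using that $S_{j+1}$ is a $\la$-contraction on $B(x_{j+1},e_0)$ and that $d(S_{j+1}(x_{j+1}),x_j)=d(S_{j+1}(x_{j+1}),S_{j+1}(\t(x_j)))\le\la\,d(x_{j+1},\t(x_j))\le\la\de$, should give
$$
d(z^N_j,x_j)\le\la\de\,(1+\la+\cdots+\la^{N-1-j})\le\frac{\la\de}{1-\la}<e_0 .
$$
The same induction simultaneously shows that each $z^N_j$ stays inside $B(x_j,e_0)$, so the branches can indeed be applied and the $z^N_j$ are well defined; this is the only place where the hypothesis $\de<(1-\la)e_0$ is used, and it is used precisely to get the uniform bound $\frac{\la\de}{1-\la}<e_0$.

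Now I show $(y_N)_N$ converges. Because $S_{k+1}$ depends only on $(x_k,x_{k+1})$, one has $y_{N+1}=(S_1\circ\cdots\circ S_N)\bigl(S_{N+1}(x_{N+1})\bigr)$ and $y_N=(S_1\circ\cdots\circ S_N)(x_N)$, with $d(S_{N+1}(x_{N+1}),x_N)\le\la\de$ as above; since $S_1\circ\cdots\circ S_N$ is a $\la^N$-contraction on the region containing the relevant points (all of which lie in the balls $B(x_j,e_0)$ by the previous estimate), $d(y_N,y_{N+1})\le\la^{N+1}\de$, so $(y_N)$ is Cauchy. As $X$ is compact it is complete, so $y_N\to y$ for some $y\in X$. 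For each fixed $j$, continuity of $\t^j$ gives $\t^j(y)=\lim_N\t^j(y_N)=\lim_N z^N_j$, and letting $N\to\infty$ in the displayed estimate yields $d(\t^j(y),x_j)\le\frac{\la\de}{1-\la}<\frac{\de}{1-\la}=\e$ for every $j$; hence the orbit of $y$ $\e$-shadows $(x_k)_k$, with the strict inequality required by the definition.

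Finally, if $(x_k)_k$ is periodic of period $p$, i.e.\ $x_{k+p}=x_k$ for all $k$, then $S_{k+p}=S_k$, since $S_{k+p}$ depends only on $(x_{k+p-1},x_{k+p})=(x_{k-1},x_k)$. Thus the first $p$ factors defining $y_{N+p}$ are $S_1\circ\cdots\circ S_p$, and since $\t\circ S_j=I$ on $B(x_j,e_0)$ and all the intermediate points $z^{N+p}_0,\dots,z^{N+p}_p$ lie in those balls, applying $\t^p$ simply undoes these $p$ branches:
$$
\t^p(y_{N+p})=z^{N+p}_p=S_{p+1}\circ\cdots\circ S_{N+p}(x_{N+p})=S_1\circ\cdots\circ S_N(x_N)=y_N ,
$$
using $S_{p+i}=S_i$ and $x_{N+p}=x_N$. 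Letting $N\to\infty$ and using continuity of $\t^p$ gives $\t^p(y)=y$, so $y$ is a periodic point whose period divides $p$ (in particular $p$ is a period of $y$), as asserted. (One may instead note that $G:=S_1\circ\cdots\circ S_p$ maps the complete closed ball $\ov B\bigl(x_0,\frac{\la\de}{1-\la}\bigr)$ into itself as a $\la^p$-contraction; its unique fixed point equals $\lim_m G^m(x_0)=\lim_m y_{mp}=y$, whence $\t^p(y)=y$.) The only genuinely non-routine part of all this is the bookkeeping in the middle two paragraphs — keeping every point produced by the partial compositions inside the ball $B(x_j,e_0)$ where the next inverse branch is defined and contracting, and checking that the branch chosen at each step is the same for all $N$ (and $p$-periodic in the periodic case) — and this is exactly what the quantitative hypothesis $\de<(1-\la)e_0$ secures.
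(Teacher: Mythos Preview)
Your proof is correct and follows essentially the same approach as the paper: select the inverse branch $S_k$ sending $T(x_k)$ back to $x_k$, use the geometric-series bound $\frac{\la\de}{1-\la}<e_0$ to keep all iterated pre-images inside the balls where the branches are defined and contracting, and obtain $y$ as the resulting limit. The only cosmetic difference is that the paper phrases the construction as a nested intersection $\bigcap_k S_0\circ\cdots\circ S_k\big(B(x_{k+1},a)\big)$ of compact sets with diameter $\le 2a\la^k$ (and then gets periodicity from the \emph{uniqueness} of the shadowing point), whereas you build $y$ as the limit of the Cauchy sequence $y_N=S_1\circ\cdots\circ S_N(x_N)$ and verify $T^p(y_{N+p})=y_N$ directly; these are two ways of saying the same thing.
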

   
   \begin{proof}
   Write $B(x,r):=\{\,z\in X\,|\, d(z,x)\le r\,\}$ and $a:=\frac{\la\, \de}{1-\la}$.
   Let $\ti_k$ be the branch of the inverse of $T$ such that 
   $\ti_k(T(x_k))=x_k$. Since $a+\de< e_0$, we have that 
   \begin{align*}
   \ti_k\big(B(x_{k+1},a)\big)\subseteq \ti_k\big(B(\t(x_k),a+\de)\big)
   \subseteq B\big(x_k,\la(a+\de)\big)=B(x_k,a).
   \end{align*}
   Let $y\in X$ be given by
   $$
   y\in \bigcap_{k=0}^\infty \ti_0\circ\cdots\circ\ti_k\big(B(x_{k+1},a)\big).
   $$
   The point $y$ exists and is unique because it is the intersection of a nested family of 
   non-empty compact sets with diameter smaller than $2a\la^k $. 
   We have that  $\t^k(y)\in B(x_k,a)$. Thus $y$ $a$-shadows $(x_k)$.
   Now suppose $(x_k)$ is $p$-periodic. Then also $\t^p(y)$  $a$-shadows $(x_k)$.
   The uniqueness of $y$ implies that $T^p(y)=y$.
    
   \end{proof}
   
   \begin{Corollary}\label{expansivity}\quad
   
   If $T^p(y)=y$ and $(z_k)_{k\le 0}$ is a pre-orbit which $(1-\la)e_0$-shadows the orbit $\cO(y)$
   of $y$,  i.e. $\forall k\le 0$,  $T(z_k)=z_{k+1}$ and 
   $d(z_k,T^{k\!\!\! \mod\! p}(y))<(1-\la) e_0$,
   then the $\a$-limit of $(z_k)$ is $\cO(y)$. 
   \end{Corollary}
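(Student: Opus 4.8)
The plan is to reduce the statement to the single estimate $d\big(z_k,T^{k\!\!\! \mod\! p}(y)\big)\to 0$ as $k\to-\infty$, and to prove that estimate by iterating a one-step $\la$-contraction. Write $q_k:=T^{k\!\!\! \mod\! p}(y)$, so that $\cO(y)=\{q_0,\dots,q_{p-1}\}$ is a finite set, $T(q_k)=q_{k+1}$ for every $k$, and the shadowing hypothesis reads $d(z_k,q_k)<(1-\la)\,e_0$ for all $k\le 0$. First I would record that once $d(z_k,q_k)\to 0$ is known the conclusion is immediate: every subsequential limit of $(z_k)$ as $k\to-\infty$ then lies in the closed set $\cO(y)$, while conversely each $q_j=T^j(y)$ is the limit of $z_{j-np}$ as $n\to\infty$, because $q_{j-np}=q_j$. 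Hence the $\a$-limit of $(z_k)$ equals $\cO(y)$, and the whole problem is the estimate $d(z_k,q_k)\to 0$.

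For the contraction step, fix $k\le-1$. Since $d(z_{k+1},q_{k+1})<(1-\la)e_0<e_0$, the point $q_{k+1}$ lies in the ball $B(z_{k+1},e_0)$, on which the branches of the inverse of $T$ are injective, $\la$-contracting, and have pairwise disjoint images. Let $S$ be the branch with $S(z_{k+1})=z_k$. The key point is the claim that $q_k$ lies in the image of this \emph{same} branch, i.e.\ $q_k=S(q_{k+1})$. Granting it,
\[
d(z_k,q_k)=d\big(S(z_{k+1}),S(q_{k+1})\big)\le \la\,d(z_{k+1},q_{k+1}),
\]
and iterating downward from $k=0$ gives $d(z_k,q_k)\le \la^{-k}\,d(z_0,q_0)<\la^{-k}(1-\la)e_0\to 0$ as $k\to-\infty$, which is exactly what is needed.

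It remains to justify the claim, and this is the only genuinely delicate point. Since $T(q_k)=q_{k+1}\in B(z_{k+1},e_0)$, the point $q_k$ lies in $T^{-1}\big(B(z_{k+1},e_0)\big)$, hence in the image of some inverse branch $S'$ on $B(z_{k+1},e_0)$; suppose $S'\ne S$. Then $q_k=S'(q_{k+1})$ and $S(q_{k+1})$ are two \emph{distinct} preimages of $q_{k+1}$ (they lie in disjoint branch images), while
\[
d\big(S(q_{k+1}),q_k\big)\le d\big(S(q_{k+1}),S(z_{k+1})\big)+d(z_k,q_k)\le \la\,d(z_{k+1},q_{k+1})+d(z_k,q_k)<(1-\la^2)e_0<e_0.
\]
This contradicts the separation of distinct preimages of a point of $X$ that follows from the disjointness of the inverse branches and the choice of $e_0$, and it is exactly the place where the shadowing radius $(1-\la)e_0$ is exploited, just as in the proof of the Shadowing Lemma (Proposition~\ref{shadowing}). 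Hence $S'=S$ and $q_k=S(q_{k+1})$. I expect the branch-consistency claim — and, inside it, verifying that the scale $(1-\la)e_0$ really lies below the preimage-separation threshold of $T$ — to be the main obstacle; once that is settled, the rest is the routine iteration above together with the remark that $\cO(y)$ is finite.
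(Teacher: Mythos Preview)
Your argument is correct and follows essentially the same route as the paper's: both rest on the contraction estimate $d(z_k,q_k)\le\la\,d(z_{k+1},q_{k+1})$ obtained by pulling back through a common inverse branch, which the paper compresses into the phrase ``the argument in Proposition~\ref{shadowing}''. You are somewhat more explicit---proving the full convergence $d(z_k,q_k)\to 0$ rather than first passing to a subsequence of constant residue mod $p$---and you have correctly isolated the branch-consistency step as the only delicate point.
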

   
   \begin{proof}
   Let $w\in\a\text{-lim}(z_k)$. Then there is a sequence $k_n\to -\infty$ such that
   $\lim_n z_{k_n}=w$. Extracting a subsequence if necessary, we may assume that
   $k_n\,(\text{mod}\,p)$ is constant. Then there is $\ell\in \Z_p$ such that 
   $d(z_{k_n},T^\ell(y))<(1-\la) e_0$ for all $n$. The argument in Proposition~\ref{shadowing}
   shows that $d(z_{k_n}, T^\ell(y))< \la^{k_n}\,e_0$. Therefore 
   $w=\lim_n z_{k_n}=T^\ell(y)\in\cO(y)$. It follows that $\a\text{-lim}(z_k)=\cO(y)$.
   
   ~\end{proof}
   
   We show now a condition which allows to obtain a perturbation with 
    maximizing measure supported on a periodic orbit. The argument appeared first in 
    Yuan and Hunt \cite{YH}. The proof below is a modification that we shall need
    of the arguments by Quas and Siefken \cite{QS} which we adapt to  pseudo-orbits.
  
  Let $y\in\Per(T)=\cup_{p\in\na^+}\Fix(T^p)$ be a periodic point for $T$. 
  Let $P_y$ be the set of Lipschitz functions 
  $F\in\Lip(X,\re)$ such that there is a unique $F$-maximizing measure and
  \noindent it is supported
  on the positive orbit of $y$. Let $\cU_y$
  be the interior of $P_y$ in
  $\Lip(X,\re)$.

  \begin{Proposition}\label{perturb}
  Let $F,u\in\Lip(X,\re)$ with $\cL_F(u)=u$ and let $\oF$ be defined by \eqref{ovF}.
  
  Suppose that there exists $M\in\na^+$ such that for every $Q>1$ and $\de_0>0$ 
  there exist $0<\de<\de_0$ and a $p(\de)$-periodic $\de$-pseudo-orbit $(x_k^\de)_k$ in $[\oF=0]$
   with at most $M$ jumps
  such that $\frac{\ga_\de}{\de}\ge Q$, where
  $\ga_\de:= \min_{0\le i<j< p(\de)} d(x_i^\de,x_j^\de)$.
  
  Then $F$ is in the closure of $\cup_{\text{$y$\,periodic}\;} \cU_y$.
  \end{Proposition}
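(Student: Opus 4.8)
The plan is to adapt the Yuan--Hunt / Quas--Siefken perturbation scheme to Lipschitz potentials and to pseudo-orbits with boundedly many jumps. First I would reduce to the normalized case $F=\oF$: the set $\bigcup_{y}\cU_y$ (hence its closure) is invariant under adding constants and coboundaries $\psi-\psi\circ\t$, because both operations preserve the ordering of integrals against invariant measures, hence the property defining $P_y$, and both are translations of $\Lip(X,\re)$, hence homeomorphisms preserving interiors. Taking the constant $\a(F)$ and $\psi=u$ shows $F\in\overline{\bigcup_y\cU_y}\iff\oF\in\overline{\bigcup_y\cU_y}$, so we may assume $F\le 0$, $\a(F)=0$, $\cL_F(0)=0$, and $[\,F=0\,]$ contains the pseudo-orbit. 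Fix $\e>0$; choose a small slope $\kappa>0$ depending only on $\e$ and $\diam X$ (so that $\kappa(1+\diam X)<\e$), and only afterwards take $Q$ large and $\de_0$ small, applying the hypothesis to get $\de<\de_0$, the period $p=p(\de)$, the $p$-periodic $\de$-pseudo-orbit $(x_k)\subset[F=0]$ with at most $M$ jumps, and $\ga=\ga_\de\ge Q\de$. By Proposition~\ref{shadowing} there is a $p$-periodic point $y$ with $d(T^ky,x_k)\le b:=\tfrac{\la\de}{1-\la}$ for all $k$; the orbit points $y_k:=T^ky$ are then $(\ga-2b)\ge\ga/2$-separated.

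The quantitative heart is an estimate that uses the bound $M$ to defeat the uncontrolled period $p$. Inside a genuine orbit segment of the pseudo-orbit one has $x_{j+1}=T(x_j)$, and with the branches $\cS_j$ from the proof of Proposition~\ref{shadowing} one gets $d(x_j,T^jy)\le\la\,d(x_{j+1},T^{j+1}y)$, so the shadowing distance decays geometrically towards the start of the segment; since $F(x_k)=0$ and there are at most $M$ segments,
$$
\tsum_{k=0}^{p-1}|F(y_k)|\ \le\ \frac{M\,\Lip(F)\,b}{1-\la}.
$$
Consequently both the deficit $-\tfrac1p\tsum_k F(y_k)$ of $\cO(y)$ and the oscillation on $\cO(y)$ of the Livšic-type primitive $w$ solving $w-w\circ T=\langle F\rangle-F$ along $\cO(y)$ are $O\!\big(M\Lip(F)b\big)$; as the $y_k$ are $\ga/2$-separated, $w$ has a Lipschitz extension to $X$ with $\Lip(w)=O\!\big(\tfrac{M\Lip(F)b}{\ga}\big)=O\!\big(\tfrac{M\Lip(F)}{Q}\big)$, negligible for $Q$ large. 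Now set $G:=F+\phi$ with $\phi(z):=-\min\{\kappa\,d(z,\cO(y)),h\}\le 0$, $h:=\kappa\rho$, $\rho\le\ga/8$; then $\|G-F\|\le h+\kappa\le\kappa(1+\diam X)<\e$, and $\phi$ vanishes exactly on $\cO(y)$. Combining $w$ with the calibration machinery of Proposition~\ref{wK} I would produce a Lipschitz sub-action $v$ for $G$ with $\oF$-analogue $\ov G:=G+\a(G)+v-v\circ T\le 0$, and verify the barrier estimate $\ov G(z)<0$ whenever $d(z,\cO(y))\ge r_0$, where $r_0=O\!\big(\tfrac{\Lip(F)b}{\kappa}\big)=O(\de)$: the outward drift of rate $\kappa$ supplied by the well beats the only competing gain, namely the $O(\Lip(F)b)=O(\de)$ by which $F$ can rise off $\cO(y)$ (because $F\le 0$ and $F$ is $O(\de)$-close to $0$ along the pseudo-orbit), outside the innermost $O(\de)$-ball, whose size is absorbed into $r_0$. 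Shrinking $\de_0$ gives $r_0<(1-\la)e_0$.

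With this in hand, $[\ov G=0]$ lies in the $r_0$-neighbourhood of $\cO(y)$; any pre-orbit contained there $(1-\la)e_0$-shadows $\cO(y)$, so Corollary~\ref{expansivity} gives $\a$-limit $\cO(y)$. Lifting an ergodic $G$-maximizing measure to the natural extension of Lemma~\ref{rp} and running the Poincaré-recurrence argument there shows its support is $\cO(y)$, so $G$ has a unique maximizing measure $\mu_y$, carried by $\cO(y)$. For openness, let $G'=G+\pi$ with $\|\pi\|$ below an explicit positive threshold (allowed to depend on the now-fixed $p,Q,\kappa$); replace $v$ by $v+w_\pi$, where $w_\pi$ solves $w_\pi-w_\pi\circ T=\langle\pi\rangle-\pi$ on the finite set $\cO(y)$ (Lipschitz, not necessarily small). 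The barrier estimate is perturbed by $O(\|\pi\|)$, so $[\ov{G'}=0]$ still lies within $(1-\la)e_0$ of $\cO(y)$, and the previous paragraph applies verbatim to $G'$. Hence $G\in\cU_y\subset\bigcup_{y'}\cU_{y'}$ with $\|G-F\|<\e$, proving $F\in\overline{\bigcup_{y'}\cU_{y'}}$.

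The step I expect to be the main obstacle is the barrier estimate: exhibiting one Lipschitz function of norm $<\e$ that simultaneously creates a uniform outward drift near $\cO(y)$ and forces $\ov G$ strictly negative outside an $O(\de)$-neighbourhood of $\cO(y)$, robustly under further small perturbations. The genuine tension is that $\Lip(\phi)$ must stay small while the pseudo-orbit is only $O(\de)$-close to a true orbit, so that $F$ itself may climb by $O(\Lip(F)\de)$ off $\cO(y)$ and the maximizing behaviour for $G$ need not localize to a single orbit a priori; resolving it relies on exactly the two inputs of the hypothesis — the large ratio $\ga/\de\ge Q$, which makes the auxiliary corrections $O(M/Q)$, and the bound $M$ on the number of jumps, which is precisely what renders $\tsum_k|F(y_k)|=O(M\Lip(F)b)$, and hence $r_0$, independent of the uncontrolled period $p$. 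Making precise that $[\ov G=0]$ (and not merely the averaged distance to $\cO(y)$) is confined to an $O(\de)$-ball is the one point that needs the pseudo-orbit/few-jumps structure in an essential way.
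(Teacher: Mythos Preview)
Your overall architecture matches the paper's: reduce to $\oF$, shadow the pseudo-orbit by a genuine periodic orbit $\cO(y)$, use the bound $M$ on jumps to get $\sum_k|\oF(y_k)|=O(M\Lip(\oF)\,\de)$ independently of $p$, add a Lipschitz well centred on $\cO(y)$, and conclude via Corollary~\ref{expansivity} and Lemma~\ref{rp}. The gap is precisely where you predict it.

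Your barrier estimate ``$\ov G(z)<0$ whenever $d(z,\cO(y))\ge r_0$'' cannot hold for a \emph{calibrated} sub-action $v$ of $G$: the equation $\cL_G(v)=v$ says every $x\in X$ has a preimage $z\in T^{-1}(x)$ with $\ov G(z)=0$, so $T([\ov G=0])=X$, and no small neighbourhood of a periodic orbit maps onto $X$. If instead you take $v=w$, the Liv\v{s}ic primitive extended off $\cO(y)$, then $w$ is not a sub-action at all. At $z$ with $d(z,y_k)=d$ small one computes $\ov G(z)\approx(\oF(z)-\oF(y_k))-\kappa d+O(\Lip(w))\,d$; taking $z=x_k\in[\oF=0]$ gives $\ov G(x_k)\approx -\oF(y_k)-\kappa d$, and $|\oF(y_k)|$ can be of order $\Lip(\oF)\,d$, which dominates $\kappa d$ since $\kappa<\e$ while $\Lip(\oF)$ is fixed. (Your ``competing gain is $O(\de)$'' is the bound $\oF(z)-\oF(y_k)\le|\oF(y_k)|\le O(\de)$, valid but only useful once $\kappa d\gg\de$; so $r_0=O(\de/\kappa)$, not $O(\de)$, and for $d<r_0$ the inequality $\ov G\le 0$ still fails.) Combining $w$ with a calibrated sub-action does not rescue this: adding a coboundary does not change $[\ov G=0]$.

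The paper never attempts a pointwise barrier. Taking $v$ calibrated for $G=\oF-\e\,d(\cdot,\cO(y))+h+\be$ (the uncapped well, with the arbitrary small $h$ carried through the whole estimate so that openness falls out directly), it follows an arbitrary calibrating pre-orbit $(z_k)_{k\le 0}$ and records the times $t_1>t_2>\cdots$ at which $d(z_{t_n},\cO(y))>\rho:=3K\de/\e$. The missing geometric ingredient is a Claim: if $z$ is $\rho$-close to $y_k$ and $w_1\in T^{-1}(z)$ is the preimage $\la\rho$-close to $y_{k-1}$, then every other preimage $w_2$ satisfies $d(w_2,\cO(y))\ge\ga_3\sim\ga/\Lip(T)$. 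Hence between consecutive $t_n$'s the pre-orbit $\rho$-shadows $\cO(y)$, and one compares $\sum G(z_k)$ over that segment to full loops on $\cO(y)$ (nonpositive average) plus at most $p-1$ leftover terms, each $\le K\de/p+2\lV h\rV_0$, plus a shadowing error $\le K\rho$; at $t_{n+1}$ the well contributes $-\e\ga_3$ (or $-\e\rho$ when $t_{n+1}=t_n-1$). Choosing $Q$ large forces $\e\ga_3>2K\de+K\rho$ and $\e\rho>2K\de$, so each block contributes at most a fixed negative amount; since $\sum_{k}G(z_k)\ge-2\lV v\rV_0$, there are only finitely many $t_n$. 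Thereafter the pre-orbit $\rho$-shadows $\cO(y)$ with $\rho<(1-\la)e_0$, and Corollary~\ref{expansivity} and Lemma~\ref{rp} apply. The point is that the argument is \emph{summed along orbits}, not pointwise: a calibrating pre-orbit may well visit $[\ov G=0]$ far from $\cO(y)$, but only finitely often.
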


  \begin{proof}
  Observe that fixing $u\in\Lip(X,\re)$, for any $H\in\Lip(X,\re)$ the functions $H$ 
  and $H+\a(F)+u-u\circ T$
  have the same maximizing measures. Therefore it is enough to prove that the function $\oF$
  is in the closure of $\cup_{\text{$y$\,periodic}\;} \cU_y$.
  
  Let $\e>0$. We will show a perturbation  of $F$ with Lipschitz norm smaller than $\e$ such
  that it has a unique maximizing measure supported on a periodic orbit. Moreover, we will
  exhibit a neighborhood of the perturbed function in which the same periodic orbit is the unique
  maximizing measure for all functions in the neighborhood.
  The neighborhood will depend on the periodic orbit.
  
  Let 
  \begin{align*}
  K&:=\max\left\{\frac{M\Lip(\oF)}{(1-\la)^2},\frac{\Lip(\oF)+2}{1-\la}\right\},\\
  \rho&:=\frac{3 K \de}\e,\\
  \ga_3& := \textstyle\frac 1{\Lip(T)}\left(\ga_\de- \frac{2\de}{1-\la}\right) - \la\rho.
  \end{align*}

  Assume that $\de$, $\ga_\de$  and $\frac \de{\ga_\de}$ 
  are so small that $\de$, $\rho$, $\ga_3$   are all positive,
    smaller than $(1-\la) e_0$
    and that
  \begin{align}
  2 K \de -\e \rho &=:-2b<0.
  \label{defb}
  \\
  2 K \de + K \rho - \e \ga_3 &=: -2a < 0.
  \label{defa}
  \end{align}
  
  Let $y$ be the $p$-periodic point which $\big(\frac\de{1-\la}\big)$-shadows $(x_k)$.
  Write $y_k:=\t^k(y)$ and 
  $$
  \cO(y)=\{ \t^i(y) \;|\; i= 0,\ldots, p-1\} =\{y_0,\ldots,y_{p-1}\}.
  $$
  For a function $G:X\to\re$ write
  $$
  \langle G\rangle (y) = \frac 1p \sum_{i=0}^{p-1} G(T^i(y)).
  $$
  Let $n_i$, $i=1,\ldots,\ell$, $\ell\le M$, be the jumps of $(x_k)$; i.e.
  $d\big(\t (x_k) , x_{k+1}\big) = 0$ if $k\in\{0,\ldots,p-1\}\setminus\{n_1,\ldots,n_\ell\}$.
  Using Proposition~\ref{shadowing}, we have that
  \begin{align*}
\left|\sum_{k=1+n_{i-1}}^{n_i}\oF(y_k)-\sum_{k=1+n_{i-1}}^{n_i}\oF(x_k)\right|
&\le \sum_{k=1+n_{i-1}}^{n_i} \Lip(\oF)\; d(y_k,x_k)
\le \sum_{k=1}^{n_i-n_{i-1}} \la^{k-1}\; \frac\de{1-\la}\;\Lip(\oF)\\
&\le \frac{\Lip(\oF)}{(1-\la)^2}\;\de.
  \end{align*}
  Thus
  $$
  \left|\sum_{k=0}^{p-1} \oF(y_k)-\sum_{k=0}^{p-1} \oF(x_k)\right|
  \le \frac{M\,\Lip(\oF)}{(1-\la)^2}\;\de.
  $$
  By hypothesis $\forall k$, $\oF(x_k)=0$, thus $\sum_0^{p-1} \oF(x_k)=0$. Therefore
  $$
  \sum_{k=0}^{p-1} \oF(y_k)\ge - \frac{M\,\Lip(\oF)}{(1-\la)^2}\;\de \ge -K \de,
  $$ 
  \begin{equation}\label{AF}
  \langle \oF\rangle(y) \ge - \frac{K\de}p.
  \end{equation}
  
  Observe that if $0\le i<j<p$,
  $$
  d(y_i,y_j)\ge-d(y_i,x_i)+d(x_i,x_j)-d(x_j,y_j)\ge\ga_\de-\frac{2\,\de}{1-\la}=:\ga_2.
  $$

  \begin{Claim}Assume that $d(z,y_k)\le \rho\ll e_0$. 
   Take $w_1\in \t^{-1}\{z\}$ such that
  $d(w_1,y_{k-1})<\la \rho$. If $w_2\in \t^{-1}\{z\}\setminus\{w_1\}$ then
  $$
  d(w_2,\cO(y)) \ge \ga_3:=\tfrac{\ga_2}{\Lip(T)}-\la \rho\gg \de.
  $$
 \end{Claim}

  {\it Proof:} Let $y_j\in\cO(y)$ be such that $d(w_2,\cO(y))=d(w_2,y_j)$. 
  
  Let $S$ be the branch of the inverse of $T$ such that $S(z)=w_1$.
  If $x,y \in B(z,e_0)$ then
  $$
  d\big(S(x),S(y)\big) \ge \Lip(T)^{-1} d\big(T(S(x)),T(S(y))\big)
  =\Lip(T)^{-1} d(x,y).
  $$
  This implies that $\la\ge \Lip(T)^{-1}$.
  We also get that $B(w_1,(\Lip T)^{-1}e_0)\subset S(B(z,e_0))$ and then $T$ is 
  injective in the ball $B(w_1, \Lip(T)^{-1} e_0)$. 
  In particular $d(w_2,w_1)\ge \Lip(T)^{-1} e_0$.
  
  If $j={k-1}$ then 
  \begin{align*}
  d(w_2,\cO(y))&=d(w_2,y_j)=d(w_2,y_{k-1})\ge d(w_2,w_1)-d(w_1,y_{k-1})
  \\
  &\ge \Lip(T)^{-1} e_0- \la\,\rho > \ga_3.
  \end{align*}

  If $j\ne k-1$ then
  $$
  \ga_2\le d(y_k,y_{j+1})\le d(y_k,z)+d(z,y_{j+1})\le \rho+\Lip(\t)\,d(w_2,y_j).
  $$ 
  $$
  d(w_2,y_j) \ge \tfrac{\ga_2}{\Lip(T)} - \tfrac{\rho}{\Lip(T)}
  \ge \tfrac{\ga_2}{\Lip(T)} - \la \rho.
  $$
  This proves the claim.

  Now we make two perturbations to $\oF$. The first perturbation 
  is the addition of $-\e g(x)$, where 
  $$
  g(x):=d(x,\cO(y)).
  $$
  This is a perturbation with 
  $$
  \lV \e g \rV_0\le \e \diam X,
   \qquad
   \Lip(\e g) = \e.
   $$
  The second is a perturbation by any function with
  \begin{equation}\label{h0}
  \lV h\rV_0 < \frac{ K \de}{2p},
  \qquad  \Lip(h)\le 1.
  \end{equation}
  This perturbation depends on $\cO(y)$, and in particular on its period $p$. 
  We shall prove that the function $G_1:= \oF-\e g+ h $
  has a unique maximizing measure supported on the periodic orbit $\cO(y)$.
  Since the set of such functions $G_1$  contains an open ball centered at $\oF-\e g$,
  this proves the proposition.
  
  Let 
  \begin{equation}\label{G}
  G = \oF -\e g + h + \be = G_1+\be,
  \end{equation}
  where 
  \begin{equation}\label{defbeta}
  \be = -\sup_{\mu\in\cM(\t)}\int (\oF-\e g +h) \;d\mu.
  \end{equation}
  It is enough to prove the claim for $G$ because 
  $G$ and $G_1$ have the same maximizing measures.
  
  Using \eqref{AF}, we have that  
  \begin{align}
  \be &\le -\langle \oF-\e g+h\rangle (y) 
  =- \langle \oF+h\rangle (y) 
  \notag\\
  &\le - \langle \oF\rangle (y)+\lV h\rV_0
  \notag\\
  &\le \frac{K\de}p +\lV h\rV_0
  \label{beta}
  \end{align}
  
  Let $v$ be a calibrated sub-action for $G$, $\cL_G(v)=v$.
  Given any $z\in X$ let $(z_k)_{k\le 0}$ be a pre-orbit of $z$ which calibrates $v$.
  Let $0>t_1>t_2>\cdots$ be the times on which $d(z_k,\cO(y))>\rho$.
  If $t_{n+1}<t_n-1$ there is $s_{n}\in\Z$ such that 
  the orbit segment $(z_k)_{k=t_{n+1}+1}^{t_n-1}$ $\rho$-shadows 
  $(y_{-i+s_{n}})_{i=t_n-t_{n+1}-1}^1$, thus
  $$
  d(z_{-i+t_{n}},y_{-i+s_{n}})\le \la^{i-1}\, \rho,
  \quad \forall n\in\na, \quad \forall i=1,\ldots, t_n-t_{n+1}-1.
  $$
  By the Claim, we have that
  \begin{equation}\label{tg3}
  t_{n+1}<t_n-1 	\qquad\then\qquad
  d(z_{t_{n+1}},\cO(y))\ge \ga_3.
  \end{equation}
  
  Since both terms in $\oF-\e g$ are non-positive, 
  from \eqref{G} and \eqref{beta} we obtain
  \begin{equation}\label{hb}
  G\le h+\beta \le  \frac{ K \de}p + 2 \lV h\rV_0.
  \end{equation}
  
  On a shadowing segment we have
  \begin{equation}\label{shadowbound}
  \lv \sum_{t_{n+1}+1}^{t_n-1} G(z_k) -\sum_{s_n-t_n+t_{n+1}+1}^{s_n-1}G(y_k)\rv
  \le \Lip(G) \sum_{i=0}^{+\infty} \la^i \, \rho
  \le \Lip(G) \;\frac\rho{1-\la}
  \le K \rho.
  \end{equation}
  Write
  $$
  t_n-t_{n+1}-1 = m p + r
  $$
  with $0\le r<p$ and separate the shadowing segment in $m$ loops along the orbit $\cO(y)$ and a
  residue with at most $p-1$ iterates. Using \eqref{hb} for $(p-1)$ times 
  and~\eqref{shadowbound}, we have that
  $$
  \sum_{t_{n+1}+1}^{t_n-1}G(z_k) 
  \le m p\; \langle G\rangle(y)+(p-1) \frac{K \de}p + 2 (p-1) \lV h \rV_0 + \Lip(G)\, \frac{\rho}{1-\la}.
  $$
  By the definition of $\be$ we have that $\langle G\rangle(y) \le 0$.
  Therefore
   \begin{equation}\label{SG}
  \sum_{t_{n+1}+1}^{t_n-1}G(z_k)
  \le(p-1) \frac{K \de}p + 2 (p-1) \lV h \rV_0 + K \rho.
  \end{equation}
 
  On the points $z_{t_n}$ we have that $d(z_{t_n},\cO(y))>\rho$. 
  Using \eqref{G}, \eqref{hb}, \eqref{h0} and \eqref{defb},
    \begin{align}\label{Gzt1}
  G(z_{t_n})\le \oF(z_{t_n}) -\e\,\rho +\lV h+\be\rV_0
  \le 0 -\e\,\rho +\frac{K\de}p +2\lV h \rV_0
  <-b<0.
  \end{align}
  In particular, this holds when $t_n=t_{n-1}-1$.
  
  When $t_{n+1}<t_{n}-1$,
  using \eqref{G}, \eqref{tg3} and \eqref{hb}, we have that
  \begin{equation}\label{Gzt}
  G(z_{t_{n+1}})\le 0 - \e\,\ga_3+\lV h+\be\rV_0\le -\e\,\ga_3 + \frac{K\de}p + 2 \lV h\rV_0.
  \end{equation}
  Thus, adding \eqref{SG} and \eqref{Gzt}, and using \eqref{h0} and \eqref{defa},
  \begin{equation}\label{Gtn}
  t_{n+1}<t_n-1
  \qquad\then\qquad
  \sum_{t_{n+1}}^{t_n-1}G(z_k)\le 2p\, \lV h\rV_0 + K \de +K \rho- \e \ga_3 < -a<0.
 \end{equation}

  From \eqref{defbeta} we have that  $\a(G)=0$. 
  Since by definition $(z_k)_{k\le 0}$ is a calibrating pre-orbit for $v$,
   as in \eqref{sumcal}, we have that 
  for all $k<0$, 
  \begin{equation}\label{calG}
  v(z) = v(z_k) + \sum_{i=k+1}^{-1} G(z_i).
  \end{equation}
   Since $v$ is finite, we get that
   $$
   \sum_{-\infty}^{-1}G(z_k) \ge - 2 \lV v\rV_0 > -\infty.
   $$
   From \eqref{Gzt1} and \eqref{Gtn} we obtain that the sequence $t_n$ is finite. 
   Since $\rho<(1-\la) e_0$, from Corollary~\ref{expansivity} 
   we get that
    every calibrating pre-orbit has $\a$-limit $\cO(y)$.
   By Lemma~\ref{rp},
   this implies that every maximizing measure for $G$ 
   has support on $\cO(y)$.

  \end{proof}

  \section{Proof of Theorem~\ref{ThmA}}\label{S2}
  
  \noindent{\bf Proof of theorem~\ref{ThmA}:}
  
  We prove that $\cO:=\bigcup_{y\in\Per(T)}\cU_y$ is open and dense.
  It is clearly open.
  
  Suppose, by contradiction,  that there is a non-empty open set 
  \begin{equation}\label{cU}
  \cW\subset \Lip(X,\re)
  \end{equation} which is 
  disjoint from  $\bigcup_{y\in\Per(T)}\cU_y$. 
  By Theorem~\ref{Tmorris} and Remark~\ref{mergodic}
  we can choose $F\in\cW$ such that 
  it has an ergodic maximizing measure $\mu$ with  entropy 
  \begin{equation}\label{0ent}
  h_\mu(T) = 0.
  \end{equation}
  By Lemma~\ref{LoF}-2.(iii) for any calibrating subaction $u$ for $F$, we have
    that $\supp(\mu)\subset [\oF=0]$,
    where $\ov{F}$ is from \eqref{ovF}.
  Let $q\in\supp(\mu)\subset[\oF=0]$ be a generic point for $\mu$, i.e.
  for any continuous function $f:X\to \re$,
  $$
  \int f \;d\mu = \langle f \rangle(q) = \lim_N \frac 1N \sum_{i=0}^{N-1} f(\t^i(q)).
  $$

  Since $F$ is not in the closure of $\bigcup_{y\in\Per(T)}\cU_y$,
  by Proposition~\ref{perturb} with $M=2$,
  we have the following 
  \begin{Statement}\quad\label{STM}
  
   There is $Q>1$ and $\de_0>0$ such that if 
  $0<\de<\de_0$ and
  $(x_k)_{k\ge0}\subset \cO(q)$
  is a $p$-periodic 
   $\de$-pseudo-orbit with at most 2 jumps
  made with elements of the positive orbit of $q$ 
  then $\ga= \min_{1\le i<j<p}d(x_i,x_j) <\frac 12 Q\de$.
 \end{Statement}

   Let $N_0$ be such that 
   \begin{equation}\label{N0}
   2\,Q^{-N_0}<\de_0.
   \end{equation}
   Fix a point $w\in \supp(\mu)$ for which Brin-Katok Theorem holds \cite{BK}, i.e.
    \begin{equation}\label{wBK}
    h_\mu(T) = -\lim_{L\to+\infty} \frac 1L \,\log\mu\big(V(w,L,\e)\big), 
    \end{equation}
    where $V(w,L,\e)$ is the {\it dynamic ball}:
    \begin{equation}\label{dynball}
   V(w,L,\e) := \big\{\,x\in X\;\big|\; d(T^kx,T^k w)<\e\,,\; \forall k=0,\ldots,L\,\big\}.
   \end{equation}
   
      Given $N>N_0$ let $0\le t_1^N<t_2^N<\cdots$ be all the $\frac 12 Q^{-N}$ returns to $w$, i.e.
  \begin{equation}\label{Nreturns}
  \{t_1^N,t_2^N,\ldots\} = \{ n\in\na\,|\,d(T^nq,w)\le   \tfrac 12 Q^{-N}\}.
  \end{equation}

  \bigskip
  \bigskip
  
  We need the following
  
  \begin{Proposition}\label{expo}
  For any $\ell\ge 0$, \quad
  $
  t_{\ell+1}^N-t_\ell^N \ge \sqrt{2}^{N-N_0-1}.
  $
  \end{Proposition}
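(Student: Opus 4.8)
\medskip

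Proposition~\ref{expo} is a statement about how slowly the orbit of $q$ returns to $w$, so my first move is to dispense with $w$. Since
$d\bigl(T^{t_\ell^N}q,T^{t_{\ell+1}^N}q\bigr)\le d\bigl(T^{t_\ell^N}q,w\bigr)+d\bigl(w,T^{t_{\ell+1}^N}q\bigr)\le Q^{-N}$,
the proposition follows from:

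\smallskip
\textsl{Claim.} For every integer $M\ge N_0+1$ and all integers $0\le a<b$ with $d(T^aq,T^bq)\le Q^{-M}$, one has $b-a\ge(\sqrt2)^{\,M-N_0-1}$.
\smallskip

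I would begin with two preliminary remarks. First, $\mu$ is not supported on a periodic orbit --- otherwise $F$ would belong to $\cU_y$ for the corresponding periodic point, contradicting $\cW\cap\bigcup_z\cU_z=\emptyset$. Hence the compact invariant set $\supp\mu$, which contains the whole orbit of $q$, is disjoint from the closed set $\Fix(T^2)$; enlarging $N_0$ I may assume $\tfrac{Q^{-N_0}}{1-\la}<\operatorname{dist}\bigl(\supp\mu,\Fix(T^2)\bigr)$. By the Shadowing Lemma~\ref{shadowing}, a close pair $(a,b)$ at any level $M>N_0$ with $b-a\le 2$ would be shadowed by a point of $\Fix(T^2)$ within $\tfrac{Q^{-M}}{1-\la}$ of $T^aq\in\supp\mu$, which is impossible; so $b-a\ge 3$ always, and in particular the Claim holds for $M\le N_0+2$.

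For $M\ge N_0+3$ I would argue by strong induction. Let $(a,b)$ be a close pair at level $M$ and set $s=b-a$, so $s\ge 3$. The sequence $x_k:=T^{a+(k\bmod s)}q$ is an $s$-periodic pseudo-orbit made of points of the orbit of $q$, with a single jump of size $d(T^bq,T^aq)\le Q^{-M}<\de_0$; so Statement~\ref{STM} yields $0<i<j<s$ with
\[
d\bigl(T^{a+i}q,\,T^{a+j}q\bigr)\ <\ \tfrac12\,Q\cdot Q^{-M}\ =\ \tfrac12\,Q^{-(M-1)}\ \le\ Q^{-(M-1)} .
\]
Thus $(a+i,a+j)$ is a close pair at level $M-1$, so $j-i\ge(\sqrt2)^{\,M-N_0-2}$ by induction. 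Now I excise the orbit arc strictly between $T^{a+i}q$ and $T^{a+j}q$: the sequence $T^aq,\dots,T^{a+i-1}q,T^{a+j}q,\dots,T^{a+s-1}q$ is again a periodic pseudo-orbit of points of the orbit of $q$, of period $p'=s-(j-i)$, now with \emph{at most two} jumps (of sizes $<\tfrac12 Q^{-(M-1)}$ and $\le Q^{-M}$, both $<Q^{-(M-1)}<\de_0$) --- exactly the generality Statement~\ref{STM} is stated in. Applying Statement~\ref{STM} to it produces a close pair at level $M-2$ inside this pseudo-orbit, whose time-gap $g$ satisfies $g\le p'-2$, so $g\ge(\sqrt2)^{\,M-N_0-3}$ by induction. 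Therefore
\[
b-a\ =\ (j-i)+p'\ \ge\ (j-i)+g+2\ \ge\ (\sqrt2)^{\,M-N_0-2}+(\sqrt2)^{\,M-N_0-3}+2 ,
\]
and since $\sqrt2+1>2$ this exceeds $(\sqrt2)^{\,M-N_0-3}(\sqrt2+1)>2\,(\sqrt2)^{\,M-N_0-3}=(\sqrt2)^{\,M-N_0-1}$, closing the induction.

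\textbf{Where the difficulty lies.} The arithmetic is trivial: the splitting $b-a=(j-i)+p'$ with the two summands controlled one and two levels down, plus $\sqrt2+1>2$. What requires care are the degenerate configurations, and this is precisely why Statement~\ref{STM} carries the factor $\tfrac12$ and the clause ``at most two jumps''. One must check that each auxiliary sequence really is a $\de$-pseudo-orbit with $\de<\de_0$, built from the orbit of $q$, with at most two jumps; one must dispose of the short cases $b-a\le 2$ and $p'=2$ (i.e.\ $i=1$, $j=s-1$) by the shadowing argument above; and one must treat the case in which the level-$(M-2)$ pair obtained in the excised pseudo-orbit \emph{straddles} the two surviving arcs, so that its time-gap need not be $\le p'-2$, by re-running the step on the (strictly longer) new close pair. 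Getting this descent to terminate with the stated exponential bound, uniformly in $M$, is the step I expect to be the most delicate.
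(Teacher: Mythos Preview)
Your recursive-splitting idea is exactly the mechanism behind the paper's proof, and your reduction to the Claim and the base case via shadowing against $\Fix(T^2)$ are both fine. The difficulty you flag in the last paragraph, however, is real and is not resolved by the fix you sketch. In your main line you write ``$g\le p'-2$, so $g\ge(\sqrt2)^{M-N_0-3}$ by induction'', but the inductive hypothesis bounds the \emph{original-orbit} gap of the level-$(M-2)$ pair, and this equals the pseudo-orbit gap $g$ only when the pair lies in a single arc; in the straddling case the original-orbit gap is $g+(j-i)$, which gives no lower bound on $p'$. Your proposal to ``re-run on the strictly longer new close pair'' replaces the level-$(M-1)$ excision by a level-$(M-2)$ one; iterating, after $r$ consecutive straddles you are comparing $f(M-1)+f(M-1-r)$ (plus small additive constants) with $f(M)$, and for $f(K)=(\sqrt2)^{K-N_0-1}$ this fails once $r\ge 2$ and $M$ is large. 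So the descent you describe does not close uniformly in $M$.

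The paper handles precisely this obstruction by abandoning the linear two-term recursion and instead building a \emph{tree} of returns: at each node the return cuts the current pseudo-orbit into two pieces and Statement~\ref{STM} is applied to \emph{both} sides, not just the complement. The bookkeeping datum at a node is the pair of counts of earlier jumps on each side; these can only be $(0,0)$, $(0,1)$, $(1,1)$ or $(0,2)$. The first three always yield two children. The $(0,2)$ case --- which is the tree-analogue of your straddling configuration --- yields only one child, but that child has numbers $(0,0)$ or $(0,1)$ and hence two grandchildren. Thus every node has at least two grandchildren, the tree at least doubles every two levels, and counting nodes gives $t^N_{\ell+1}-t^N_\ell\ge 2^{(N-N_0-1)/2}$. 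To complete your argument you would need either this two-sided branching or an explicit inequality showing that repeated straddles force the complementary segment to be long; neither is present in the proposal.
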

 
  Using Proposition~\ref{expo} we continue the proof of Theorem~{A}.
 
    Write
    $$
    B(w,r):= \{\,x\in X\;|\; d(x,w) \le r\,\}.
    $$
    Given $N\gg N_0$, let $f_N:X\to\re$ be a continuous function such that
    $0\le f\le 1$, $f|_{B(w,\frac 12 Q^{-N-1})}\equiv 1$ and 
    $\supp(f)\subseteq B(w,\frac 12 Q^{-N})$.
    Using that $q$ is a generic point for $\mu$ and Proposition~\ref{expo}, we have that 
    \begin{align}
    \mu\big(B(w,\tfrac 12 Q^{-N-1})\big) & \le \int f_N\;d\mu
    = \lim_{L\to+\infty}\frac 1L \sum_{i=0}^{L-1}f_N(T^iq) 
    \notag\\
    &\le  \lim_{L\to+\infty}\frac 1L \;\#\Big\{\, 0\le i<L \;\Big|\; d(T^iq,w)\le \tfrac 12 Q^{-N}\,\Big\}
    \notag\\
    &\le  \lim_{L\to+\infty}\frac 1L \; \#\Big\{\,\ell\;\Big|\;t^N_\ell\le L\;\Big\}
    \notag\\
    &\le \sqrt{2}^{-N+N_0+1}.
    \label{muB}
    \end{align}
   
   Recall that  the dynamic ball about $w$ is
   $$
   V(w,L,\e) := \big\{\,x\in X\;\big|\; d(T^kx,T^k w)<\e\,,\; \forall k=0,\ldots,L\,\big\}.
   $$
   We have that 
   $$
   V(w,L,\e) = S_1\circ\cdots\circ S_L\big(B(T^Lw,\e)\big),
   $$
   where $S_k$ is the branch of the inverse of $T$ such  that
   $S_k(T^kw)=T^{k-1}w$. Therefore
   $$
   V(w,L,\e)\subseteq B(w,\la^L\e).
   $$
   Let $N$ be such that
   $$
   \tfrac 12 Q^{-N-2}\le\la^L\e\le\tfrac 12  Q^{-N-1}.
   $$
   Then
   $$
   -N\le L\,\frac{\log \la}{\log Q} + \frac{\log(2\e)}{\log Q} +2.
   $$
   Using \eqref{muB}, we have that
   \begin{align*}
   \mu\big(V(w,L,\e)\big) \le \mu\big(B(w,\la^L\e)\big)
   \le \mu\big(B(w,\tfrac 12Q^{-N-1})\big) 
   \le \sqrt{2}^{-N+N_0+1}.
  \end{align*}
  \begin{align*}
  \frac 1L \, \log \mu\big(V(w,L,\e)\big)
  &\le \frac 1L\,\big(\log\sqrt{2} \big)\big(-N+N_0+1)
  \\
  &\le \frac{\log \la}{\log Q}\,\log\sqrt{2}
  +\frac 1L\,\big(\log\sqrt{2} \big)\left(2+\frac{\log(2\e)}{\log Q}+N_0+1\right).
  \end{align*}
  
  By Brin-Katok Theorem \cite{BK} and the choice of $w$ in \eqref{wBK},
   we have that
  \begin{equation*}
  h_\mu(T) = 
  -\lim_{L\to+\infty}\frac 1L\,\log\mu\big(V(w,L,\e)\big)
  \ge \frac{\log\la^{-1}}{\log Q}\,\log\sqrt{2} >0.
  \end{equation*}
  This contradicts the choice of $F$ and $\mu$ in \eqref{0ent}.
  Therefore such non-empty open set $\cW$ in \eqref{cU} does not exist.
  This implies that the (open) set $\cO=\bigcup_{y\in\Per(T)}\cU_y$
  is dense.
 \qed

  \bigskip
 \bigskip

  Now we prove

  \addtocounter{Thm}{-1}
  \begin{Proposition}
  For any $\ell\ge 0$, \quad
  $
  t_{\ell+1}^N-t_\ell^N \ge \sqrt{2}^{N-N_0-1}.
  $
  \end{Proposition}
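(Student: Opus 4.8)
The plan is to prove the statement by induction on $N$. The base case $N=N_0+1$ is immediate: two distinct returns occur at distinct integer times, hence differ by at least $1=\sqrt2^{\,0}$.

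For the inductive step fix $\ell$, set $m:=t_\ell^N$, $m':=t_{\ell+1}^N$, $p:=m'-m$, and view the orbit arc $(T^mq,T^{m+1}q,\dots,T^{m'-1}q)$ as a periodic $Q^{-N}$-pseudo-orbit $\Pi$ in $\cO(q)$ with a single jump, at the wrap-around, whose size $d(T^{m'}q,T^mq)\le d(T^{m'}q,w)+d(w,T^mq)\le Q^{-N}$ is admissible; since $N>N_0$ we have $Q^{-N}<\delta_0$, so Statement~\ref{STM} applies to $\Pi$ and yields indices $0<a<b<p$ with $d(T^{m+a}q,T^{m+b}q)<\tfrac12 Q^{-(N-1)}$. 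Now distinguish two cases. If some $T^{m+j}q$ with $0<j<p$ lies in $B(w,\tfrac12 Q^{-(N-1)})$, then $m$, $m+j$ and $m'$ are three level-$(N-1)$ returns, so $[m,m']$ contains at least two gaps between consecutive level-$(N-1)$ returns; by the inductive hypothesis each is $\ge\sqrt2^{\,N-N_0-2}$, whence $p\ge 2\sqrt2^{\,N-N_0-2}=\sqrt2^{\,N-N_0}$, which is even stronger than needed.

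If no such intermediate return exists, use the close pair from Statement~\ref{STM} to cut $\Pi$ into two shorter loops: the inner loop $(T^{m+a}q,\dots,T^{m+b-1}q)$, closed by the jump from $T^{m+b-1}q$ to $T^{m+a}q$, and the outer loop $(T^mq,\dots,T^{m+a-1}q,T^{m+b}q,\dots,T^{m'-1}q)$, closed by that jump together with the original wrap-around. Assuming, as we may, $Q\ge 2$, all jumps so created have size $\le Q^{-(N-1)}$, so each loop is a periodic $Q^{-(N-1)}$-pseudo-orbit in $\cO(q)$ with at most two jumps, of period $<p$, the two periods summing to $p$. To bound them from below one runs the same cutting procedure on the auxiliary quantity $\Psi_j:=$ the least period of an at-most-two-jump periodic $Q^{-j}$-pseudo-orbit in $\cO(q)$, proving $\Psi_j\ge\sqrt2^{\,j-N_0-1}$ for $j\ge N_0+1$ by induction on $j$: the base values $\Psi_{N_0+1}\ge1$ and $\Psi_{N_0+2}\ge2$ are free — the latter because for period $\le2$ the quantity $\gamma$ of Statement~\ref{STM} is an infimum over the empty set, so such a pseudo-orbit would contradict Statement~\ref{STM} — and $\min_\ell(t_{\ell+1}^N-t_\ell^N)\ge\Psi_N$ since every gap arc is such a pseudo-orbit. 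Feeding $\Psi_{N-1}\ge\sqrt2^{\,N-N_0-2}$ back in, each of the two loops has period $\ge\sqrt2^{\,N-N_0-2}$, so $p\ge2\sqrt2^{\,N-N_0-2}=\sqrt2^{\,N-N_0}\ge\sqrt2^{\,N-N_0-1}$.

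The technical heart of the argument is the bookkeeping in the cutting step: one must verify that slicing an at-most-two-jump loop at the two points supplied by Statement~\ref{STM} produces pieces that are again at-most-two-jump loops and that the new closing jumps have size $\le Q^{-(j-1)}$. Cutting a genuinely two-jump loop can a priori create a three-jump piece; this is circumvented by noting that in that case the two cut points must lie in a common orbit segment of the loop, which hands back a genuine near-recurrence of $\cO(q)$ and hence a lower bound on the length of that segment directly. It is exactly the one-level loss of scale $Q^{-j}\to Q^{-(j-1)}$ forced here — at worst the Fibonacci-type recursion $\Psi_N\ge\Psi_{N-1}+\Psi_{N-2}$, with growth rate $\tfrac12(1+\sqrt5)>\sqrt2$ — that explains the exponent base $\sqrt2$ rather than $2$.
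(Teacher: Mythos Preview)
Your overall strategy matches the paper's: iteratively cut periodic pseudo-orbits at the close return supplied by Statement~\ref{STM}, losing one level of scale at each step, and count the resulting pieces. The paper packages this as a tree whose nodes are the successive returns; you package it as an induction on $N$ together with the auxiliary quantity $\Psi_j$. These are equivalent viewpoints.

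The genuine gap is your handling of the three-jump case. When you cut a two-jump loop at the return $(a,b)$, the original two jumps may both land in the same piece, producing one sub-loop with one jump and one with three; Statement~\ref{STM} (with $M=2$) says nothing about the three-jump piece. You observe that in this situation the cut points lie in a single orbit segment and that this bounds ``that segment'' --- but that only bounds the one-jump piece by $\Psi_{N-1}$; the three-jump piece is left entirely unbounded. Your asserted Fibonacci recursion $\Psi_N\ge\Psi_{N-1}+\Psi_{N-2}$ therefore has no justification for the $\Psi_{N-2}$ term: nothing you have written forces the three-jump piece to have period $\ge\Psi_{N-2}$. As stated, the worst case of your argument gives only $\Psi_N\ge\Psi_{N-1}+1$, which is linear, not exponential.

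The paper closes this gap differently: it simply abandons the three-jump piece and instead cuts the one-jump piece once more. A one-jump loop always splits into two sub-loops with at most two jumps each, so the ``bad'' node (the paper's $(0,2)$ node) still has two grandchildren two levels down. In your notation this yields $\Psi_N\ge 2\Psi_{N-2}$ in the worst case, which together with the base values gives $\Psi_N\ge\sqrt{2}^{\,N-N_0-1}$. Your proof becomes correct if you replace the unjustified Fibonacci step by this observation; as written, it does not close.

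Two minor points: the return indices from Statement~\ref{STM2} may have $a=0$ or $b=p$ (just not both), so ``$0<a<b<p$'' is slightly too strong; and your Case~1 is actually subsumed by the $\Psi$-induction once it is set up correctly, so the case split is unnecessary.
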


\medskip
 
 \noindent {\bf Proof:}
  For $N\in \na$, let 
  \begin{align*}
  \A_N &:= \{(x,y)\in X\times X\;|\; d(x,y) \le Q^{-N}\}.
 \end{align*}
  From~\eqref{N0} and Statement~\ref{STM}, we get
  
  \begin{Statement}\label{STM2}\quad
  
  If $N>N_0$ and  $(x_k)_{k=0}^{p-1}$
   is a $p$-periodic  $Q^{-N}$ pseudo-orbit 
  in $\cO(q)$ with at most $2$ jumps, then there
  is a $\frac 12 Q^{-N+1}$-return $d(x_i,x_j)<\frac 12 Q^{-N+1}$ with 
  $0\le i<j< p$. 
  In particular $(x_i,x_j)\in \A_{N-1}$
  \end{Statement}

  Write $q_i:=T^i(q)$. From~\eqref{Nreturns}, the sequence 
  $(q_k)_{k=t_\ell^N}^{t^N_{\ell+1}-1}$
  is a periodic $Q^{-N}$ pseudo-orbit in $\cO(q)$ with 1 jump.
  Therefore there is a $Q^{-N+1}$-return $d(q_i,q_j)< \tfrac 12 Q^{-N+1}\le Q^{-N+1}$  with 
  \linebreak
  $t^N_\ell\le i<j< t^N_{\ell+1}$.
   This gives rise to two $Q^{-N+1}$ periodic pseudo-orbits in $\cO(q)$ with at most 2 jumps.
   Namely, $(q_i,\ldots,q_{j-1})$ and $(q_j,\ldots, q_{t^N_{\ell+1}-1},q_{t^N_\ell},\ldots,q_{i-1})$. 
   Each of them imply a  $Q^{-N+2}$ approach... This process will continue as long 
   as $N\ge N_0$.
 
         \begin{figure}[h]
     \resizebox*{14cm}{4cm}{\includegraphics{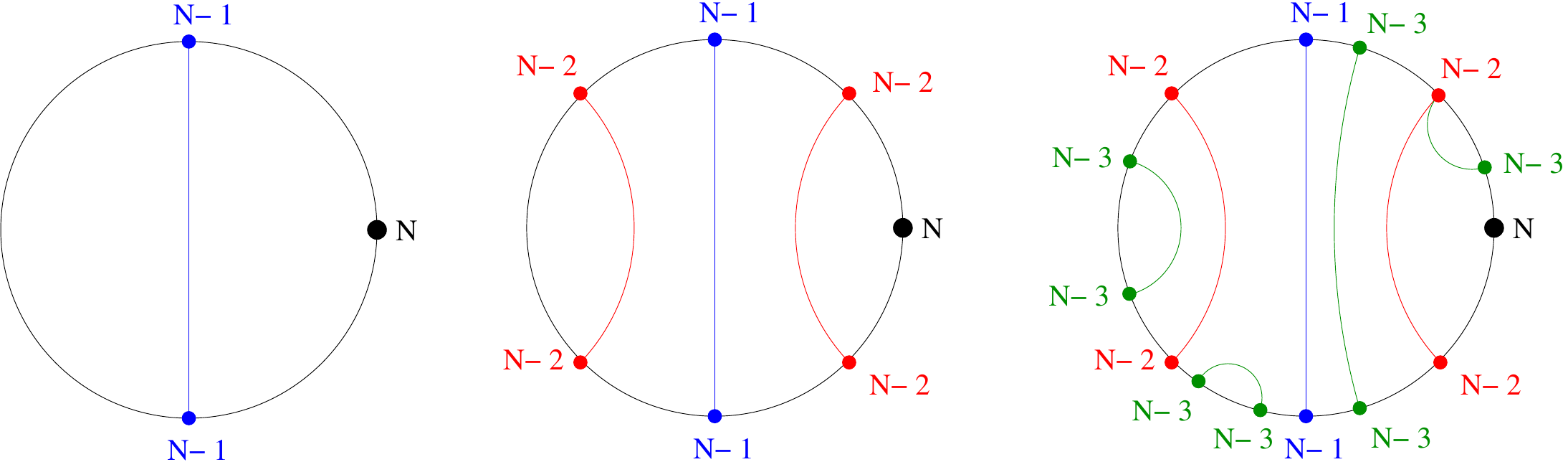}} 
     \caption{The disc $\D$, the circle $\SS=\partial\D$ and an example of a cascade of returns implied 
     by the inductive process.}\label{ex0}
\end{figure}

   It is simpler to show the inductive process in a picture. 
   Draw a circle $\SS$ with the elements of the pseudo-orbit
   $(q_k)_{k=t^N_\ell}^{t^N_{\ell+1}-1}$. Consider a disc $\D$
   with boundary $\partial\D=\SS$.
   Inside the disc $\D$, draw a line from $q_i$ to $q_j$.
   It may be that $q_i=q_{t^N_\ell}$ but in that case $q_j\ne q_{t^N_{\ell+1}}$.
      The line $\ell_1=\ov{q_i q_j}$ separates the disk in two components.
   Each component is a $Q^{-N+1}$ pseudo-orbit with at most two jumps
(one jump of size $\le Q^{-N+1}$ and possibly another with size $\le Q^{-N}<Q^{-N+1}$). 
 Thus, each component has at least one 
   $Q^{-N+2}$ return\,$\ldots$ 
   The interior of the lines in this construction do not intersect.

   We will also draw a tree with the returns, in order to see that their number grows exponentially.
   An example appears in figure~\ref{ex1}.
   The nodes of the tree are the returns implied by Statement~\ref{STM2}.
   The height
    of the node bounds the size of the return. The numbers near a node
   are the quantity of returns in upper levels of the tree which are adjacent to the return of the node, 
   either at its left or at its right. These numbers are also equal to $-1$ +the quantity of jumps
   of the two new periodic pseudo-orbits determined by the node. 
   
      \begin{figure}[h]
     \resizebox*{6cm}{4cm}{\includegraphics{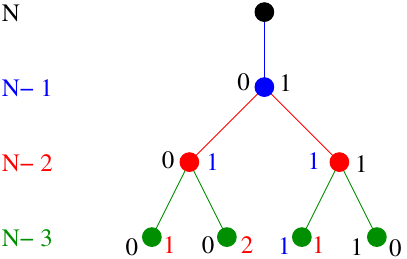}} 
     \hskip 1cm
 \resizebox*{6cm}{5cm}{\includegraphics{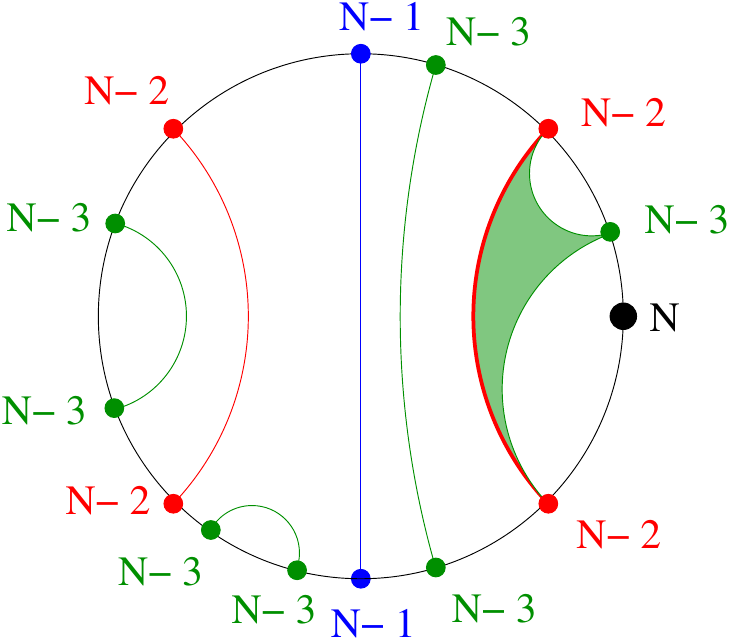}}
  \caption{An example of a distribution of returns implied by Statement~\ref{STM2}
   and the tree representing it. 
   The shadow is explained in \S\ref{c101} and in Figure~\ref{shadow}.}\label{ex1}
\end{figure}
We show how the tree is constructed in the example of figure~\ref{ex1}.
 We begin with a return in $\A_N$. This gives a periodic $Q^{-N}$ pseudo-orbit with no other jump.
 It implies the existence of a return in $\A_{N-1}$. In the tree we draw a vertical line from level $N$ to 
 level $N-1$. At this stage, the line in the circle corresponding to the $\A_{N-1}$ return 
 divides the disk in two components. One side has 1 return in $\A_N$ that appears 
 in a previous level in the tree and the other side has 0 returns appearing above in the tree.
 We write the numbers 0 and 1 at the sides of the node of the tree corresponding 
 to the $\A_{N-1}$ return. The $\A_{N-1}$ return divides the circle in two components.
 The component at the left is a periodic $Q^{-N+1}$ 
 pseudo-orbit with only one $Q^{-N+1}$ jump, 
 corresponding to the number 0 in the tree. The component at the right is a $Q^{-N+1}$ pseudo-orbit 
 with a $Q^{-N-1}$
 jump and also a $Q^{-N}$ jump, and corresponds to the number 1 in the tree in the node
 at level $N-1$.

  Statement~\ref{STM2}
  implies the existence of other returns in $\A_{N-2}$ for both pseudo-orbits.
 In the right hand side of figure~\ref{ex1} we draw the case in which the pseudo-orbit 
 segment between the $\A_{N-2}$ return contains a $Q^{-N}$ jump.
 Cutting the $Q^{-N+1}$ pseudo-orbit of the right hand side of the circle at the $\A_{N-2}$ return
 we obtain two $Q^{-N+2}$ periodic pseudo-orbits. 
 The one at the right has a {\color{black}$Q^{-N}$} 
 jump which appears previously in the tree and the
 one at the left has a {\color{blue}$Q^{-N+1}$} jump appearing previously in the tree. 
 We write the numbers {\color{blue} 1} and {\color{black} 1} in the corresponding 
 node of the tree.
 
    \begin{figure}[h]
     \resizebox*{6cm}{3.5cm}{\includegraphics{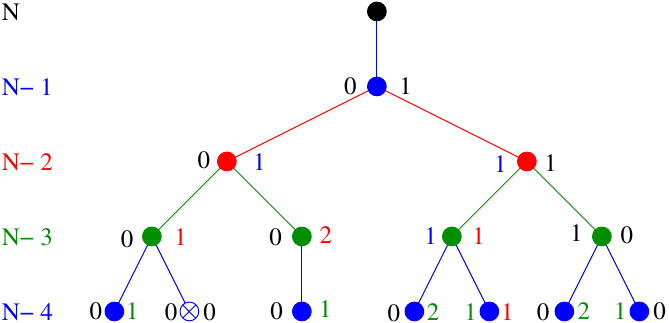}}
      \resizebox*{6cm}{4.9cm}{\includegraphics{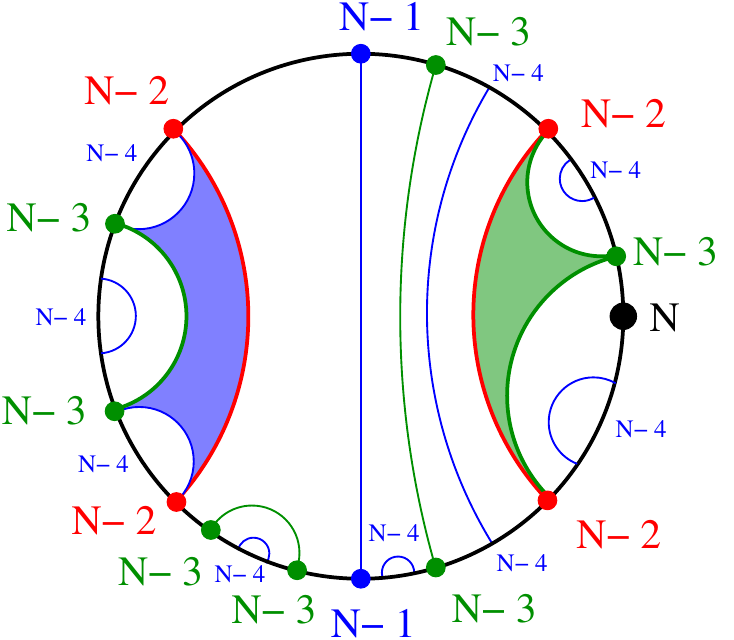}} 
      \caption{\small This is a possible next step from the example in Figure~\ref{ex1}.
      At level ${\color{dgreen}N-3}$ we had a node ${\color{black}0}{\color{dgreen}\bullet}{\color{red} 2}$ 
      which only issues one branch with  label $0$. At level ${\color{blue}N-4}$ the node {\color{blue}$0\otimes 0$ }
      corresponds to the shadowed region on the left of the disc. 
      This node comes from a branch with label $\color{red} 1$, i.e. a periodic specification with $2$ jumps. 
      In this case the implied return in $\color{blue}\A_{N-4}$ has both of its points at  the jumps of
      the specification. We put a white (or empty) node $\color{blue}\otimes$ in the tree, signifying that this 
      node (approach) does not count as a new point in the tree, i.e. as another point in the pseudo-orbit
      $(q_{t^N_\ell},\ldots,q_{t^N_{\ell+1}-1})$ which was not accounted for earlier.
      We show that in this case both jumps of the specification give two approaches
      which issue two periodic specifications with only one jump.
       We write the labels $\color{blue}0$ in the node $\color{blue}0\otimes 0$
      meaning that both implied specifications have only one jump. 
      The node $\color{blue}0\otimes 0$ will issue two branches 
      (with label $0$). We shadow the cuadrilateral region at the left to be not considered later. 
      After drawing the shadow there remain two white regions in the disc
      which give two ${\color{blue} Q^{-N+4}}$ periodic specifications  with only one jump that will restore the duplication process.}
      \label{ex1a}
     \end{figure}

We will  provide the tree with black  nodes $\bullet$ and white (or empty) nodes $\otimes$.
The nodes in the tree are associated to the approaches implied by the process. A black node 
means that at least one of the points in the approach is a point in the pseudo-orbit
$(q_{t^N_\ell},\ldots,q_{t^N_{\ell+1}-1})$ which
did not appear in the previous approaches.
So that we have
 \begin{equation}\label{nodes}
 t^N_{\ell+1}-t^N_\ell \ge \#\{\text{black nodes}\}.
 \end{equation}
 The branches of the tree correspond to the new periodic pseudo-orbits implied 
 by the approach at the node which issues the branches. The numbers at the 
 node are associated to the branches issued by the node.
 The number 2 has no issued branch.
 
 The tree usually duplicates its nodes but we have to be careful of two situations.
 The first is when an approach implies a periodic pseudo-orbit with more than 2 jumps,
 i.e. a number 2 (or more) in the tree. For simplicity we have chosen to limit our accounting 
 to at most 2 jumps.
 In this case Statement~\ref{STM2} does not imply
 the existence of a new approach and we stop the process.
 In the tree this means that there is no new branch corresponding to a number 2.
 We will see that this only happens when the parent node has label $0\bullet 2$
 and that the $0$ side does issue a new branch which restarts the duplication process.

  \begin{figure}[h]
     \resizebox*{12cm}{3.5cm}{\includegraphics{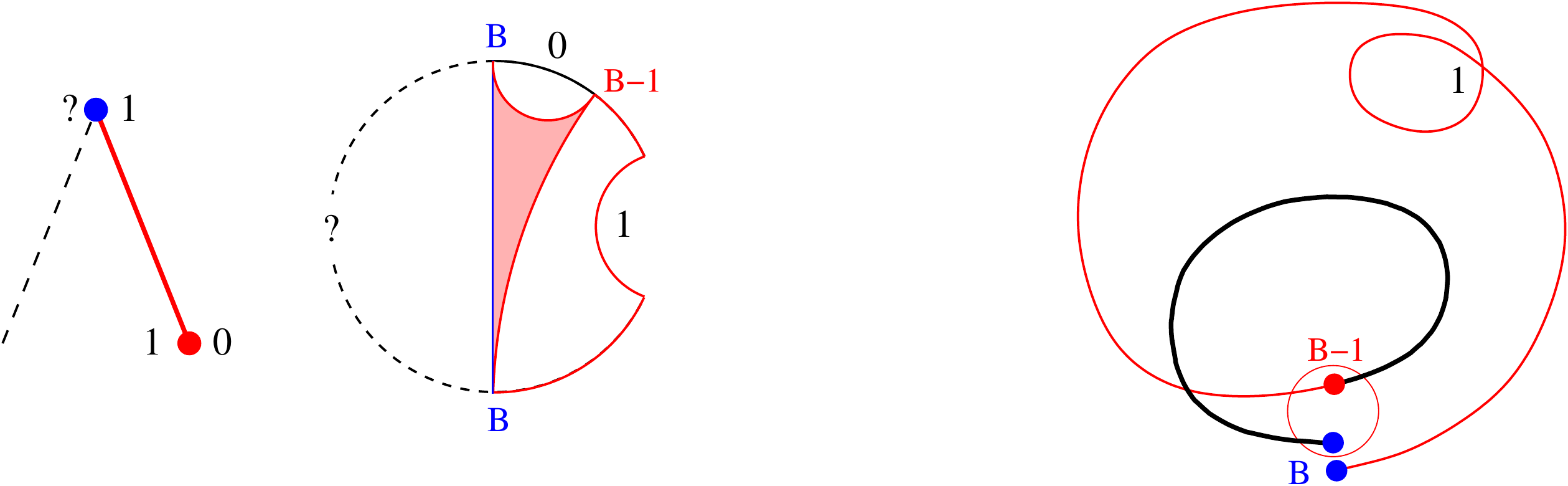}} 
     \caption{If one of the points in the approach  implied by Statement~\ref{STM2} is
     one of the jumps of the mother periodic pseudo-orbit we observe that it
     divides the mother pseudo-orbit in two child pseudo-orbits. 
     We draw lines connecting the ends of these pseudo-orbits and shadow 
     the internal part of the disk $\D$ which does not contain an interval in the circle $\SS$.}\label{shadow}
   \end{figure}

The other situation is when at least one point of a new approach is exactly 
at one of the jumps of the mother pseudo-orbit, see figures \ref{shadow} and \ref{shadow4s}.
We will see that in these cases
the approach implies two new periodic specifications, and hence two new branches
issued from the node corresponding to the approach, which will continue the 
duplication process. In the case when both points in the approach are at the
jumps of previous periodic pseudo-orbit, as in Figure~\ref{shadow4s}, both points may have already been accounted
for previously in the tree. In this case we put a white (or empty) node $\otimes$ in the
tree.

   \begin{figure}[h]
     \resizebox*{12cm}{3.5cm}{\includegraphics{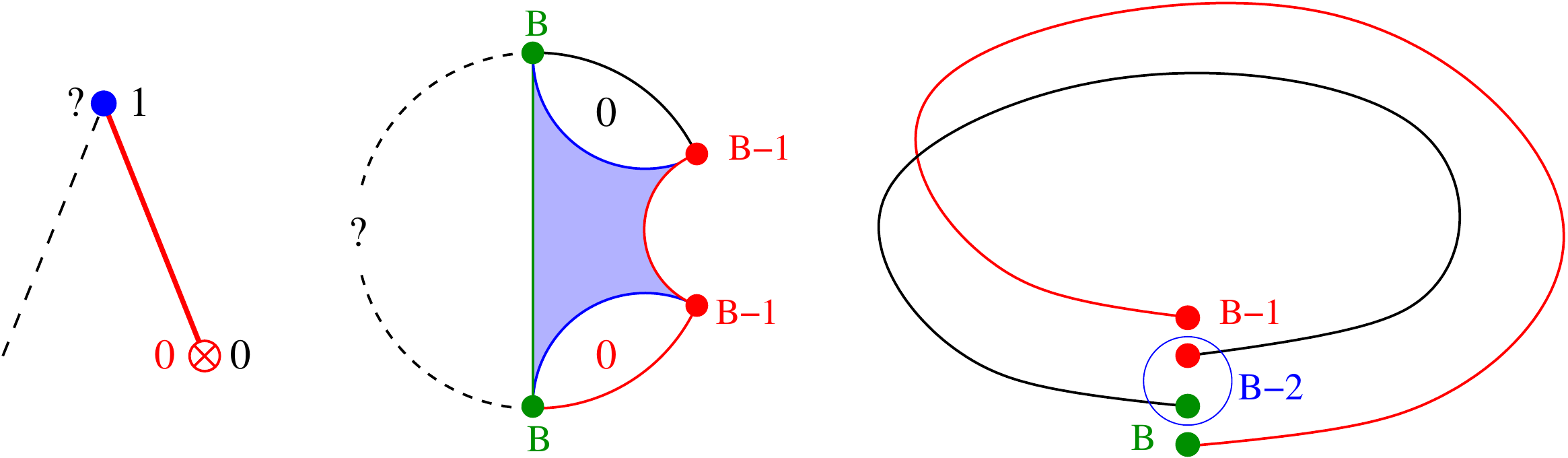}} 
     \caption{\small  If both points of the approach implied by Statement~\ref{STM2} are 
     exactly the jumps  of the mother periodic pseudo-orbit we observe that it
     divides the mother pseudo-orbit in two child pseudo-orbits with only one jump.
     The approach does not necessarily imply a new point in $(q_{t^N_\ell},\ldots,q_{t^N_{\ell+1}})$
     which was not accounted for previously in the tree. 
     Therefore we write a white (or empty) node $\otimes$
     in the tree. 
     We draw lines connecting the ends of these pseudo-orbits and shadow 
     the internal part of the disk $\D$ which does not contain an interval in the circle $\SS$.}\label{shadow4s}
   \end{figure}

We now study the building blocks of the tree.
The case of a periodic $Q^{-B}$ pseudo-orbit with only 1 jump is represented in Figure~\ref{case0},
and the case with 2 jumps is in Figure~\ref{case1s}.

       \begin{figure}[h]
     \resizebox*{12cm}{6cm}{\includegraphics{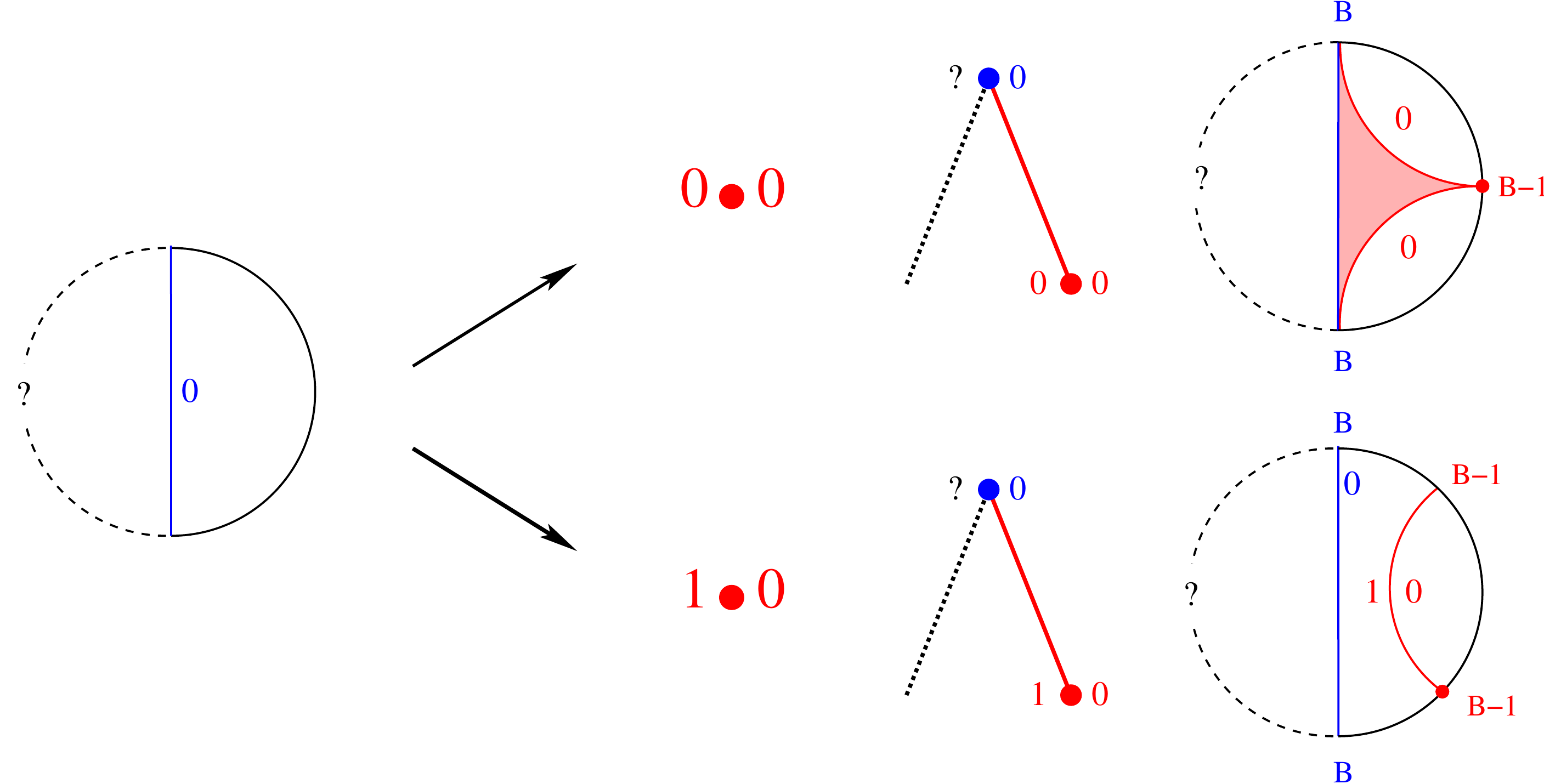}} 
  \caption{\small Possible nodes ending a branch with a label 0, i.e. child specifications 
   of a periodic  1-specification with only one jump.}\label{case0}
\end{figure}

\subsection{Childs of a periodic pseudo-orbit with 1 jump.}\label{scase0}\quad

\subsubsection{Case $0\bullet 0$. When one point of the approach is at the jump of the mother pseudo-orbit} 
Denote the periodic $Q^{-B}$ pseudo-orbit with 1 jump by $(q_a,\ldots,q_{b-1})$, $q_b=q_a$. In this case the 
$\tfrac 12 Q^{-B+1}$ approach is 
$(q_i,q_j)$ with $q_i=q_a$.
Observe that 
\begin{align}
d(T(q_{j-1}),q_i)&=d(q_j,q_i)<\tfrac 12 Q^{-B+1}.
\label{0.0a}\\
d(T(q_{b-1}),q_j)&\le d(T(q_{b-1}),q_a)+d(q_a,q_j)
\notag\\
&=d(T(q_{b-1}),q_a)+d(q_i,q_j)
\le Q^{-B}+\tfrac 12 Q^{-B+1}
<Q^{-B+1}.
\label{0.0b}
\end{align}
From \eqref{0.0a} we have that $(q_i,\ldots,q_{j-1})$ is a $Q^{-B+1}$ pseudo-orbit with only 1 jump and from
\eqref{0.0b} we have that $(q_j,\ldots,q_{b-1})$  is another $Q^{-B+1}$ pseudo-orbit with only 1 jump. 
In the disk $\D$ we draw the lines  $\ov{q_aq_j}$ and $\ov{q_jq_b}$, and also shadow the triangular region limited by the lines 
$\ov{q_aq_b}$, $\ov{q_aq_j}$ and $\ov{q_jq_b}$. This shadowed region is treated as a line with a right and left side.
The choice of right and left sides may be ambiguous and is left to the reader's will. The two regions left in white in the
disk $\D$ correspond to the periodic $Q^{-B+1}$ specifications with only one jump mentioned above.  
In the tree we label the node with the symbol $0\bullet 0$. 
The node is black $\bullet$ because the point $q_j$ in the approach $(q_a,q_j)$ did not 
appear before as a node in the tree. This node will have two branches corresponding to
the numbers 0 and 0.

\subsubsection{Case $1\bullet 0$. When both points of the approach are not at the jump of the pseudo-orbit}
Denote the periodic $Q^{-B}$ pseudo-orbit by $(q_a,\ldots,q_{b-1})$, $q_b=q_a$.
In this case the $\tfrac 12Q^{-B+1}$ approach is $(q_i,q_j)$ with $a<i<j<b$;
it implies two daughter periodic $ Q^{-B+1}$ pseudo-orbits: $(q_a,\ldots, q_{i-1}, q_j,\ldots, q_{b-1})$ with 2 jumps
and $(q_i,\ldots,q_{j-1})$ with only 1 jump. In the tree we label the node as $1\bullet 0$.
The numbers 1 and 0 correspond to the new implied $Q^{-B+1}$ pseudo-orbits with 2 and 1 jumps 
respectively. The node issues two branches corresponding to the numbers 1 and 0. The node is 
black $\bullet$ because the approach $(q_i,q_j)$ has one of its points (in fact both points)
which did not appear before in the nodes of the tree.

      \begin{figure}[h]
     \resizebox*{14.8cm}{7.4cm}{\includegraphics{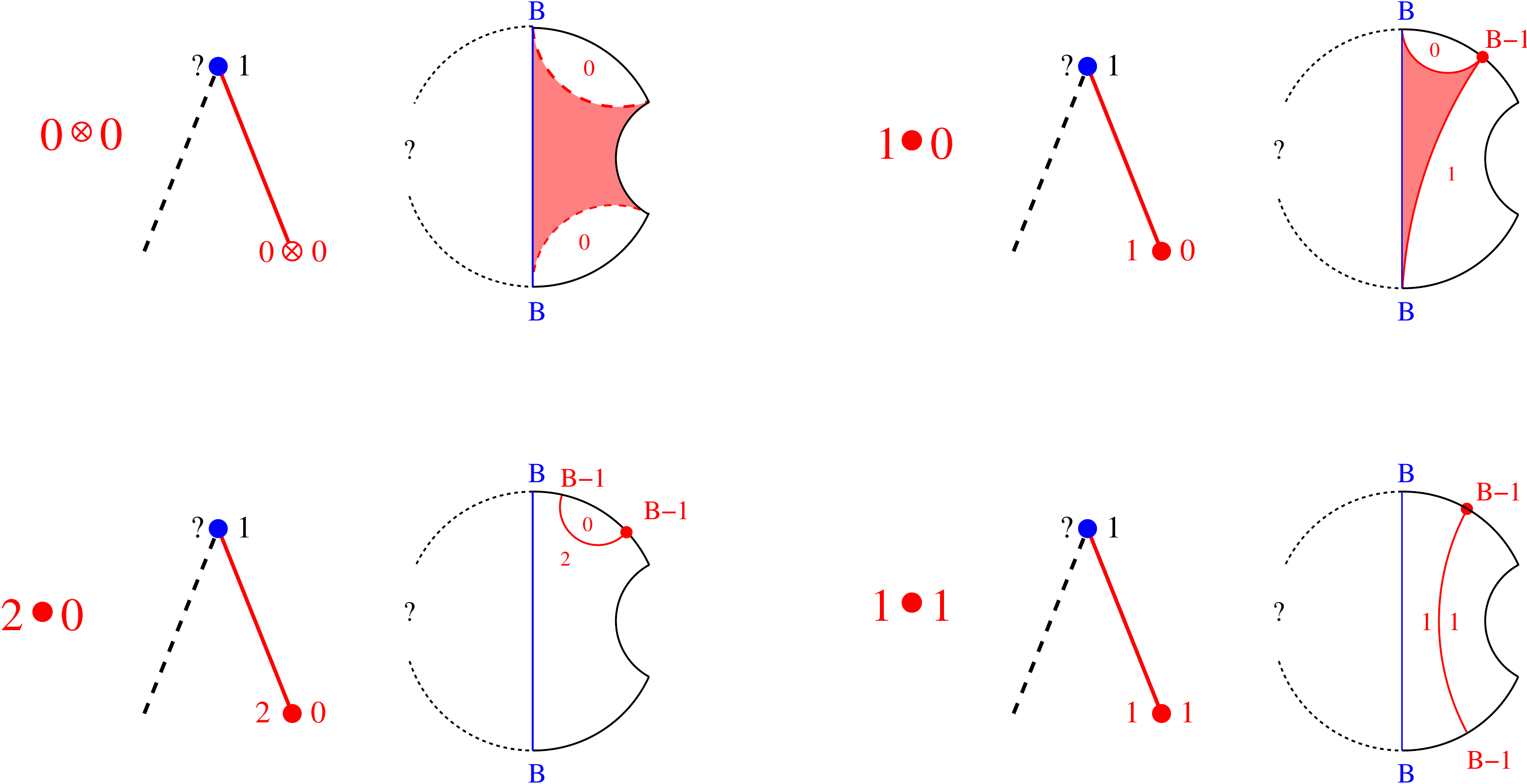}} 
  \caption{\small Possible nodes ending a branch with a label 1, i.e. child specifications
  of a periodic  1-specification with  two jumps.}\label{case1s}
\end{figure}

 \subsection{Childs of a periodic pseudo-orbit with 2 jumps.}\label{scase1}\quad
 
 \subsubsection{Case $0\otimes 0$. When both points in the  approach are the jumps of the pseudo-orbit. }
 Denote the mother $Q^{-B}$ pseudo-orbit by $(q_a,\ldots,q_{b-1},q_c,\dots,q_{d-1})$ with 2 jumps at
 $q_b=q_c$ and $q_d=q_a$. In this case the $\tfrac 12 Q^{-B+1}$ approach is $(q_i,q_j)=(q_a,q_c)$.
 Observe that 
 \begin{align}
 d(T(q_{b-1}),q_a) &\le d(T(q_{b-1}),q_c)+d(q_c,q_a)
 \le Q^{-B}+\tfrac 12 Q^{-B+1}< Q^{-B+1}.
 \label{0w0a}\\
 d(T(q_{d-1}),q_c)&\le d(T(q_{d-1}),q_a)+d(q_a,q_c)
 \le Q^{-B}+\tfrac 12 Q^{-B+1}< Q^{-B+1}.
 \label{0w0b}
 \end{align}
 By \eqref{0w0a} we have that $(q_a,\ldots,q_{b-1})$ is a periodic $Q^{-B+1}$ pseudo-orbit with only one jump.
 By \eqref{0w0b}, $(q_c,\ldots,q_{d-1})$ is another periodic $Q^{-B+1}$ pseudo-orbit with only one jump.
 The points in the approach $(x_a,x_c)$ may have both appeared before as nodes (i.e. approaches) in
 the tree, so we write a white (or empty) node $\otimes$. The label of the node is $0\otimes 0$ because both child 
 pseudo-orbits have only one jump. The node issues two branches corresponding to the numbers 0 and 0.
 In the disc we draw the lines $\ov{q_aq_b}$ and $\ov{q_cq_d}$ corresponding to the new approaches 
 and shadow the quadrilateral region limited by
 these lines and the previously drawn lines $\ov{q_bq_c}$ and $\ov{q_aq_d}$.
 
\subsubsection{Case $0\bullet 1$. When one point of the approach is one of the jumps of the pseudo-orbit.}\label{c101}
Denote the $Q^{-B}$ pseudo-orbit by $(q_a,\ldots,q_{b-1},q_c,\ldots, q_{d-1})$ with jumps at $q_b$ and $q_d$.
We can assume that in this case $q_a$ is one of the points in the $\tfrac 12 Q^{-B+1}$ 
approach $(q_i,q_j)=(q_a,q_j)$. We will further assume 
that $a<j< b$ as in Figure~\ref{case1s}, the other case $c<j<d$ is similar. 
The point $q_j$ has not appeared before in the tree, so we put a black node $\bullet$. 
We have that $(q_a,\ldots,q_{j-1})$ is a $Q^{-B+1}$ pseudo-orbit with only 1 jump, 
which gives a number $0\bullet$ in the tree. Observe that 
\begin{align*}
d(T(q_{d-1}),q_j) &\le d(T(q_{d-1}),q_a)+d(q_a,q_j) \\
&\le d(T(q_{d-1}),q_a)+d(q_i,q_j) 
\le Q^{-B}+\tfrac 12 Q^{-B+1}
<Q^{-B+1}.
\end{align*}
Therefore 
$(q_j,\ldots,q_{b-1},q_c,\ldots,q_{d-1})$ is a periodic $Q^{-B+1}$ pseudo-orbit with two
jumps. We write the label 1 in the node $0\bullet 1$.
The node issues two branches corresponding to the numbers 0 and 1.
In the disc we draw the lines $\ov{q_aq_j}$ and $\ov{q_jq_d}$. We shadow the triangular region bounded 
by the lines $\ov{q_dq_a}$, $\ov{q_aq_j}$ and $\ov{q_jq_d}$. We treat the shadowed region as a line
with right and left sides, at the choice of the reader. The white regions left by the shadow are the two
specifications with 1 and 2 jumps described above.

\subsubsection{Case $0\bullet 2$. When both of the points of the approach are in the interior of one segment of the pseudo-orbit.}
Let $(q_a,\ldots,q_{b-1},q_c,\ldots,q_{d-1})$ be the $Q^{-B}$ pseudo-orbit with jumps at $q_b$ and $q_d$. We can assume that
the approach $(q_i,q_j)$ is in interior of the first segment $(q_a,\ldots,q_{b-1})$ of the pseudo-orbit, i.e.
$a<i<j<b$. Both points of the approach did not appear before in the tree so this is a black node $\bullet$.
The segment $(q_i,\ldots,q_{j-1})$ is a periodic $Q^{-B+1}$ pseudo-orbit with only one jump, which gives a number 0 
in the node $0\bullet$. The rest of the pseudo-orbit is a periodic pseudo-orbit with 3 jumps: 
$(q_a,\ldots,a_{i-1},q_j,\ldots,q_{b-1},q_c,\ldots,q_{d-1})$. We write a number 2 in the node $0\bullet 2$.
We stop the process at the pseudo-orbit with 3 jumps. The node will issue only one branch corresponding to the number 0.

\subsubsection{Case $1\bullet 1$. When the points in the approach are in the interior of both segments of the pseudo-orbit.}
Let $(q_a,\ldots,q_{b-1},q_c,\ldots,q_{d-1})$ be the periodic $Q^{-B}$ pseudo-orbit. The indices of the approach $(x_i,x_j)$ satisfy
$a<i<b<c<j<d$. Both points of the approach did not appear before in the tree, so the node is black $\bullet$. 
Both $(q_a,\ldots,q_{i-1},q_j,\ldots,q_{d-1})$ and $(q_i,\ldots,q_{b-1},q_c,\ldots,q_{j-1})$ are 
periodic $Q^{-B+1}$ pseudo-orbits with 2 jumps,  thus the label of the node is $1\bullet 1$.
This node $1\bullet 1$ issues two branches, each one with the number 1.

\bigskip
\bigskip

      \begin{figure}[h]
     \resizebox*{14cm}{7.8cm}{\includegraphics{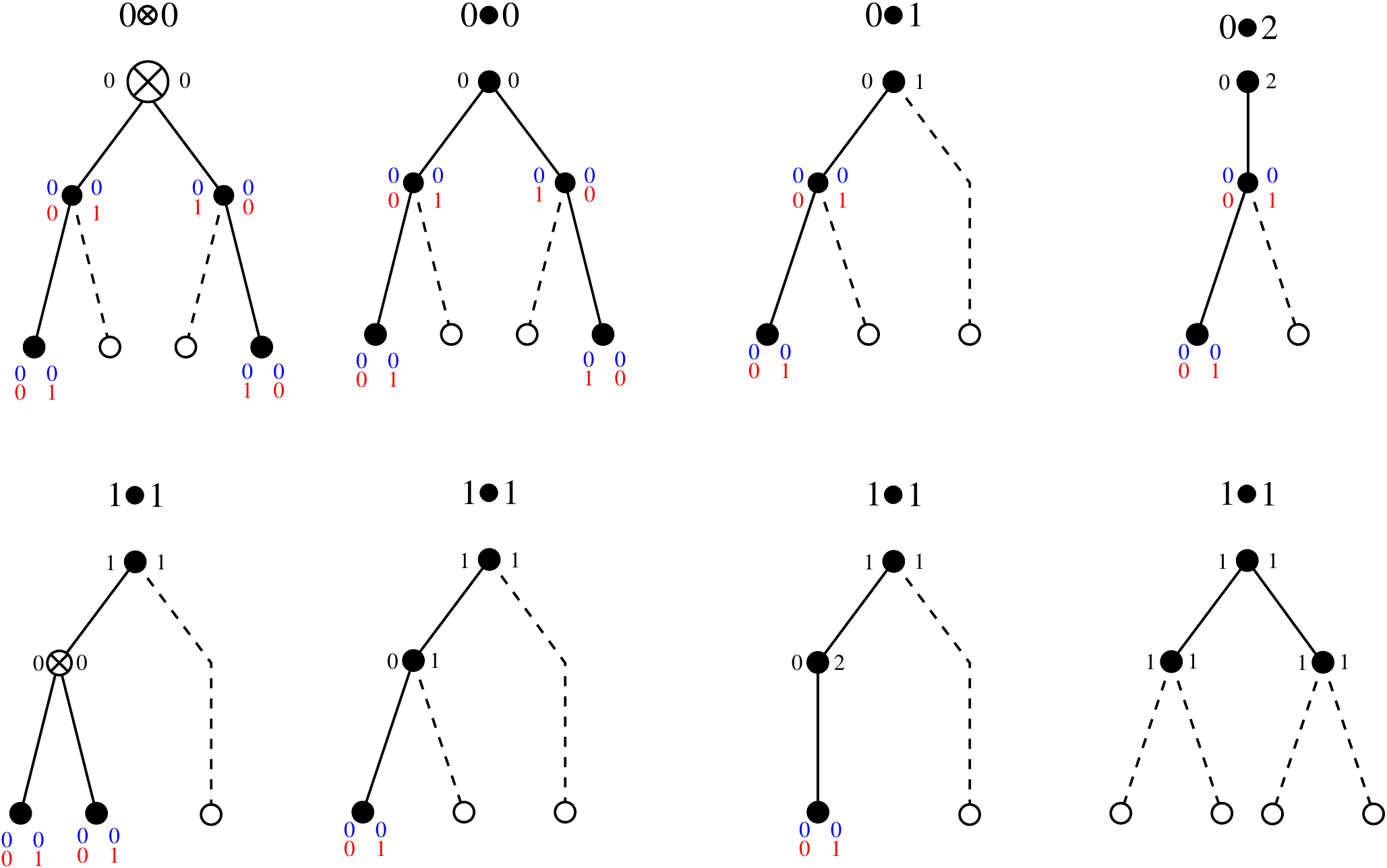}} 
  \caption{\small \openup 2pt
  This figure shows that the tree satisfies Claim~\ref{cl2step}. 
  The dots $\circ$ mean that we don't know if it is a white or black node.
  The dotted lines mean that we know that there is at least one branch, but we know
  neither the label of the branch nor the label and color of the ending node.
  The long dashed lines mean that the corresponding subtree has at least one ending node.
  The pictures use the fact from \S\ref{scase0} and Figure~\ref{case0} that a branch with label 0 can only end in nodes with labels 
  ${\color{blue} 0\bullet 0}$ or ${\color{red} 0\bullet 1}$. And in both of these cases the node has again at least 
  one new branch with label 0. All of the pictures satisfy Claim~\ref{cl2step}: i.e.
  at least two black dots in levels $N-1$, $N-2$ and at least two nodes, black or white, at the ending level $N-2$.
  }\label{2step}
\end{figure}

 The tree is built from the nodes described in \S\ref{scase0} and \S\ref{scase1} which also appear in
 Figure~\ref{case0} and Figure~\ref{case1s} respectively. In order to obtain the estimate in
  Proposition~\ref{expo} it is enough to show that at any consecutive pair of levels, the tree 
  duplicates its number of black nodes, because in that case we have
  $$
  t^N_{\ell+1}-t^N_\ell\ge\#\{\text{ black nodes }\}
  \ge 2^{\frac{N-N_0-1}2}.
  $$

  To obtain the duplication it is enough to show the following
  \begin{claim}\label{cl2step}
  At each node, black or white, in level $N>N_0+1$ the sub-tree below the node
  has at least two black nodes at levels $N-1$ and $N-2$  (added together) and also at 
  level $N-2$ the subtree of the node has at least two nodes, black or white.
  \end{claim}
  
  Because then at any two consecutive levels $N-1$, $N-2$ the number of black nodes  duplicates
  the number of nodes at level $N$  and also the total number of nodes at level $N-2$  duplicates 
  the number of nodes at level $N$. 
  
  In Figure~\ref{2step} we check that Claim~\ref{cl2step} is true.
  The figures take advantage (from \S\ref{scase0} and Figure~\ref{case0})
  that a branch with label 0 ends in a black node with label $0\bullet 0$ or $0\bullet 1$.
  In both cases the node has at least one branch with label 0 again.
  The dots $\circ$ mean that we don't know if the node is black $\bullet$ or white $\otimes$.
  In \S\ref{scase0} and \S\ref{scase1} (or Fig.~\ref{case0} and Fig.~\ref{case1s}) we see that
  all the labels for the nodes have at least one number smaller than 2. This implies that
  every node issues at least one branch. In Figure~\ref{2step} there are some long dashed 
  lines which mean that we know that there is at least one branch and at least one ending node,
  but we don't pay attention to more details.

The subtrees from a node $1\bullet 1$ are drawn in the lower line in Figure~\ref{2step}.
They are ordered by the first child node at the left hand side. 
The first three cases in the second row show that a subtree from a node $1\bullet 1$
which has a child with label either $0\otimes 0$, $0\bullet 1$ or $0\bullet 2$ satisfies the
Claim~\ref{cl2step}. The last case is a node $1\bullet 1$ with one left child  node $1\bullet 1$. 
 For the right branch, we have already seen that if the right node 
 is  $0\otimes 0$, $0\bullet 1$ or $0\bullet 2$ then the subtree satisfies Claim~\ref{cl2step}.
 It only remains the case in which the right node is also $1\bullet 1$.
 Figure~\ref{2step} shows that this last case also satisfies Claim~\ref{cl2step}.
 
 This completes the proof of Proposition~\ref{expo}. 
 
 \qed

  \appendix

 \color{black}
 
% \newpage
%  \quad
%  \newpage
%  \quad
%  \newpage

  \section{Zero Entropy.}  
  \label{aze}
  
  In this appendix we prove Ian Morris Theorem~\ref{Tmorris}.
  The published version was written for symbolic dynamics.
  We need two lemmas.
  
  \begin{Lemma}\label{A.1}
  Let $a_1,\ldots, a_n$ be non-negative real numbers, and let $A=\sum_{i=1}^n a_i\ge 0$.
  Then
  $$
  \sum_{i=1}^n -a_i\,\log a_i \le 1 + A\,\log n,
  $$
  where we use the convention $0\, \log 0 = 0$.
  \end{Lemma}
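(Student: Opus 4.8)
The plan is to reduce the stated inequality to the classical bound on the Shannon entropy of a probability vector. First I would dispose of the degenerate case $A=0$: then every $a_i$ vanishes, so the left-hand side is $0$ by the convention $0\log 0 = 0$, while $1 + A\log n = 1 \ge 0$, and the inequality holds. Hence I may assume $A>0$ and normalize by setting $p_i := a_i/A$, so that $p_i\ge 0$ and $\sum_{i=1}^n p_i = 1$; that is, $(p_1,\ldots,p_n)$ is a probability vector.

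Next I would substitute $a_i = A p_i$ and separate the logarithm:
\begin{align*}
\sum_{i=1}^n (-a_i\log a_i)
 &= A\sum_{i=1}^n p_i\Bigl(\log\tfrac1A + \log\tfrac1{p_i}\Bigr)
 = -A\log A + A\sum_{i=1}^n p_i\log\tfrac1{p_i},
\end{align*}
where throughout, indices with $a_i=0$ (equivalently $p_i=0$) are dropped from every sum, consistently with the convention. The two resulting terms are then estimated independently. For the entropy term $H := \sum_i p_i\log\frac1{p_i}$ I would invoke the elementary bound $H\le\log n$, valid for any probability vector on $n$ symbols; it follows at once from Jensen's inequality applied to the concave function $\log$, namely $H = \sum_i p_i\log\frac1{p_i}\le\log\bigl(\sum_i p_i\cdot\tfrac1{p_i}\bigr)=\log n$. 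For the first term, one-variable calculus gives $-A\log A\le\sup_{t>0}(-t\log t)=1/e\le 1$. Adding these yields $\sum_i(-a_i\log a_i)\le 1 + A\log n$, in fact with the sharper constant $1/e$ in place of $1$.

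I do not expect a genuine obstacle in this lemma; the only points requiring a little care are the bookkeeping with the convention $0\log 0 = 0$ (so that zero terms may be discarded from all sums without changing anything) and the separate treatment of $A=0$. Once the normalization $p_i = a_i/A$ is in place, the statement is precisely that $-A\log A$ is bounded and that the entropy of a distribution on $n$ symbols is at most $\log n$.
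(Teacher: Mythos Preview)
Your argument is correct and essentially the same as the paper's: both arrive at the intermediate bound $\sum_i(-a_i\log a_i)\le -A\log A + A\log n$ via Jensen's inequality, and then use $-A\log A\le 1/e\le 1$. The only cosmetic difference is that the paper applies Jensen directly to the concave function $x\mapsto -x\log x$ with uniform weights $1/n$, whereas you first normalize to a probability vector and apply Jensen to $\log$; the two computations yield the same inequality.
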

  
  \begin{proof}
  Applying Jensen's inequality to the concave function $x\mapsto -x \log x$ yields
  $$
  \frac 1n \sum_{i=1}^n-a_i\,\log a_i 
  \le -\left(\frac 1n \sum_{i=1}^n a_i\right)
  \log\left(\frac 1n \sum_{i=1}^n a_i\right)
  =-\frac An\,\log A + \frac An\, \log n
  $$
  from which the result follows.
 \end{proof}
 
 \medskip
 \begin{Lemma}\label{A.2}
 Let $f\in \Lip(X,\re)$ and suppose that $\cM_{\rm max}(f)=\{\mu\}$ for some $\mu\in \cM(T)$.
 Then there is $C>0$ such that for every $\nu\in\cM(T)$,
 $$
 -\a(f) - C\int d(x,K)\;d\nu \le \int f \;d\nu,
 $$
 where $K=\supp \mu$.
 \end{Lemma}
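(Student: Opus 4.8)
The plan is to reduce to a cohomologous non-positive function and then use an elementary pointwise Lipschitz estimate; no compactness or genericity argument is needed for this particular inequality. First I would fix a Lipschitz calibrated sub-action $u$ for $f$, which exists by Proposition~\ref{wK}, and set $\oF := f + \a(f) + u - u\circ T$ as in \eqref{ovF}. By Lemma~\ref{LoF}-2.(ii) we have $\oF \le 0$, and since $\mu$ is $f$-maximizing, hence $\oF$-maximizing, Lemma~\ref{LoF}-2.(iii) gives $K = \supp\mu \subseteq [\oF = 0]$. Moreover, for any $\nu \in \cM(T)$ the coboundary $u - u\circ T$ integrates to zero, so $\int \oF\,d\nu = \a(f) + \int f\,d\nu$; thus the asserted inequality $-\a(f) - C\int d(x,K)\,d\nu \le \int f\,d\nu$ is equivalent to $\int \oF\,d\nu \ge -C\int d(x,K)\,d\nu$.

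The core step is the pointwise bound $\oF(x) \ge -\Lip(\oF)\,d(x,K)$ for every $x \in X$. To see this, pick $z \in K$ with $d(x,z) = d(x,K)$ (possible since $K$ is closed and $X$ compact); then $z \in [\oF=0]$, so $\oF(z) = 0$, and, using $\oF \le 0$,
$$
-\oF(x) = |\oF(x) - \oF(z)| \le \Lip(\oF)\, d(x,z) = \Lip(\oF)\, d(x,K).
$$
Integrating this inequality against an arbitrary $\nu \in \cM(T)$ yields $\int \oF\,d\nu \ge -\Lip(\oF)\int d(x,K)\,d\nu$, which by the reformulation above is exactly the claim with $C := \Lip(\oF)$ (any larger constant works too, and if $\Lip(\oF)=0$ then $\oF \equiv 0$ and one may take $C = 1$).

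I do not expect a real obstacle here: the whole content is packaged into the existence of a calibrated sub-action (Proposition~\ref{wK}) together with Lemma~\ref{LoF}, plus the trivial fact that a non-positive Lipschitz function is controlled by the distance to its zero set. The only point to keep in mind is bookkeeping: the set $[\oF=0]$, and hence the constant $C=\Lip(\oF)$, depends on the chosen sub-action $u$, but since the statement only asserts \emph{existence} of $C$ this is harmless. It is worth remarking that the hypothesis $\cM_{\rm max}(f)=\{\mu\}$ is not actually used in the argument — only that $K=\supp\mu$ is contained in the zero set of some calibrated modification of $f$ — so the lemma holds verbatim for any single maximizing measure.
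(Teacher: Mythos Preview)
Your proof is correct and follows essentially the same line as the paper's: both pass to the cohomologous function $\oF$ (the paper writes it as $\tf=f+g-g\circ T\le -\a(f)$, i.e.\ without adding the constant $\a(f)$), observe it equals its maximum on $K=\supp\mu$, and then apply the elementary Lipschitz bound $\oF(x)\ge \oF(z)-\Lip(\oF)\,d(x,K)$ with $z$ a nearest point of $K$. Your closing remark that uniqueness of $\mu$ is not actually used is also consistent with the paper's argument, which only uses that $\mu$ is maximizing.
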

 
 \begin{proof}
 By Proposition~\ref{wK} and Lemma~\ref{LoF}.3.ii there exists $g\in \Lip(X,\re)$ such that 
 \linebreak
 $f+g-g\circ T\le -\a(f)$. Define $\tf = f +g - g\circ T $. 
  Since $\mu\in\cM_{max}(f)$, 
  $$
  \int \tf \, d\mu = \int f \, d\mu =-\a(f)
  \quad\text{ and }\quad
 \tf \le -\a(f).
 $$ 
 Since $\tf$ is continuous, it follows that $\tf(z)=-\a(f)$ for every $z\in K=\supp\mu$.
 Let $C=\Lip(\tf)$. Given $x\in X$, let $z\in K$ be such that $d(x,z)=d(x,K)$.
We have that
 $$
 \tf(x)\ge \tf(z)- C\,d(x,z)
 =-\a(f)-C\, d(x,K)
 $$
 from which the result follows.
 \end{proof}
  
  \medskip
  
     \noindent {\bf \ref{Tmorris}. Theorem }{(Morris \cite{Morris}){\bf .}
      ~\newline\indent{\it 
  Let $X$ be a compact metric space and $T:X\hookleftarrow$  an expanding map.
  There is a residual set $\cG\subset \Lip(X,\re)$ such that if
  $F\in \cG$ then there is a unique $F$-maximizing measure and 
  it has zero metric entropy.}
  
 \medskip
 
 \begin{Remark}\label{mergodic}
 By the linearity of the integral, or by the characterization of maximizing
 measures in Lemma~\ref{LoF}-2.(iii), the ergodic components of a maximizing 
 measure are also maximizing. Therefore the unique maximizing measure in
 Theorem~\ref{Tmorris} is ergodic. In fact  the map
 $T|_{\supp(\mu)}$  is uniquely ergodic.
 \end{Remark}

 \medskip

  \noindent{\bf Proof of Theorem~\ref{Tmorris}:}
  
  For $p\ge 1$ let $\cM^p(T)$ be the set of invariant probabilities supported on a 
  periodic orbit of period smaller or equal to $p$. In this appendix we will identify a periodic 
  orbit $\{z,Tz,\ldots,T^{p-1}z\}$ with the corresponding invariant measure 
  $\mu=\frac 1p \sum_{i=0}^{p-1} \delta_{T^iz}$.
  
  Let 
  \begin{equation}\label{e0}
  e_0>0, \qquad 0<\la<1
  \end{equation}
   be such that for every $x\in X$ the branches of the inverses of $T$ at $x$ are
  well defined, injective, and are $\la$-contractions on the ball 
  $B(x,\e_0)$ of radius $e_0$ centered at $x$.

    Let
   $$
   \cE_\ga:=\{\, f\in\Lip(X,\re)\;|\; h(\mu)< 2\,\ga\, h_{\rm top}(T) \quad \forall \mu\in\cM_{\rm max}(f)\,\}.
   $$
   By Theorem~\ref{CLT} the set 
   $$
   \cO=\{f\in\Lip(X,\re) \;|\; \#\cM_{\max}(f)=1\,\}
   $$
   is residual.
   
   It is enough to prove that $\cE_\ga$ is open and dense for every $\ga>0$,
   for then the set
   $$
   \cG=\cO\cap\bigcap_{n\in\na}\cE_{\frac 1n}
   $$ 
   satisfies the requirements of the Theorem.

   {\it Step 1.} $\cE_\ga$ is open.
   
   Suppose that $f\in \Lip(X,\re)$, $f_n\in \Lip(X,\re)\setminus \cE_\ga$ and $\lim_n f_n = f$ in $\Lip(X,\re)$.
   Then there are $\nu_n\in\cM_{\rm max}(f_n)$ with $h(\nu_n)\ge 2 \ga\, h_{\rm top}(T)$.
   Taking a subsequence if necessary, we may assume that $\nu_n\to \nu\in\cM(T)$.
   For any $\mu\in\cM(T)$ we have that
   $$
   \int f\, d\mu-\lV f -f_n\rV_\infty\le \int f_n\;d\mu
   \le \int f_n\;d\nu_n\le \int f\, d\nu_n +\lV f-f_n\rV_\infty.
   $$
   Taking $\lim_n$ we get that  $\int f\, d\mu \le \int f\, d\nu$ 
   for any $\mu\in\cM(T)$
   and hence $\nu\in \cM_{\rm max}(T)$.
   Since the map $m\mapsto h(m)$ is upper semicontinuous 
   (see e.g. Walters~\cite[Theorem 8.2]{Walters}) we have that
   $h(\nu) \ge 2\ga\,h_{\rm top}(T)$. Therefore $f\in \Lip(X,\re)\setminus \cE_\ga$. 
   We conclude that  $\Lip(X,\re)\setminus \cE_\ga$ is closed and then $\cE_\ga$ is open.

   {\it Step 2.} We have to prove that $\cE_\ga$ intersects every non-empty open set.
    Let $\cU\subset \Lip(X,\re)$ be open and non-empty. By Theorem~\ref{CLT} there is 
    $f\in \cU$ such that $\cM_{\rm max}(f)$ has only one element $\mu$. 
    If $\mu$ is a periodic orbit then $f\in\cE_\ga\cap \cU$ and we are done. 
    Otherwise, since by Lemma~\ref{LoF}-2.(iii) any measure in $\supp(\mu)$ 
    would also be maximizing,
    we have that $K:=\supp(\mu)$ does not contain a periodic orbit.
    By Lemma~\ref{A.2} there is
    a real number $C>0$ and a compact invariant set $K$ such that for every $\nu\in\cM(T)$ 
    \begin{equation} \label{Ae3}
    -\a(f)- C\int d(x,K)\; d\nu \le \int f\;d\nu
    \end{equation}
    and such that $K$ does not contain a periodic orbit.
    
    Let $\be>0$ be small enough that $f+g\in \cU$ whenever 
    \begin{equation}\label{Ubeta}
    \lV g\rV_{0}+\Lip(g)\le (\diam X+1)\be.
    \end{equation}
    We will construct a sequence of approximating functions such that
    $f_n\in \cU\cap \cE_\ga$
    for $n$ large enough.  In the next two steps we choose a sequence of periodic orbits which 
    will be used in the construction.
   
   {\it Step 3.}
   \begin{claim}\label{CTM} Given any $0<\theta<1$, 
   there is a sequence of integers $(m_n)_n$ and a sequence of periodic orbits
   $\mu_n\in\cM^n(T)$ such that
   \begin{gather*}
   \int d(x,K)\;d\mu_n =o(\th^{m_n})
   \qquad\text{ and }\qquad
   \lim_{n\to\infty}\frac{\log n}{m_n}=0.
   \end{gather*}
   \end{claim}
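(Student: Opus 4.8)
The plan is to obtain the periodic orbits $\mu_n$ by applying the shadowing machinery (Proposition~\ref{shadowing}) to long orbit segments of the dynamics on $K$, and then control $\int d(x,K)\,d\mu_n$ by exploiting the fact that the expansion rate is $\lambda^{-1}$ while returns of orbits in a compact metric space to within a small scale occur along geometrically-spaced times. The key point is that, since $K$ is compact and $T|_K$ is expanding with contraction constant $\lambda$ for the inverse branches, any $\e$-dense subset of $K$ together with a recurrence argument yields periodic $\e$-pseudo-orbits of controlled period, and the shadowing point then stays within $\frac{\e}{1-\lambda}$ of $K$.

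First I would fix $\theta\in(0,1)$ and, for each small scale $\e>0$, consider a point $x\in K$ and its forward orbit; by compactness of $K$ the orbit must return to within $\e$ of itself, say $d(T^{q}x,x)\le\e$ for some $q=q(\e)$. Closing up this segment gives a $q(\e)$-periodic $\e$-pseudo-orbit in the $\e/(1-\lambda)$-neighborhood of $K$ with a single jump; Proposition~\ref{shadowing} produces a genuine $q(\e)$-periodic point $y_\e$ whose orbit $\frac{\e}{1-\lambda}$-shadows it, hence stays within $\frac{2\e}{1-\lambda}$ of $K$. Taking $\e=\e_n\to0$ along a suitable sequence, setting $m_n:=q(\e_n)$ and $\mu_n$ the invariant measure on $\cO(y_{\e_n})$, we get $\int d(x,K)\,d\mu_n\le \frac{2\e_n}{1-\lambda}$, and we are free to choose $\e_n$ as small as we like relative to $\theta^{m_n}$. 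The delicate coupling is that $m_n=q(\e_n)$ is forced on us once $\e_n$ is chosen, so we cannot simply demand $\e_n=o(\theta^{m_n})$ after the fact.

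The main obstacle — and the step I expect to require real care — is arranging both $\int d(x,K)\,d\mu_n=o(\theta^{m_n})$ \emph{and} $\log m_n=o(m_n)$ (equivalently $\frac{\log n}{m_n}\to0$ once one arranges $\mu_n\in\cM^n(T)$, i.e. $m_n\le n$) simultaneously, since the period $m_n$ is not under our direct control. The resolution is a two-scale selection: run the recurrence argument at scale $\e$ to get a return time $q(\e)$, then \emph{follow that closed pseudo-orbit for $k$ full loops} and instead use the finer return that must occur within the $kq(\e)$ points to produce a periodic orbit of period $\le kq(\e)$ but shadowing $K$ at a scale that can be driven below $\theta^{kq(\e)}$ by taking $k$ large (here one uses that the orbit visits the $\e/(1-\lambda)$-neighborhood and a pigeonhole on the $\e$-scale forces a much closer return, of order $\e\,\lambda^{\,c k}$ for the shadowing point by the contraction estimate in the proof of Proposition~\ref{shadowing} and Corollary~\ref{expansivity}). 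Choosing $k=k_n\to\infty$ slowly and $\e=\e_n\to0$ appropriately, one gets $m_n:=k_nq(\e_n)\to\infty$ with $\int d(x,K)\,d\mu_n=o(\theta^{m_n})$ and, since $m_n$ grows at least linearly in $k_n$ while we only need $\log m_n=o(m_n)$, the second requirement is automatic. I would then relabel so that $\mu_n\in\cM^n(T)$ (pad the index), completing the claim.
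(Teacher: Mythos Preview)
Your proposal has a genuine gap at the quantitative step, and the paper's proof takes a different route precisely to avoid it.

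The paper does not build the periodic orbits by hand. It invokes the Bressaud--Quas theorem (\cite{BQ}), which asserts that for every $k>0$,
\[
\lim_{n\to+\infty}n^k\Big( \inf_{\mu\in\cM^n(T)}\int d(x,K)\;d\mu\Big) =0,
\]
i.e.\ the approximation rate is \emph{super-polynomial} in the period. From this one sets $r_n:=\log_\theta\!\big(\int d(x,K)\,d\mu_n\big)$ and $m_n:=\lfloor r_n/2\rfloor$; the two conclusions then follow by a short calculation.

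The reason your direct shadowing/recurrence approach cannot close the argument is that the two conditions in the claim together \emph{force} super-polynomial decay. Writing $\phi(n):=\inf_{\mu\in\cM^n}\int d(x,K)\,d\mu$, the requirement $\phi(n)=o(\theta^{m_n})$ gives $m_n\le \dfrac{-\log\phi(n)}{|\log\theta|}+o(1)$, and then $\dfrac{\log n}{m_n}\to 0$ forces $\dfrac{-\log\phi(n)}{\log n}\to\infty$, i.e.\ $\phi(n)=o(n^{-k})$ for every $k$. A plain pigeonhole on $\e$-balls in $K$ only yields a periodic orbit of period $\le N(\e)$ (the covering number) within $O(\e)$ of $K$, which translates to a \emph{polynomial} bound $\phi(n)\lesssim n^{-1/d}$ and is not enough.

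Your ``two-scale'' fix does not repair this. Looping the closed pseudo-orbit $k$ times produces no new points, hence no finer returns. If instead you follow the genuine orbit in $K$ for $kq(\e)$ steps and apply pigeonhole at scale $\e$, you get two times $i<j$ with $d(T^ix,T^jx)<\e$, but nothing forces $T^{i-\ell}x$ and $T^{j-\ell}x$ to lie in the same local inverse branch for $\ell$ up to order $k$, so the contraction estimate from Proposition~\ref{shadowing}/Corollary~\ref{expansivity} does not yield the claimed $\e\lambda^{ck}$ return. Obtaining repeats with long matching symbolic pasts (equivalently, long repeated subwords in the coding) is exactly the nontrivial combinatorial content of Bressaud--Quas; you would have to reproduce that argument here, and your sketch does not.
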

   
   \noindent{\it Proof of the Claim.}
   By a theorem of Bressaud and Quas \cite[Corollary 3 and Theorem 4]{BQ} for every $k>0$
   \begin{equation}\label{eBQ}
   \lim_{n\to+\infty}n^k\left( \inf_{\mu\in\cM^n(T)}\int d(x,K)\;d\mu\right) =0.
   \end{equation}
   Indeed  recall that using a Markov partition (cf. Ruelle~\cite[\S 7.29]{ruelle})
    the map $T$ is H\"older 
   continuously semi-conjugate to a subshift of finite type. 
   This is enough to obtain  estimate \eqref{eBQ}
   (see the proof of Corollary 3 in Bressaud and Quas \cite{BQ}).

   From~\eqref{eBQ} there exists a sequence of periodic orbits $\mu_n\in\cM^n(T)$ such that
   $$
   \lim_{n\to+\infty} n^k \int d(x,K)\; d\mu_n =0.
   $$
   Define
   $$
   r_n:=\log_\theta \left(\int d(x,K)\;d\mu_n\right).
   $$
   Since
   $$
   \theta^{r_n}\le n^k\,\theta^{r_n}\le 1
   \qquad\iff\qquad
   0\ge \frac{\log_\theta n}{r_n}\ge -\frac 1k,
   $$
   we have that $r_n^{-1}\log_\theta n \to 0$. Define $m_n:= \lfloor \frac 12 r_n\rfloor$,
   then $m_n^{-1} \log_\theta n \to 0$ and
   $$
   \int d(x,K)\;d\mu_n= \theta^{r_n}\le \theta^{m_n+\frac 12 r_n}=o(\theta^{m_n})
   $$
   as required.
   
   \bigskip
   
   {\it Step 4.} Using~\eqref{e0} fix 
      \begin{equation}\label{theta}
      0<\theta<\min\{e_0,\la, e_0 \Lip(T)^{-1}\}.
      \end{equation}
      Choose $m_n$ and $\mu_n$ as in Claim~\ref{CTM}.
      Define $L_n:=\supp \mu_n$.  
   \begin{claim}\label{CSA4}
   There is $N_\ga>0$ such that when $n\ge N_\ga$
   $$
   \nu(\{\,x\in X\;|\; d(x,L_n)\ge \th^{m_n}\,\})>\ga
   $$
   for every invariant measure $\nu\in\cM(T)$ such that
   $h(\nu)\ge 2 \ga\,h_{\rm top}(T)$.
   \end{claim}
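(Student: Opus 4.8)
The plan is to prove the contrapositive, uniformly in $\nu$. First, if $h_{\rm top}(T)=0$ then every invariant measure has zero entropy and Theorem~\ref{Tmorris} is trivial (take $\cG=\cO$), so I assume $h_{\rm top}(T)>0$. Put $A_n:=\{x\in X : d(x,L_n)<\theta^{m_n}\}$; since $L_n=\supp\mu_n$ has at most $n$ points, $A_n$ is a union of at most $n$ balls of radius $\theta^{m_n}$, and the Claim is equivalent to: for $n$ large, $\nu(A_n)<1-\ga$ whenever $h(\nu)\ge 2\ga\,h_{\rm top}(T)$. So it suffices to produce $N_\ga$ — depending only on $\ga$ and on the fixed data introduced below, \emph{not} on $\nu$ — such that for every $n\ge N_\ga$ and every $\nu\in\cM(T)$ one has $\nu(A_n)\ge 1-\ga\then h_\nu(T)<2\,\ga\,h_{\rm top}(T)$.

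Next I would fix a Markov partition $\cP$ for $T$ (these exist, cf.\ Ruelle~\cite{ruelle}, as already used for~\eqref{eBQ}), refined so that $\diam\cP$ is below the expansivity constant of the (classically positively expansive) map $T$; then $\cP$ is a one-sided generator, so, writing $\cP^{(k)}:=\bigvee_{i=0}^{k-1}T^{-i}\cP$ and $H_\nu(\mathcal Q):=-\sum_{Q\in\mathcal Q}\nu(Q)\log\nu(Q)$, one has $h_\nu(T)=h_\nu(T,\cP)=\inf_k\tfrac1k H_\nu(\cP^{(k)})\le\tfrac1{m_n}H_\nu(\cP^{(m_n)})$ for every $\nu\in\cM(T)$. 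Being Markov, $\cP^{(k)}$ has atoms of diameter $\le C\la^k$, and at most $Ce^{k\,h_{\rm top}(T)}$ of them for a constant $C$ (the depth-$k$ atoms correspond to the admissible length-$k$ words of the associated subshift of finite type, whose number is $\le Ce^{k\,h_{\rm top}(T)}$ since the coding is boundedly finite-to-one over $X$). Call an atom of $\cP^{(m_n)}$ \emph{good} if it meets $A_n$ and \emph{bad} otherwise. Bad atoms are disjoint from $A_n$, so $\nu(\bigcup\{\text{bad}\})\le\nu(X\setminus A_n)\le\ga$; Lemma~\ref{A.1} then bounds their contribution to $H_\nu(\cP^{(m_n)})$ by $1+\ga\log\!\big(Ce^{m_n h_{\rm top}(T)}\big)=1+\ga\log C+\ga\,m_n\,h_{\rm top}(T)$, and bounds the good atoms' contribution by $1+\log N_n$ where $N_n:=\#\{\text{good atoms}\}$.

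The heart of the matter, and what I expect to be the main obstacle, is the bound $N_n\le\kappa\,n$ for a constant $\kappa$. Since $\theta<\la$ by~\eqref{theta} and $\Lip(T)\ge\la^{-1}\ge 1$, one has $\Lip(T)^i\theta^{m_n}<(\Lip(T)\theta)^{m_n}$ for $i<m_n$; hence, \emph{provided the periodic orbit $L_n$ stays at distance $\ge(\Lip(T)\theta)^{m_n}$ from $\partial\cP:=\bigcup_{Q\in\cP}\partial Q$}, for each $z\in L_n$, each $a$ and each $y$ with $d(y,T^az)<\theta^{m_n}$ one gets $d(T^iy,T^{a+i}z)\le\Lip(T)^i\theta^{m_n}<d(T^{a+i}z,\partial\cP)$ for all $i<m_n$, so $T^iy$ and $T^{a+i}z$ lie in the same atom of $\cP$; thus each ball $B(T^az,\theta^{m_n})$ lies in a single atom of $\cP^{(m_n)}$, so $A_n$ meets at most $n$ atoms and $N_n\le n$. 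It then remains to choose $\mu_n$ in Step~3/4 with $L_n$ avoiding the $(\Lip(T)\theta)^{m_n}$-neighbourhood of the fixed nowhere-dense set $\partial\cP$ — a minor strengthening of the selection — or, equivalently and more cleanly, to transcribe the whole argument into the subshift of finite type, where $\partial\cP=\emptyset$ and $A_n$ is literally a union of at most $n$ cylinders of length $\ge m_n$, each contained in a unique cylinder of length $m_n$. This boundary bookkeeping is the only genuinely delicate point; everything else is assembling standard facts.

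Granting $N_n\le\kappa\,n$, the two estimates give $H_\nu(\cP^{(m_n)})\le 2+\log\kappa+\log n+\ga\log C+\ga\,m_n\,h_{\rm top}(T)$, hence $h_\nu(T)\le\dfrac{2+\log\kappa+\ga\log C+\log n}{m_n}+\ga\,h_{\rm top}(T)$. By Claim~\ref{CTM} we have $(\log n)/m_n\to 0$, so the first term tends to $0$ as $n\to\infty$; since $h_{\rm top}(T)>0$ there is $N_\ga$ with that term $<\ga\,h_{\rm top}(T)$ for all $n\ge N_\ga$, whence $h_\nu(T)<2\,\ga\,h_{\rm top}(T)$. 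The bound does not involve $\nu$, so this $N_\ga$ serves all $\nu\in\cM(T)$ at once — in particular no ergodicity is used — and taking complements gives the Claim.
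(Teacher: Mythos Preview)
Your approach is essentially the paper's: prove the contrapositive, take a Markov partition $\cP$ as a one-sided generator so that $h_\nu(T)\le \tfrac1{m_n}H_\nu(\cP^{(m_n)})$, split the atoms of $\cP^{(m_n)}$ into those meeting $A_n$ (your ``good'' atoms, the paper's $W_n$) and the rest, bound each contribution with Lemma~\ref{A.1}, and use $(\log n)/m_n\to 0$ together with $\tfrac1{m_n}\log\#\cP^{(m_n)}\lesssim h_{\rm top}(T)$ to conclude. The only substantive difference is in how the number of good atoms is bounded.

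The paper does \emph{not} try to place each ball $B(z,\theta^{m_n})$, $z\in L_n$, inside a \emph{single} atom of $\cP^{(m_n)}$; it only shows that each such ball meets at most $\#\cP$ atoms, giving $N_n\le n\cdot\#\cP$. The argument is: from~\eqref{theta} one has $\theta^{m_n}<e_0\Lip(T)^{-m_n}$, so the local inverse branch $g$ of $T^{m_n}$ through $z$ satisfies $B(z,\theta^{m_n})\subset g\big(B(T^{m_n}z,e_0)\big)$; since (by the Markov property, with $\diam\cP<e_0$) every atom of $\cP^{(m_n)}$ is of the form $g'(A)$ with $A\in\cP$, applying $T^{m_n}$ shows that any atom meeting $B(z,\theta^{m_n})$ corresponds to some $A\in\cP$ meeting $B(T^{m_n}z,e_0)$, and there are at most $\#\cP$ of those. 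This gives the linear bound with no hypothesis on the position of $L_n$ relative to $\partial\cP$, so there is no need to revisit the selection of $\mu_n$ in Step~3 or to retreat to the symbolic model --- it dissolves exactly the ``boundary bookkeeping'' you flagged as the delicate point. (Incidentally, for your version you would want $\Lip(T)\,\theta<1$ so that the forbidden neighbourhood of $\partial\cP$ actually shrinks; the bound $\theta<\la$ alone does not ensure this, but $\theta<e_0\Lip(T)^{-1}$ from~\eqref{theta} does once one normalises $e_0<1$.)
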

   \noindent{\it Proof of the Claim.}

   Recall that a Markov partition for $T$ is a finite collection of sets $S_i$ 
   which cover $X$ such that
   \begin{enumerate}
   \renewcommand{\theenumi}{\alph{enumi}}
   \item $S_i = \ov{{\rm int}\, S_i}$.
   \item If $i\ne j$ then ${\rm int}\,S_i\cap {\rm int}\,S_j = \emptyset$.
   \item $f(S_i)$ is a union of sets $S_j$.
   \end{enumerate}
   Ruelle \cite[\S 7.29]{ruelle}
   proves that for expanding maps there are Markov partitions of arbitrarily small diameter.
   Let $\P$ be a Markov partition with $\diam\P<e_0$. The elements of the partition
   $$
   \P^{(n)}:=\bigvee_{i=0}^{n-1}T^{-i}\P 
   = \Big\{ \bigcap\limits_{i=0}^{n-1}A_i\;\Big|\; A_i\in T^{-i}\P\;\Big\}
   $$
   have diameter smaller than $\la^{n-1} e_0$ and contain an open set.
   Then the partition $\P$ is generating because the $\si$-algebra
     $$
   \P^\infty = \sigma\big(\cup_n \P^{(n)}\big)={\mathcal Borel}(X).
   $$
   contains all the open sets.\footnote{The star of a point $x$ in $\P^{(n)}$,
   $S(x)=\cup\{A\in\P^{(n)}\,|\, x\in A\,\}$, contains at most
   $\# \P$ elements, has diameter $\le 2\la^{n-1} e_0$ and contains a neighborhood 
   of the point $x$. Therefore any open set in $X$ is a union of (countably many) elements
   of $\cup_n \P^{(n)}$. }
    Therefore (cf. Walters~\cite[Thm.~4.18]{Walters})
    for every invariant measure $\nu\in\cM(T)$,
   $$
   h(\nu) = \inf_k \frac 1k \sum_{A\in\P^{(k)}}-\nu(A)\,\log\nu(A).
   $$
      From the definition of topological entropy using covers (cf. Walters~\cite[\S 7.1]{Walters}) we have that
   $$
   \lim_{k\ge 1}\frac 1k \,\log\#\, \P^{(k)} \le h_{\rm top}(T).
   $$
   Choose $N_\ga$ large enough such that for all $n\ge N_\ga$
   \begin{gather}
   \frac {2+\log\#\P}{m_n} +\frac{\log n}{m_n} +\frac\ga{m_n}\log\#\P^{(m_n)} < 2\ga\, h_{\rm top}(T).
   \label{20}
   \end{gather}
   
     Let $\nu\in\cM(T)$ and suppose that
   \begin{equation}\label{lega}
   \nu(\{x\in X\;|\; d(x,L_n)\ge \th^{m_n}\})\le \ga
   \end{equation}
   for some $n\ge N_\ga$.
   We will show that necessarily $h(\nu)<2\ga\,h_{\rm top}(\nu)$.
  
   Let
   $$
   W_n:= \{\, A\in \P^{(m_n)} \;|\; d(x,L_n)<\th^{m_n} \quad \text{\rm for some } x\in A\,\}.
   $$
   From \eqref{lega},
   $$
   \tga_n :=\sum_{A\in\P^{(m_n)}\setminus W_n}\nu(A) \le \ga.
   $$
  Using lemma~\ref{A.1} we have that
  \begin{align}
  h(\nu) &\le \frac 1{m_n}\sum_{A\in W_n}-\nu(A)\,\log\nu(A)
  +\frac 1{m_n}\sum_{A\in\P^{(m_n)}\setminus W_n}-\nu(A)\,\log\nu(A)
  \notag \\
  &\le \frac 1{m_n}\big(1+(1-\tga_n)\log\# W_n\big)
  +\frac 1{m_n}\big( 1+\ga\,\log\# \P^{(m_n)}\big).
  \label{hnu}
  \end{align}
  Let $g$ be a branch of the inverse of $T^{m_n}$. If $x,y$ are 
  in the domain of $g$, we have that
 $$
  d(g(x),g(y)) \ge {\Lip(T)^{-m_n}} d\big(T^{m_n}(g(x)),T^{m_n}(g(y))\big)
  \ge \Lip(T)^{-m_n} d(x,y).
  $$
      Using \eqref{theta},
  observe that since $\th^{m_n}<e_0\,\Lip(T)^{-m_n}$ for any $y\in L_n$ 
  there is a branch $g$ of the inverse of $T^{m_n}$ such that the ball
  $$
  B(y,\th^{m_n}) \subseteq g\big(B(T^{m_n}y,e_0)\big).
  $$
  Since $\P$ is a Markov partition with $\diam \P<e_0$, 
  $$
  \P^{(m_n)}=\{\, g(A)\;|\; A\in\P,\quad g\text{ is branch of $T^{-m_n}$}\,\}.
  $$
  Therefore the ball $B(y, \th^{m_n})$ intersects 
  at most $\# \P$ elements of $\P^{(m_n)}$ because
  by applying $T^{m_n}$
  $$
  \#\{\,B\in\P^{(m_n)}\;|\; B\cap B(y,\th^{m_n})\ne \emptyset\}
  \le 
  \#\{\, A\in\P\;|\; A\cap B(T^{m_n}y,e_0)\ne \emptyset\,\}
  \le \# \P.
  $$
  Since $L_n$ has at most $n$ elements, $\#W_n\le n\,\#\P$.
  Thus from \eqref{hnu} and  \eqref{20} we have that
  \begin{align*}
  h(\nu) &\le 
  \frac 1{m_n}\big(1+(1-\tga_n)\log n \, \# \P\big)
  +\frac 1{m_n}\big( 1+\ga\,\log\#\P^{(m_n)}\big).
  \\
  &\le \frac {2+\log\#\P}{m_n} +\frac{\log n}{m_n} + \frac\ga{m_n}{\log\#\P^{(m_n)}}
  < 2\ga\,h_{\rm top}(T).
   \end{align*}
   This proves the claim.

   \noindent{\it Step 5.} We now complete the proof. Define a sequence of functions 
   $f_n\in\Lip(X,\re)$ by
   \begin{equation}\label{fn}
   f_n(x)=f(x)-\be\,d(x,L_n),
   \end{equation}
   where $L_n=\supp\,\mu_n$ as above. From the definition of $\be$ in~\eqref{Ubeta}
   we have that
   $f_n\in \cU$ for each $n\ge 1$.  From Claim~\ref{CTM} in step 3 we have that
   \begin{equation*}\label{omn}
   \int d(x,K) \; d\mu_n = o(\th^{m_n}),
   \end{equation*}
   and from Claim~\ref{CSA4} in step 4 it follows that when $n$ is sufficiently large,
   $$
   \int d(x,L_n)\;d\nu \ge \th^{m_n}\,\nu\big(\{x\in X\;|\;d(x,L_n)\ge \th^{m_n}\}\big)
   \ge \ga\, \th^{m_n}
      $$
   for all $\nu\in \cM(T)$ such that $h(\nu)\ge 2\ga\,h_{\rm top}(T)$. 
   
   We may therefore choose $n$ such that $\be \int d(x,L_n)\,d\nu > C \int d(x,K)\,d\mu_n$
   for every $\nu\in \cM(T)$ such that $h(\nu)\ge 2\ga\, h_{\rm top}(T)$.
   It follows that for every such measure $\nu$
   \begin{align*}
   \int f_n\;d\nu 
   &= \int f\; d\nu -\be \int d(x,L_n)\;d\nu \\
   &<-\a(f)-C\int d(x,K)\;d\mu_n \\
   &\le \int f \; d\mu_n 
   =\int f_n\;d\mu_n
   \le -\a(f_n),
   \end{align*}
   where we have applied \eqref{Ae3} and \eqref{fn}.
   We have shown that if $\nu\in \cM(T)$ and $h(\nu)\ge 2\ga\, h_{\rm top}(T)$,
   then $\nu\notin \cM_{\rm max}(f_n)$, and therefore  $f_n\in\cE_\ga\cap U$.
   We conclude that $\cE_\ga$ is dense in $\Lip(X,\re)$ and the theorem is proved.
   
   \qed

%      \bibliographystyle{amsplain}
%      \bibliography{biblio}

  \def\cprime{$'$} \def\cprime{$'$} \def\cprime{$'$} \def\cprime{$'$}
\providecommand{\bysame}{\leavevmode\hbox to3em{\hrulefill}\thinspace}
\providecommand{\MR}{\relax\ifhmode\unskip\space\fi MR }
% \MRhref is called by the amsart/book/proc definition of \MR.
\providecommand{\MRhref}[2]{%
  \href{http://www.ams.org/mathscinet-getitem?mr=#1}{#2}
}
\providecommand{\href}[2]{#2}

\end{document}